\documentclass[11pt]{amsart}
\usepackage{amsfonts, amsmath, amssymb, amscd, amsthm, graphicx,enumerate}

\usepackage{epsfig, xcolor}
\usepackage{hyperref} 
\hypersetup{linktocpage}

\hypersetup{colorlinks,
    linkcolor={red!50!black},
    citecolor={blue!80!black},
    urlcolor={blue!80!black}}
\usepackage{float}

\usepackage[capitalise,noabbrev]{cleveref}

\hoffset -1.35cm \voffset -1.3cm \textwidth=6in \textheight=8in
\tolerance=9000 \emergencystretch=5pt \vfuzz=2pt
\parskip=2mm
%


\newtheorem{thm}{Theorem}[section]
\newtheorem{cor}[thm]{Corollary}
\newtheorem{lem}[thm]{Lemma}
\newtheorem{prop}[thm]{Proposition}

\newtheorem{ques}[thm]{Question}

\newtheorem{claim}[thm]{Claim}

\theoremstyle{definition}
\newtheorem{defn}[thm]{Definition}

\theoremstyle{remark}
\newtheorem{rem}[thm]{Remark}
\newtheorem{ex}[thm]{Example}

\newfont{\eufm}{eufm10}


\newcommand{\Prob}{\operatorname{Prob}}

\newcommand{\e}{\varepsilon }
\renewcommand{\phi}{\varphi}

\newcommand{\N}{\mathbb N}
\newcommand{\Z}{\mathbb Z}
\newcommand{\R}{\mathbb R}
\renewcommand{\ll }{\langle\hspace{-.7mm}\langle }
\newcommand{\rr }{\rangle\hspace{-.7mm}\rangle }

\renewcommand{\d }{{\rm d} }

\newcommand{\cone}{\mathrm{Cone}}
\def\mc {\mathcal}

\newcommand{\NN}{\mathbb{N}}
\newcommand{\ZZ}{\mathbb{Z}}

\renewcommand{\d}{{\rm d}}

\renewcommand{\d}{{\rm d}}

\newcommand{\Sub}{{\rm Sub}}

\usepackage{mathtools}

\newcounter{desccount}

\newcommand{\descref}[1]{\hyperref[#1]{#1}}

\newcommand{\apt}{apt} 

\begin{document}

\title{Subgroup mixing and random walks in groups acting on hyperbolic spaces}
\author{M. Hull}
\author {A. Minasyan}
\author{D. Osin}
\thanks{The work of the third author has been supported by the NSF grants DMS-1612473, DMS-1853989, DMS-2405032.}

\address[Michael Hull]{Department of Mathematics \& Statistics,
University of North Carolina at Greensboro, 
116 Petty Building,
Greensboro, NC 27402, U.S.A.}
\email{mbhull@uncg.edu}
\address[Ashot Minasyan]{CGTA, School of Mathematical Sciences,
University of Southampton, Highfield, Southampton, SO17~1BJ, United Kingdom.}
\email{aminasyan@gmail.com}
\address[Denis Osin]{Department of Mathematics, Vanderbilt University, Nashville, TN 37240, U.S.A.}
\email{denis.osin@gmail.com}

\begin{abstract}
We study the topological dynamics of the action of an acylindrically hyperbolic group on the space of its infinite index convex cocompact subgroups by conjugation. We show that, for any suitable probability measure $\mu$, random walks with respect to $\mu$ will produce elements with strong mixing properties for this action asymptotically almost surely. In particular, when the group has no finite normal subgroups this implies that the action is highly topologically transitive. Along the way, we prove technical results about convex cocompact subgroups which allow us to extend some results on random walks of Abbott and the first author.
\end{abstract}

\maketitle

\section{Introduction}
Throughout this paper, all groups are assumed to be  countable and discrete. For a group $G$, let $\Sub(G)$ denote the set of subgroups of $G$. The product topology on $2^G$ induces the structure of a compact Polish space on $\Sub(G)$ and the rule \[H\mapsto gHg^{-1},\;\;\; \forall \,g\in G,\;\; \forall\, H\leqslant G,\] defines an action of the group $G$ on $\Sub(G)$ by homeomorphisms. Our main goal is to study the mixing properties of this action for groups acting on hyperbolic spaces. Our work is partially motivated by model-theoretic applications discussed in the forthcoming paper \cite{HMO}. We begin by recalling the necessary definitions from topological dynamics.

\subsection{Topological transitivity and \texorpdfstring{$\mu$-mixing}{mu-mixing}} 
Let $G$ be a group acting continuously on a topological space $X$. For any subsets $U$ and $V$ of $X$, we define 
\begin{equation}\label{Eq:TT}
N(U,V)=\{ g\in G\mid g(U)\cap V\ne \emptyset\}.
\end{equation}
The action $G\curvearrowright X$ is said to be \emph{topologically transitive} if $N(U,V)\ne \emptyset$  for any non-empty open sets $U,V\subseteq X$.  


In this paper, we introduce a class of actions satisfying a stronger condition. More precisely, let $\Prob(G)$ denote the set of all (countably additive) probability measures on a group $G$. For $\mu\in \Prob(G)$, we denote by $\mu^{\ast n}$ the $n$-fold convolution of $\mu$ with itself. 

\begin{defn}\label{def:mu-mixing}
We say that an action of a group $G$ on a topological space $X$ is \emph{topologically $\mu$-mixing} for some $\mu\in \Prob(G)$ if $$\lim\limits_{n\to \infty} \mu^{\ast n}(N(U,V))=1$$ for any non-empty open subsets $U$ and $V$ of $X$. 
\end{defn}

In other words, the action of a group $G$ on a topological space $X$ is topologically $\mu$-mixing if the $n$-step random walk on $G$ generated by $\mu$ and starting at $1$ produces an element of $N(U,V)$ almost surely as $n\to \infty$. Clearly, every such action is topologically transitive. Our interest in topological $\mu$-mixing is partly driven by the observation that it implies a much stronger condition. Recall that an action of a group $G$  on a topological space $X$ is \emph{highly topologically transitive} if the diagonal action of $G$ on $X^k$ is topologically transitive for all $k\in \NN$. The following elementary fact is proved in Section~\ref{subsec:top_dyn}. 

\begin{prop}\label{Prop:HT} 
Let $G$ be a group. For any $\mu \in \Prob(G)$,  every topologically $\mu$-mixing action of $G$ on a topological space is highly topologically transitive.
\end{prop}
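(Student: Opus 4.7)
The plan is to unwind the definition of high topological transitivity and reduce it to applying $\mu$-mixing coordinate-by-coordinate. By definition, I must show that for every $k \in \NN$ and every pair of non-empty open sets $\mathcal U, \mathcal V \subseteq X^k$, there exists $g \in G$ with $g(\mathcal U) \cap \mathcal V \ne \emptyset$ for the diagonal action. Since basic open sets in the product topology on $X^k$ are products of open sets in $X$, and since every non-empty open set contains such a basic open set, it suffices to handle the case $\mathcal U = U_1 \times \cdots \times U_k$ and $\mathcal V = V_1 \times \cdots \times V_k$ with each $U_i, V_i$ a non-empty open subset of $X$.

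The next step is the key reduction: for the diagonal action, $g(\mathcal U) \cap \mathcal V \ne \emptyset$ holds if and only if $g(U_i) \cap V_i \ne \emptyset$ for every $i=1,\dots,k$. (The forward direction is immediate from projecting, and the backward direction uses that $U_i$ and $V_i$ are non-empty open, so there is a point of $\mathcal U$ whose image under $g$ lies in $\mathcal V$ — one checks this using the continuity of the action and choosing the coordinates independently. Actually, more simply: $g(U_i) \cap V_i \ne \emptyset$ means there exists $u_i \in U_i$ with $gu_i \in V_i$; then $(u_1,\dots,u_k) \in \mathcal U$ and its image lies in $\mathcal V$.) Thus the condition we need is $\bigcap_{i=1}^k N(U_i, V_i) \ne \emptyset$.

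Now I would invoke the hypothesis of topological $\mu$-mixing: for each $i$, $\mu^{\ast n}(N(U_i, V_i)) \to 1$ as $n \to \infty$. A union bound then gives
\[
\mu^{\ast n}\!\left(\bigcap_{i=1}^k N(U_i, V_i)\right) \;\ge\; 1 - \sum_{i=1}^k \bigl(1 - \mu^{\ast n}(N(U_i, V_i))\bigr),
\]
and the right-hand side tends to $1$. In particular, for all sufficiently large $n$ the intersection carries positive $\mu^{\ast n}$-mass, hence is non-empty, yielding the desired $g$.

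I do not anticipate a real obstacle here; the argument is essentially a soft-analysis observation. The one point that deserves care is the reduction to products of non-empty open sets in the second paragraph, but this is standard for the product topology. The conceptual content is that $\mu$-mixing promotes the mere non-emptiness of each $N(U_i,V_i)$ to a quantitative ``large-measure'' statement, and large-measure sets are stable under finite intersections — which is precisely what high topological transitivity demands but what plain topological transitivity of $G \curvearrowright X$ cannot directly provide.
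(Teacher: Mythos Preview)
Your argument is correct and essentially identical to the paper's: both apply the union bound to show $\mu^{\ast n}\bigl(\bigcap_{i=1}^k N(U_i,V_i)\bigr) \to 1$, hence the intersection is non-empty. The only cosmetic difference is that you spell out the reduction from open sets in $X^k$ to products $U_1\times\cdots\times U_k$, whereas the paper works directly with the equivalent $k$-transitivity formulation in terms of the sets $N(U_i,V_i)$.
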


Clearly, a highly topologically transitive action of a group $G$ on a topological space $X$ does not need to be $\mu$-mixing in general. Indeed, if $X$ contains two disjoint non-empty open sets, no $G$-action on $X$ is $\mu$-mixing with respect to the Dirac measure $\mu=\delta_1$ concentrated at $1$. Furthermore, in Proposition~\ref{prop:high_top_act_not_mixing}, we give an example of a group $G$, generated by a finite symmetric set $A$, and a highly topologically transitive action of $G$ on a Cantor space that is not $\mu$-mixing for the uniform measure $\mu$ supported on $A$ (as usual, we say that a subset $A$ of a group $G$ is \textit{symmetric} if $A^{-1}=A$.) Thus, $\mu $-mixing is a stronger property than high topological transitivity even for well-behaved measures. 

Generally, the conjugation action of a group $G$ on  $\Sub (G)$ is not topologically transitive. One obstacle is that if $G$ is finitely generated, then every finite index subgroup is an isolated point of $\Sub (G)$. However, even if we restrict to the subspace 
\[
\Sub_\infty (G)=\{ H\leqslant G \mid |G:H|=\infty\},
\] 
topological transitivity remains a rare phenomenon. For example, we show that the action of an infinite virtually solvable group $G$ on $\Sub_{\infty} (G)$ by conjugation is topologically transitive if and only if $G\cong \ZZ$ (see Corollary~\ref{cor:solv}). 

On the other hand, it is not difficult to prove that the action of the free group $F_n$ of rank $n\ge 1$ on $\Sub_{\infty}(F_n)$ is topologically transitive.  In this paper, we generalize and strengthen the latter observation in two directions. First, we replace the free group with a wide class of groups acting on hyperbolic spaces. Second, we prove that the action of such groups on a natural analogue of $\Sub_{\infty}(F_n)$ is topologically $\mu$-mixing (and, in particular, highly topologically transitive).  To formulate our main result we need to review additional terminology.

\subsection{Groups acting on hyperbolic spaces} All group actions on hyperbolic metric spaces considered in this paper are assumed to be by isometries, unless specified otherwise. Let $G$ be a group acting on a hyperbolic geodesic metric space $S$.

An element $g\in G$ is called \emph{loxodromic} if the map $\Z\to S$, $n\mapsto g^n(s)$, is a quasi-isometric embedding. We will say that the action of $G$ on $S$ is  \emph{partially WPD} if $G$ contains at least one loxodromic WPD element.
Here ``WPD" stands for the weak proper discontinuity condition introduced by Bestvina and Fujiwara in \cite{BF} (see Section~\ref{sec:back} for a definition). Note that a non-(virtually cyclic) group $G$ admits a partially WPD action on a hyperbolic metric space if and only if $G$ is acylindrically hyperbolic, see \cite{Osi13}.
In particular, every acylindrical action on a hyperbolic space with unbounded orbits will be partially WPD. Let us list some examples of partially WPD group actions on hyperbolic spaces. These examples can be found, among other places, in \cite[Section 2.3]{Osi18}.

\begin{enumerate}
\item[(a)] The natural action of an infinite hyperbolic group $G$ on its Cayley graph with respect to a finite generating set. 
\item[(b)]  The action of any amalgamated free product $G=A\ast_C B$ (respectively, an $HNN$-extension $G=A\ast_{C^t=D}$) on the associated Bass-Serre tree provided $A\ne C\ne B$ (respectively, $C\ne A\ne D$) and there exists $g\in G$ such that $|g^{-1}Cg \cap C|<\infty$).
This class of examples includes the action of the fundamental group of a non-geometric closed $3$-manifold on the Bass-Serre tree associated with the JSJ decomposition.

\item[(c)] The action of the mapping class group of a closed surface of genus $g\ge 2$ on the curve complex. 

\item[(d)] The action of $Out(F_n)$ on the free factor complex. 

\item[(e)] The action of a right angled Artin group on its extension graph.
\end{enumerate}

Every non-(virtually cyclic) group $G$ admitting a partially WPD action on a hyperbolic space has a maximal finite normal subgroup $E(G)$ called the \textit{finite radical} of $G$ (see \cite[Theorems~6.14 and 6.8]{DGO}). 

A subgroup $H\leqslant G$ is said to be \textit{convex cocompact} (with respect to an action $G \curvearrowright S$) if $H$ acts on $S$ properly with quasi-convex orbits. In the case when $G$ is a hyperbolic group acting properly and coboundedly on $S$, a subgroup of $G$ is convex-cocompact if and only if it is quasi-convex in $G$.
By $\Sub_{\infty}^{cc}(G\curvearrowright S)$ we denote the set of all convex cocompact subgroups of infinite index in $G$. Note that this set is non-empty if $G$ is not virtually cyclic and contains at least one loxodromic element $g$, as in this case $\langle g \rangle \in \Sub_{\infty}^{cc}(G\curvearrowright S)$.
It is easy to see that a conjugate of a convex cocompact subgroup is again convex cocompact (see Lemma~\ref{lem:cong_of_cc_is_cc}), thus $G$ acts on  $\Sub_{\infty}^{cc}(G\curvearrowright S)$ by conjugation. 

We are now ready to state our first main result. As usual, for a set $A$ in a topological space, we denote by $\overline{A}$ its closure.

\begin{thm}\label{Thm:main1}
Let $G$ be a non-virtually-cyclic group acting on a hyperbolic space $S$. Suppose that the action is partially WPD and $G$ has trivial finite radical. Then the action of $G$ on $\overline{\Sub_{\infty}^{cc}(G\curvearrowright S)}$ by conjugation is topologically $\mu$-mixing for every probability measure $\mu\in \Prob(G)$ whose support is finite, symmetric and generates $G$.
\end{thm}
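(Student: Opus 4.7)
The plan is to unpack the definition of topological $\mu$-mixing directly. The topology on $\Sub(G)$ has a basis consisting of sets of the form $U_{F_+,F_-} = \{H\leqslant G \mid F_+\subseteq H,\, F_-\cap H=\emptyset\}$ for finite subsets $F_\pm\subseteq G$, and since $\Sub_\infty^{cc}(G\curvearrowright S)$ is dense in its closure, it suffices to fix $H_1, H_2\in \Sub_\infty^{cc}(G\curvearrowright S)$ and finite sets $F_+\subseteq H_1$, $F_-\subseteq G\setminus H_1$, $F_+'\subseteq H_2$, $F_-'\subseteq G\setminus H_2$, put $U = U_{F_+,F_-}$, $V = U_{F_+', F_-'}$, and verify $\mu^{\ast n}(N(U,V))\to 1$. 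The candidate witness for $g\in N(U,V)$ will be $H := \langle H_1,\, g^{-1}H_2g\rangle$: the positive conditions $F_+\subseteq H_1 \subseteq H$ and $F_+'\subseteq H_2 \subseteq gHg^{-1}= \langle gH_1g^{-1}, H_2\rangle$ are automatic, so the work lies entirely in verifying the negative conditions $F_-\cap H = \emptyset$ and $F_-'\cap gHg^{-1} = \emptyset$.

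The main technical ingredient is a combination (Ping-Pong) theorem for convex cocompact subgroups: whenever $g\in G$ is a loxodromic WPD element whose translation length on $S$ is sufficiently large and whose attracting/repelling fixed points in $\partial S$ are sufficiently far from the limit sets of $H_1$ and $H_2$, the subgroup $H$ is convex cocompact of infinite index in $G$ and canonically isomorphic to the free product $H_1\ast g^{-1}H_2g$; moreover, every element of $H\setminus(H_1\cup g^{-1}H_2g)$ moves a fixed basepoint $o\in S$ by at least some quantity $L(g)$ that tends to infinity with the translation length of $g$. This is exactly the sort of statement promised by the technical results on convex cocompact subgroups advertised in the abstract, and I would prove it by taking Ping-Pong neighborhoods of the limit sets $\Lambda(H_1)$ and $\Lambda(g^{-1}H_2g)$ in $\partial S$ and controlling alternating products geometrically.

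Given the combination theorem, $F_-\cap H = \emptyset$ reduces to two conditions on $g$: that $gfg^{-1}\notin H_2$ for every $f\in F_-$ (ensuring $f\notin g^{-1}H_2g$), and that $L(g)>\max_{f\in F_-}\d_S(o, f(o))$ (ensuring $f$ cannot appear in a non-trivial alternating product). An analogous argument, using $F_-'\cap H_2=\emptyset$ together with the additional condition $g^{-1}f'g\notin H_1$ for every $f'\in F_-'$, handles $F_-'\cap gHg^{-1}=\emptyset$. Thus $N(U,V)$ contains the event that $g$ is loxodromic WPD with large translation length, has boundary fixed points in a prescribed open region of $\partial S$, and satisfies $gfg^{-1}\notin H_2$ and $g^{-1}f'g\notin H_1$ for all $f\in F_-$, $f'\in F_-'$. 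Because $\supp\mu$ is finite, symmetric, and generates $G$, the random-walk theorems for partially WPD actions refined by Abbott and the first author show that this event has $\mu^{\ast n}$-probability tending to $1$: translation lengths grow linearly in $n$, the hitting measure on $\partial S$ is non-atomic and charges no countable set, and membership of $gfg^{-1}$ in the infinite-index subgroup $H_2$ fails asymptotically almost surely. The hypothesis $E(G)=1$ enters here to rule out a finite normal subgroup that would otherwise force unwanted inclusions $F_-\subseteq H$. The main obstacle, as I see it, is the combination theorem: the quasiconvex-subgroup analogue for hyperbolic groups is classical, but promoting it to convex cocompact subgroups under only a partially WPD hypothesis requires a delicate analysis of limit sets and alternating products using only a single loxodromic WPD axis rather than global acylindricity.
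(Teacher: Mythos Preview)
Your high-level strategy matches the paper's exactly: pick convex cocompact $H_1\in U$, $H_2\in V$, take the witness $L=\langle H_1, g^{-1}H_2g\rangle$ for $g=w_n$, and show asymptotically almost surely that $L$ is convex cocompact of infinite index with $L\cap\mathcal F=H_1\cap\mathcal F$ and $gLg^{-1}\cap\mathcal F=H_2\cap\mathcal F$. The paper packages this as Proposition~\ref{p:tt} and deduces Theorem~\ref{Thm:CC} (of which Theorem~\ref{Thm:main1} is a special case) in a few lines.

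Where you diverge is in the technical engine. You propose to prove a combination theorem by Ping-Pong on $\partial S$ using limit sets, and then to verify the Ping-Pong hypotheses via boundary hitting measures. The paper takes a different, more robust route that avoids the boundary entirely: it works in $S$ via the notion of a loxodromic WPD element \emph{transverse} to $H_1$ and $H_2$ (Definition~\ref{def:transv_elem}), and once such an element lies in $\Gamma_\mu$, it invokes \cite[Theorem~3.2]{AH21} (stated here as Theorem~\ref{thm:rw}) to get that paths labelled by alternating words in $(H_1\cup H_2)\setminus\{1\}$ and $w_n^{\pm1}$ are uniform quasi-geodesics. This gives convex cocompactness and the ``long word'' lower bound in one stroke; in particular the awkward separate condition $gfg^{-1}\notin H_2$ you single out is absorbed automatically, since $w_n^{-1}hw_n$ is itself such an alternating word and hence moves the basepoint far. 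Your boundary approach might be made to work when $S$ is proper, but here $S$ is an arbitrary hyperbolic space, so limit sets and Ping-Pong neighbourhoods on $\partial S$ are delicate; the paper's internal-metric formulation sidesteps this.

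There is a genuine gap in your plan, however: you treat the Abbott--Hull random-walk package as a black box delivering everything you need, but \cite[Theorem~3.2]{AH21} has a \emph{hypothesis} --- the existence of a loxodromic WPD element in $\Gamma_\mu$ transverse to the given quasi-convex set --- which was only verified in \cite{AH21} in special cases. Establishing that such a transverse element exists for an \emph{arbitrary} infinite-index convex cocompact subgroup is precisely the new technical content of the present paper (Sections~\ref{sec:apt}--\ref{sec:trans}: the notion of $H$-\apt{} elements, Proposition~\ref{prop:rwpds}, and Corollary~\ref{cor:cc-trans_all_of_G}). Your outline does not identify this step, and without it the appeal to \cite{AH21} is circular.
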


This theorem is a special case of Theorem \ref{Thm:CC}, which is stated for a more general class of measures (see Definition~\ref{defn:permissible}) and for groups which are not necessarily finitely generated.

Recall that a hyperbolic group $G$ is \emph{locally quasi-convex} if every finitely generated subgroup is quasi-convex. In this case, the closure of the space of quasi-convex subgroups of $G$ coincides with $\Sub_\infty(G)$. Known examples of locally quasi-convex hyperbolic groups are free groups, surface groups, certain one-relator groups \cite{MW}, and fundamental groups of finite graphs of free groups with cyclic edge groups that do not contain any Baumslag-Solitar subgroups, e.g., amalgamated products of two free groups over a cyclic subgroup which is maximal in one of the factors \cite[Theorem~3.6]{BW}. On the other hand, Agol's virtual fibering theorem implies that closed hyperbolic $3$-manifold groups are not locally quasi-convex. Theorem \ref{Thm:main1} immediately gives the following.

\begin{cor}\label{Cor:main1}
Suppose that $G$ is a locally quasi-convex hyperbolic group that is not virtually-cyclic and has trivial finite radical. Then the action of $G$ on the space $\Sub_\infty(G)$ by conjugation is topologically $\mu$-mixing for every probability measure $\mu\in \Prob(G)$, whose support is finite, symmetric and generates $G$.
\end{cor}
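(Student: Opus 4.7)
The plan is to derive Corollary \ref{Cor:main1} directly from Theorem \ref{Thm:main1} by taking $S = \Gamma(G,X)$, the Cayley graph of $G$ with respect to some finite generating set $X$. Since $G$ is hyperbolic, $S$ is a hyperbolic geodesic metric space, and the natural left multiplication action of $G$ on $S$ is geometric, hence acylindrical. Because $G$ is not virtually cyclic, this action admits a loxodromic WPD element (example (a) in the list above), so it is partially WPD. Together with the standing hypotheses that $G$ is not virtually cyclic and has trivial finite radical, this puts us in the setting of Theorem \ref{Thm:main1}, which then tells us that the conjugation action of $G$ on $\overline{\Sub_{\infty}^{cc}(G\curvearrowright S)}$ is topologically $\mu$-mixing for every $\mu$ as in the statement.

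It remains to identify this closure with $\Sub_\infty(G)$. For the geometric action of a hyperbolic group on its Cayley graph, a subgroup is convex cocompact precisely when it is quasi-convex, as recalled in the excerpt; combining this with local quasi-convexity, $\Sub_{\infty}^{cc}(G\curvearrowright S)$ is exactly the set of finitely generated infinite-index subgroups of $G$. For the inclusion $\Sub_\infty(G)\subseteq \overline{\Sub_{\infty}^{cc}(G\curvearrowright S)}$, given $H\in \Sub_\infty(G)$ I would enumerate $H = \{h_1, h_2, \ldots\}$ and set $H_n = \la h_1, \ldots, h_n\ra$; each $H_n$ is finitely generated (hence quasi-convex) and contained in $H$ (hence of infinite index in $G$), and $H_n \to H$ in the Chabauty topology. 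For the reverse inclusion I would invoke the standard fact that in a finitely generated group $G$ every finite index subgroup is isolated in $\Sub(G)$, so a Chabauty limit of subgroups of infinite index is forced to again have infinite index.

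Essentially everything here is bookkeeping that should go through routinely, and the corollary then follows immediately. The only substantive point is the equality $\overline{\Sub_{\infty}^{cc}(G\curvearrowright S)} = \Sub_\infty(G)$, and within that the forward inclusion, where the local quasi-convexity hypothesis enters as precisely the condition needed to ensure that every element of $\Sub_\infty(G)$ can be approximated from within by quasi-convex subgroups.
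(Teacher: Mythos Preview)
Your proposal is correct and follows exactly the route the paper takes: the paper simply remarks that, for a locally quasi-convex hyperbolic group, the closure of the space of (infinite index) quasi-convex subgroups coincides with $\Sub_\infty(G)$, and then says Theorem~\ref{Thm:main1} immediately gives the corollary. You have just spelled out the details of that identification (the approximation by $H_n=\langle h_1,\dots,h_n\rangle$ and the isolation of finite index subgroups), which the paper leaves implicit.
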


For relatively hyperbolic groups, the following analogue of Theorem~\ref{Thm:main1} can be proven in much the same way. Recall that a relatively hyperbolic group $G$ is said to be \textit{elementary} if it is virtually cyclic or one of the peripheral subgroups coincides with $G$.

\begin{thm}\label{Thm:main2}
Suppose that $G$ is a non-elementary relatively hyperbolic group with trivial finite radical. Then the action of $G$ on the closure of the space of infinite index relatively quasi-convex subgroups in $\Sub(G)$ by conjugation is topologically $\mu$-mixing, for every probability measure $\mu\in \Prob(G)$ whose support is finite, symmetric and generates $G$.
\end{thm}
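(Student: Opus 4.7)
The plan is to derive \cref{Thm:main2} from \cref{Thm:main1} by applying the latter to the action of $G$ on the Groves--Manning cusped space. Let $\{H_\lambda\}_{\lambda \in \Lambda}$ denote the collection of peripheral subgroups and let $S$ be the cusped space obtained by equivariantly attaching a combinatorial horoball to every coset $gH_\lambda$ in the Cayley graph of $G$ with respect to an appropriate generating set. It is standard that $S$ is Gromov hyperbolic and that $G$ acts on $S$ isometrically and properly.

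To invoke \cref{Thm:main1}, I need to verify that the action $G \curvearrowright S$ is partially WPD. Because $G$ is non-elementary, it contains a loxodromic element for this action, namely any infinite-order element of $G$ that is not conjugate into a peripheral subgroup. Moreover, properness of the $G$-action on $S$ implies that, for any $s \in S$ and any $\e > 0$, the set $\{h \in G : \d_S(s,hs) \leqslant \e\}$ is finite, which trivially yields WPD for every loxodromic element. Combined with the hypothesis that the finite radical of $G$ is trivial, this means all hypotheses of \cref{Thm:main1} are satisfied for the action $G \curvearrowright S$.

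It remains to identify the space $\overline{\Sub_\infty^{cc}(G \curvearrowright S)}$ with the closure in $\Sub(G)$ of the set of infinite index relatively quasi-convex subgroups. By Hruska's work on relative quasi-convexity, a subgroup $H \leqslant G$ is relatively quasi-convex if and only if some (equivalently, every) orbit of $H$ in $S$ is quasi-convex. Since the ambient action of $G$ on $S$ is proper, the induced action of any subgroup is automatically proper, so ``relatively quasi-convex'' coincides with ``convex cocompact with respect to $G \curvearrowright S$'' in the sense of the definition adopted in this paper. Restricting to infinite index subgroups and taking closures in $\Sub(G)$, the two subspaces agree, and \cref{Thm:main1} delivers the required $\mu$-mixing conclusion. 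The only non-routine step in this reduction is the identification between convex cocompact subgroups and relatively quasi-convex subgroups; once it is in hand, \cref{Thm:main2} is a formal consequence of \cref{Thm:main1}.
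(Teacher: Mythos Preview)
Your reduction breaks at the identification step. In the Groves--Manning cusped space $S$, an infinite peripheral subgroup $H_\lambda$ is \emph{not} convex cocompact: geodesics between far-apart points of the horosphere $H_\lambda\times\{0\}$ descend to depth roughly $\log_2|h|_{H_\lambda}$ into the attached horoball, so the orbit $H_\lambda(1)$ fails to be quasi-convex (equivalently, the orbit map is exponentially distorted rather than a quasi-isometric embedding). Thus the class of $S$-convex cocompact subgroups is strictly smaller than the class of relatively quasi-convex subgroups. Hruska's cusped-space characterisation of relative quasi-convexity does not assert that a bare $H$-orbit is quasi-convex; one must augment the orbit by the sub-horoballs over the infinite $H$-parabolic cosets. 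Nor can this discrepancy be repaired by passing to closures: in $G=F_2\ast\Z^2$, hyperbolic relative to $\Z^2$, any subgroup containing two independent elements of $\Z^2$ already contains all of $\Z^2$, so the peripheral $\Z^2\in\Sub_\infty(G)$ does not lie in the closure of the $S$-convex cocompact subgroups. Applying \cref{Thm:main1} to $G\curvearrowright S$ therefore yields $\mu$-mixing only on a strictly smaller closed invariant subspace than the one appearing in \cref{Thm:main2}.

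The paper proceeds differently: it works with the relative Cayley graph $\Gamma(G,X\cup\mathcal H)$, where relatively quasi-convex subgroups always have quasi-convex orbits but need not act properly, and hence are not convex cocompact in general. Since \cref{prop:rwpds} is then unavailable, the substitute is \cref{prop:rel_qc-rwpd}, which uses the refined distance estimates of \cref{lem:rh-dY} to show directly that every loxodromic element is $H$-\apt{} for relatively quasi-convex $H$. After that, transverse elements exist by \cref{cor:ex_of_trans}, and the argument of \cref{p:tt} goes through essentially verbatim to give \cref{prop:relqctt} and then \cref{thm:relhyp_CC}.
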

Note that relatively quasi-convex subgroups of a relatively hyperbolic group $G$ need not always be finitely generated, even when $G$ itself is finitely generated.

Using relative hyperbolicity of finitely generated groups with infinitely many ends, we obtain the following.

\begin{cor}\label{Cor:main2}
Let $G$ be a finitely generated group with infinitely many ends that has no non-trivial finite normal subgroups. Then the action of $G$ on the space $\Sub_\infty(G)$ is topologically $\mu$-mixing for every  measure $\mu\in \Prob(G)$, whose support is finite, symmetric and generates $G$.
\end{cor}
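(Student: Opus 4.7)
The plan is to deduce Corollary~\ref{Cor:main2} from Theorem~\ref{Thm:main2} by realizing $G$ as a non-elementary relatively hyperbolic group and identifying the closure appearing in that theorem with $\Sub_\infty(G)$. First, I would invoke Stallings' theorem on ends together with Dunwoody's accessibility to present the finitely generated group $G$ with infinitely many ends as the fundamental group of a finite graph of groups with finite edge groups and vertex groups each either finite or one-ended. Let $\mathcal{P}$ be the (finite) collection of infinite vertex groups; by a standard construction, $G$ is hyperbolic relative to $\mathcal{P}$, with the associated Bass--Serre tree $T$ serving as a hyperbolic model. This relatively hyperbolic structure is non-elementary: $G$ is not virtually cyclic (virtually cyclic groups have at most two ends) and no peripheral subgroup coincides with $G$ (each has at most one end). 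Combined with the hypothesis that $G$ has trivial finite radical, this verifies the hypotheses of Theorem~\ref{Thm:main2}, giving topological $\mu$-mixing of the conjugation action on the closure $X$ in $\Sub(G)$ of the set of infinite index relatively quasi-convex subgroups of $G$.

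The remaining task is to prove $X = \Sub_\infty(G)$. For $X \subseteq \Sub_\infty(G)$, I would show that $\Sub_\infty(G)$ is closed in $\Sub(G)$: if $H_n \to H$ in the Chabauty topology and $H$ had finite index in $G$, then the normal core $K$ of $H$ would be a finitely generated finite-index subgroup of the finitely generated group $G$, and a finite generating set for $K$ would eventually lie in $H_n$, forcing $|G:H_n| \leq |G:K| < \infty$. For the reverse inclusion, given $H \in \Sub_\infty(G)$ with enumeration $h_1, h_2, \dots$, set $H_n := \langle h_1, \dots, h_n\rangle$; then $H_n \to H$ in the Chabauty topology and each $H_n$ has infinite index (being contained in $H$), so it suffices to show that each finitely generated subgroup $H_n$ is relatively quasi-convex.

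For this, I would use that a subgroup of $G$ is relatively quasi-convex if and only if its orbits in the hyperbolic model $T$ are quasi-convex (via the quasi-isometry between $T$ and the coned-off Cayley graph of $(G,\mathcal{P})$). Since edge stabilizers of $G \curvearrowright T$ are finite, the restriction $H_n \curvearrowright T$ likewise has finite edge stabilizers; by a classical consequence of Bass--Serre theory for finitely generated groups acting on trees with finite edge stabilizers, $H_n$ acts cocompactly on its minimal invariant subtree $T_{H_n} \subseteq T$. Every $H_n$-orbit therefore lies within bounded Hausdorff distance of $T_{H_n}$ and is quasi-convex in $T$. The main anticipated obstacle is this last step: reconciling the tree-theoretic picture with one of the (a priori inequivalent) definitions of relative quasi-convexity from the literature, and invoking the correct form of cocompactness for finitely generated subgroups of tree-acting groups. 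Once these technical points are handled, Theorem~\ref{Thm:main2} completes the proof.
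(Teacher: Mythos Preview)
Your approach is essentially the same as the paper's: reduce to Theorem~\ref{Thm:main2} by showing $G$ is non-elementary relatively hyperbolic and locally relatively quasi-convex, then identify the closure with $\Sub_\infty(G)$. Two minor points are worth noting. First, Dunwoody accessibility is unnecessary: the paper simply takes a single Stallings splitting $A*_C B$ or $A*_{C^t=D}$ over finite $C$, and the resulting Bass--Serre tree is already a fine hyperbolic graph, so $G$ is hyperbolic relative to $\{A,B\}$ (or $\{A\}$) regardless of how many ends the vertex groups have. Second, your intermediate claim that relative quasi-convexity is equivalent to having quasi-convex orbits in $T$ is not correct as stated (relative quasi-convexity is a finer condition, cf.\ Definition~\ref{def:rel_qc_sbgp}), though your actual argument---that a finitely generated $H_n$ acts cocompactly on its minimal invariant (hence convex) subtree---is exactly right. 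The paper resolves your ``anticipated obstacle'' by invoking Mart\'{\i}nez-Pedroza--Wise \cite{MPW}, who show directly that cocompact action on a convex subgraph of the fine graph implies relative quasi-convexity; this bypasses the orbit characterization entirely. Your explicit verification that $\Sub_\infty(G)$ is closed and that every infinite-index subgroup is a Chabauty limit of its finitely generated subgroups is left implicit in the paper.
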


More generally, the statement of Corollary~\ref{Cor:main2} applies to all non-elementary finitely generated relatively hyperbolic groups that are \emph{locally quasi-convex} (i.e., every finitely generated subgroup is relatively quasiconvex). Examples of such groups include limit groups \cite{Dah}, and many fundamental groups of finite graphs of locally quasi-convex relatively hyperbolic groups with Noetherian edge groups  (e.g., fundamental groups of graphs of free groups with cyclic edge groups, where at least one vertex is non-abelian) \cite{BW}.

Two remarks are in order. First, we note that the assumption that the finite radical of $G$ is trivial cannot be omitted in both theorems. Second, the local quasi-convexity assumption cannot be omitted in Corollary \ref{Cor:main1} (see Example \ref{Ex:Rips} for details).

Combining Theorems \ref{Thm:main1}, \ref{Thm:main2} and Corollaries \ref{Cor:main1}, \ref{Cor:main2}  with Proposition \ref{Prop:HT}, we obtain that the corresponding actions are highly topologically transitive. At the final stage of our work, we learned that this result was independently obtained by Azuelos and Gaboriau \cite{AG} in certain particular cases including hyperbolic groups and groups with infinitely many ends. Their proof is different from ours and does not seem to imply the stronger topological $\mu$-mixing property. 

\subsection{Random walks}

Our proofs of Theorems \ref{Thm:main1} and \ref{Thm:main2} make use of the results obtained in \cite{AH21}, where Abbott and the first author studied the interactions between random walks and a fixed convex cocompact subgroup $H$ of a group $G$ with a partially WPD action on a hyperbolic space. To apply these results, we need to verify that $H$ satisfies an additional technical hypothesis, namely the existence of a \emph{transverse element} (see Definition \ref{def:transv_elem}). In \cite{AH21}, this additional condition was verified in many particular cases of interest. In this paper, we prove the existence of transverse elements in general settings. This is crucial for the proofs of our main theorems and also allows us to strengthen the main result of \cite{AH21} as follows (see Section~\ref{sec:rw_htt} for details).

\begin{thm}\label{thm:intro-ccrw}
Let $G$ be a non-(virtually-cyclic) group admitting a partially WPD action on a hyperbolic space $S$. Suppose that $G$ has trivial finite radical and let $H$ be a convex cocompact subgroup of $G$. Then for any probability measure $\mu\in \Prob(G)$, whose support is finite, symmetric, and generates $G$, we have  
\[
\lim\limits_{n\to \infty} \mu^{\ast n} \left(\left\{  g\in G\; \left|\; 
\begin{array}{c}
    g {\rm \; is \; loxodromic,}\\
  \langle H, g\rangle\cong H\ast \langle g\rangle, \; {\rm and}\\
  \langle H, g \rangle\; {\rm is\;  convex\; cocompact}
\end{array} \right.\right\}\right)=1.
\]
\end{thm}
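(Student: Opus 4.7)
The plan is to deduce Theorem \ref{thm:intro-ccrw} from the main random-walk result of \cite{AH21}, which establishes exactly the displayed conclusion under the additional hypothesis that the convex cocompact subgroup $H$ admits a \emph{transverse element} in the sense of Definition \ref{def:transv_elem}. Thus the entire argument reduces to proving the existence of such a transverse element for an arbitrary convex cocompact subgroup $H \leqslant G$, where $G$ is non-virtually-cyclic, has trivial finite radical, and admits a partially WPD action on the hyperbolic space $S$. Verifying this general existence statement is the new technical contribution needed in this section, since \cite{AH21} had to verify it only in a list of special cases.

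To construct the required transverse element, I would appeal to the machinery of hyperbolically embedded subgroups from \cite{DGO}. Partial WPD of the action on $S$ implies that $G$ is acylindrically hyperbolic, so $G$ contains a hyperbolically embedded virtually free subgroup generated by loxodromic elements with pairwise disjoint pairs of fixed points on $\partial S$; the assumption $E(G) = \{1\}$ ensures that the maximal elementary subgroup of every loxodromic element is infinite cyclic. Convex cocompactness of $H$ guarantees that its limit set $\Lambda(H) \subset \partial S$ is a proper closed subset of the limit set of $G$, and a ping-pong argument inside a suitable hyperbolically embedded free subgroup then produces infinitely many loxodromic elements of $G$ whose attracting and repelling fixed points lie outside $\Lambda(H)$.

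From this family I would select an element $t$ satisfying the specific algebraic requirements of Definition \ref{def:transv_elem}, which concern the intersections of $\langle t \rangle$, and of certain conjugates, with $H$. Passing to a sufficiently high power of $t$ and adjusting by an appropriate element of $G$, one forces $\langle t \rangle \cap H = \{1\}$ and the other finite intersections required by the definition; here the triviality of the finite radical is precisely what converts the finite intersections produced by generic hyperbolic geometry arguments into the trivial intersections demanded by the definition. With such a transverse $t$ in hand, the conclusion of Theorem \ref{thm:intro-ccrw} is a direct application of the Abbott--Hull theorem to $H$, $t$, and $\mu$.

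The main obstacle, and the technical heart of the argument, is the general construction of the transverse element: one must simultaneously control the dynamics of $t$ on $\partial S$ (placing both of its fixed points outside $\Lambda(H)$) and its algebraic interaction with $H$ (trivial intersections with $\langle t\rangle$ and relevant conjugates). Convex cocompact subgroups of acylindrically hyperbolic groups can be quite intricate, so verifying this uniformly requires a careful analysis of quasi-convex orbits and of the relationship between $\Lambda(H)$ and the fixed-point set of the loxodromic elements of $G$. Once this dynamical/combinatorial input is established, invoking \cite{AH21} gives the displayed random-walk limit without further work.
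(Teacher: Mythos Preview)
Your overall strategy---reduce to the main theorem of \cite{AH21} by constructing a loxodromic WPD element transverse to $H$---is exactly the paper's approach, and you correctly identify the construction of the transverse element as the new technical content. However, your sketch of that construction has a genuine gap.

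The transversality condition (Definition~\ref{def:transv_elem}) demands a bound $K$ uniform over \emph{all} translates $vH(s)$, $v \in G$; equivalently (via Lemma~\ref{lem:transversal-equiv}, which itself requires the $H$-apt property), one needs $vHv^{-1}\cap\langle f\rangle=\{1\}$ for \emph{every} $v\in G$. Your proposal---place the fixed points of $t$ outside $\Lambda(H)$ by ping-pong, then ``adjust by an appropriate element'' to force some finite list of intersections to be trivial---addresses at most finitely many cosets or conjugates at a time, and gives no mechanism for the uniformity over all $v$. The phrase ``the other finite intersections required by the definition'' suggests a misreading: no finite list suffices. The real work is a finiteness result: one must first prove that for any loxodromic WPD $g$, only finitely many $H$-double cosets $HuH$ can satisfy $u^{-1}Hu\cap\langle g\rangle\ne\{1\}$. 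The paper obtains this via its notion of \emph{$H$-apt} elements (Section~\ref{sec:apt}), showing every loxodromic WPD element is $H$-apt when $H$ is convex cocompact (Proposition~\ref{prop:rwpds}), and deducing the finiteness (Lemma~\ref{lem:w_height}). Only then can one pick $a$ outside the resulting finite union $U_0^{-1}HU_0\cup E_G(g)$ and take $f=g^na$ (Proposition~\ref{prop:existence-transversal}). Your boundary/ping-pong outline does not supply this finiteness, and without it the construction does not close.

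Two smaller points: your claim that $E(G)=\{1\}$ forces $E_G(g)$ to be infinite cyclic for every loxodromic $g$ is false (e.g.\ $E_G(g)$ can be infinite dihedral even when $G$ has no nontrivial finite normal subgroup), and $E(G)=\{1\}$ plays no role in the construction of the transverse element in the paper---it enters only later, in the invocation of \cite{AH21}. Also, the theorem as stated needs $H$ to have infinite index (implicit in the paper's derivation via Corollary~\ref{cor:ccrw}); your limit-set argument tacitly assumes this when asserting $\Lambda(H)$ is proper.
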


Note that even the existence of an element $g\in G$ such that $\langle H, g\rangle\cong H\ast \langle g\rangle$ is a new result in this level of generality, though it was previously known in many particular cases \cite{Arz,M-3,ArzMin,AD,AC,AH21}. Also, applying Theorem \ref{thm:intro-ccrw} to right angled Artin groups and hierarchically hyperbolic groups allows us to confirm \cite[Conjecture 6.14]{AH21} and \cite[Conjecture 6.15]{AH21} respectively, see Section \ref{sec:rw_htt}. There is also an analogue of Theorem \ref{thm:intro-ccrw} for relatively quasi-convex subgroups of relatively hyperbolic groups, but this can already be found in \cite[Theorem 6.3]{AH21}. 

\subsection{Organization of the paper} 
In Section \ref{sec:back} we give some background on $\delta$--hyperbolic spaces, random walks and topological dynamics. In Sections \ref{sec:apt} and \ref{sec:trans} we establish the existence of transverse loxodromic elements for convex cocompact subgroups. The existence of transverse elements is used in Section \ref{sec:rw_htt} to strengthen the main results of \cite{AH21} and to prove Theorem \ref{Thm:main1}. Finally,  Theorem \ref{Thm:main2} and Corollary \ref{Cor:main2} are proved in Section~\ref{sec:relhyp}.

\section{Background}\label{sec:back}
\subsection{\texorpdfstring{$\delta$}{delta}-Hyperbolic geometry}\label{sec:hypgeo}
Let $(S, d)$ be a geodesic metric space. For $x, y\in S$, we will use $[x, y]$ to denote a geodesic from $x$ to $y$ in $S$. If $p$ is a path in $S$, we denote the initial point and the terminal point of $p$ by $p_-$ and $p_+$ respectively, and we denote the length of $p$ by $\|p\|$. A rectifiable path  $p$ is $(\lambda, c)$-quasi-geodesic, for some constants $\lambda \ge 1$ and $c \ge 0$, if for any
subpath $q$ of $p$ we have $\|q\| \le \lambda \d(q_-, q_+)+c$.

Given a metric space $(S,\d)$, a point $x \in S$, a subset $Y \subseteq S$ and a number  $C \ge 0$, by a \emph{$C$-neighborhood of $Y$ in $S$}, $\mathcal{N}_C(Y)$,
we will always mean the closed neighborhood of the form $\{s \in S \mid \exists\, y \in Y \mbox{ such that } \d(s,y)\le C\}$. We will also write $\d(x,Y) \le C$
to say that $x \in \mathcal{N}_C(Y)$, i.e., that $\d(x,y) \le C$ for some $y \in Y$.

A geodesic triangle $\Delta$ in $S$ is said to be \emph{$\delta$-slim}, for some $\delta \ge 0$, if every side of $\Delta$ is contained in the closed
$\delta$-neighborhood of the union of its two other sides. A geodesic metric space $(S,\d)$ is \emph{$\delta$-hyperbolic} if every geodesic triangle in $S$ is $\delta$-slim, for
some fixed $\delta \ge 0$ (see \cite[Chapter~1]{Mih} for equivalent definitions and more background on $\delta$-hyperbolicity). We will say that $S$ is \emph{hyperbolic} if it is $\delta$-hyperbolic, for some $\delta \ge 0$.

For the remainder of this subsection, $(S,\d)$ will be a $\delta$-hyperbolic geodesic metric space.
The next well-known property is sometimes called the stability of quasi-geodesics in a hyperbolic space (cf. {\cite[Theorem~1.7 in Chapter~III.H]{BH}}):

\begin{lem}\label{lem:stab_of_qgeod} For any $\lambda \ge 1$ and $c \ge 0$ there exists a constant $\varkappa \ge 0$ such that the following holds.
If $p$ and $q$ are paths in $S$ joining the same two points $p_-=q_-$ and $p_+=q_+$, where $p$ is geodesic and $q$ is $(\lambda,c)$-quasi-geodesic, then
$p$ is contained in the $\varkappa$-neighborhood of $q$ and $q$ is contained in the $\varkappa$-neighborhood of $p$ in $S$.
\end{lem}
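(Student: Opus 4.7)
My plan is to prove this by the classical Morse lemma argument for stability of quasi-geodesics in a $\delta$-hyperbolic space. The two Hausdorff inclusions $p\subseteq\mathcal{N}_\varkappa(q)$ and $q\subseteq\mathcal{N}_\varkappa(p)$ can be established separately using $\delta$-slim polygons, the coarse contracting property of nearest-point projection onto a geodesic, and the quasi-geodesic inequality satisfied by $q$.

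For the inclusion $p\subseteq\mathcal{N}_\varkappa(q)$, set $R:=\sup_{x\in p}\d(x,q)$, attained at some $x_0\in p$, and pick $u,v\in p$ on opposite sides of $x_0$ at distance $2R$ from it (shrinking to the endpoints of $p$ if necessary). Let $u',v'\in q$ be points with $\d(u,u')=\d(u,q)\le R$ and $\d(v,v')=\d(v,q)\le R$. The geodesic quadrilateral with sides $[u,v]\subset p$, $[u,u']$, a geodesic $[u',v']$, and $[v',v]$ is $2\delta$-slim, and the two ``short'' sides have length $\le R$; this forces $x_0$ to lie within $2\delta$ of $[u',v']$. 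The triangle inequality gives $\d(u',v')\le 6R$, so the subarc of $q$ from $u'$ to $v'$ has length at most $6\lambda R+c$ by the quasi-geodesic hypothesis. Concatenating $[u,u']$, this subarc, and $[v',v]$ yields a path from $u$ to $v$ of total length at most $(6\lambda+2)R+c$ whose image avoids the open $R$-ball about $x_0$ (the subarc does so by the definition of $R$, and the end segments do so by the triangle inequality since $\d(u,x_0)=\d(v,x_0)=2R$ and each end segment has length $\le R$). Comparing this polynomial length bound with the classical exponential lower bound on the length of any path in a $\delta$-hyperbolic space joining two points on a geodesic while avoiding a large ball about the midpoint yields $R\le R_0(\delta,\lambda,c)$.

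For the reverse inclusion $q\subseteq\mathcal{N}_\varkappa(p)$, I would combine the first inclusion with nearest-point projection. If some $y\in q$ lay at distance $>R_0+K$ from $p$ for large $K$, then following $q$ from $y$ in both directions until first re-entry into $\mathcal{N}_{R_0+1}(p)$ would produce a subarc whose endpoints project, under nearest-point projection onto $p$, to two points at distance $\le L$ for some $L=L(\delta,R_0)$, since nearest-point projection onto a geodesic in a $\delta$-hyperbolic space is coarsely contracting with constants depending only on $\delta$. The endpoints of this subarc are therefore at mutual distance $\le L + 2(R_0+1)$, while the subarc itself has length at least $K$ by construction. The $(\lambda,c)$-quasi-geodesic inequality then forces $K\le \lambda(L+2R_0+2)+c$, bounding $K$ in terms of $\delta,\lambda,c$ and yielding the desired $\varkappa$.

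The only real obstacle is constant bookkeeping, namely verifying that the final $\varkappa$ depends only on $\delta,\lambda,c$, which requires a careful but routine accounting of how each estimate propagates its inputs. The exponential-detour inequality in a $\delta$-hyperbolic space and the coarse-Lipschitz property of nearest-point projection used above are both standard, and a complete treatment of the lemma as stated appears in \cite[Chapter~III.H, Theorem~1.7]{BH}, which is the reference cited in the statement.
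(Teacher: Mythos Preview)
The paper does not prove this lemma; it merely records it as the well-known stability of quasi-geodesics and cites \cite[Chapter~III.H, Theorem~1.7]{BH}. Your sketch therefore goes beyond what the paper provides, and since you ultimately defer to that same reference, this is entirely in keeping with how the paper treats the result.

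Your exponential-detour argument for the inclusion $p\subseteq\mathcal{N}_{R_0}(q)$ is the standard one and is correct. (The intermediate observation that $x_0$ lies within $2\delta$ of $[u',v']$ is never actually used in what follows, but it is true and harmless.)

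There is, however, a gap in your sketch of the reverse inclusion. You assert that the re-entry points $a,b$ project onto $p$ to points at mutual distance at most some $L=L(\delta,R_0)$, invoking the coarse contracting property of nearest-point projection. But the contracting property bounds the diameter of the projection of a \emph{ball} disjoint from $p$; it gives no a priori bound on the projection of an arc of unknown length lying outside $\mathcal{N}_{R_0+1}(p)$ (an equidistant curve in $\mathbb{H}^2$ shows such a bound fails for general arcs). Nothing you have written rules out $\d(\pi(a),\pi(b))$ being large, so the chain of inequalities that follows does not close. The argument in \cite{BH} avoids projection here and instead feeds the first inclusion back in: every point of the segment $[\pi(a),\pi(b)]\subset p$ is within $R_0$ of $q$, but cannot be within $R_0$ of the middle subarc (which lies at distance $>R_0+1$ from $p$), hence is within $R_0$ of one of the two outer subarcs of $q$; a connectedness argument then produces a point of $p$ simultaneously $R_0$-close to both outer subarcs, so those subarcs come within $2R_0$ of each other, and the $(\lambda,c)$-inequality bounds the length of the middle subarc and thus $\d(y,p)$. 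Since you explicitly point to \cite{BH} for the complete proof, this is a minor issue rather than a fatal one.
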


Given three points $x,y,s \in S$, the \emph{Gromov product of $x$ and $y$ with respect to $s$} is defined by the formula
\begin{equation}\label{eq:Gr-prod-def}
(x\,|\,y)_s=\frac12 \left(\d(x,s)+\d(y,s)-\d(x,y)\right).
\end{equation}

The next lemma was proved in \cite[Lemma 21]{Olsh}. 
The constants involving $\delta$ are different from the constants in the original statement of  \cite[Lemma 21]{Olsh}, since we are using a
different definition of $\delta$-hyperbolicity (our definition, in terms of $\delta$-slim triangles, implies the $12\delta$-hyperbolicity of Gromov products, used in \cite{Olsh};
see \cite[Chapter~2, Propositon~2.1]{Mih}).

\begin{lem}\label{lem:piecewise_qgeod} Let $S$ be a $\delta$-hyperbolic geodesic metric space, for some $\delta \ge 0$.
Suppose that $C_0 \ge 168\delta$ and $C_1> 12(C_0+12\delta)$  are some constants.
If $x_0,x_1,\dots,x_m$ are points of $S$ satisfying $\d(x_{i-1},x_{i}) \ge C_1$ for $i=1,\dots,m$,  and $(x_{i-1}\,|\,x_{i+1})_{x_i} \le C_0$ for $i=1,\dots,m-1$, and $q$
is any geodesic segment joining $x_0$ with $x_m$, then  $\d(x_i,q) \le 2C_0$ for all $i=1,\dots,m$. 
\end{lem}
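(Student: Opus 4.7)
The plan is to argue by contradiction, using the Gromov product hypothesis together with $\delta$-slimness of triangles.

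Suppose, for contradiction, that some $x_i$ satisfies $\d(x_i, q) > 2C_0$, and pick an index $i$ maximizing $\d(x_i, q)$. Since $x_0, x_m \in q$, we have $1 \le i \le m-1$, and by maximality $\d(x_{i-1}, q), \d(x_{i+1}, q) \le \d(x_i, q)$. For the base case $m=2$ the claim is immediate from the standard $\delta$-hyperbolic fact that any geodesic $[u,v]$ comes within $(u|v)_w+4\delta$ of any third point $w$: this gives $\d(x_1, q) \le (x_0|x_2)_{x_1}+4\delta \le C_0+4\delta \le 2C_0$, the last inequality using $C_0 \ge 168\delta$.

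For $m\ge 3$ I would analyze the local geometry near the ``peak'' $x_i$. The two segments $[x_{i-1}, x_i]$ and $[x_i, x_{i+1}]$ each have length at least $C_1$ and each joins a point at distance at most $\d(x_i, q)$ from $q$ to $x_i$. Using $\delta$-slimness of the triangles formed by $x_i$, $x_{i\pm 1}$, and a nearest-point projection $y$ of $x_i$ onto $q$, together with the hypothesis $C_1 > 12(C_0+12\delta)$, I would show that the initial portions of these two segments emanating from $x_i$ each $O(\delta)$-fellow-travel the ``descending'' geodesic $[x_i, y]$ for a long distance. Consequently $[x_i, x_{i-1}]$ and $[x_i, x_{i+1}]$ share a long common initial subsegment from $x_i$, forcing the Gromov product $(x_{i-1}|x_{i+1})_{x_i}$ to exceed $C_0$ --- contradicting the hypothesis $(x_{i-1}|x_{i+1})_{x_i} \le C_0$.

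The main obstacle is making the fellow-traveling step quantitative with the correct constants. The choice $C_0 \ge 168\delta$ is what permits the various $O(\delta)$ slim-triangle errors accumulating along the descent toward $q$ to be absorbed into $C_0$, while the gap $C_1 > 12(C_0+12\delta)$ ensures the common initial subpath emanating from $x_i$ is long enough to witness a Gromov product strictly exceeding $C_0$. Once these quantitative estimates are pinned down, the contradiction closing the argument is immediate.
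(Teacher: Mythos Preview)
The paper does not prove this lemma; it is quoted from \cite[Lemma~21]{Olsh} (with the constants adjusted for the slim-triangles definition of hyperbolicity used here), so there is no in-paper argument to compare your approach against.

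Your peak-point strategy is sound in outline. At the index $i$ maximizing $D=\d(x_i,q)$, write $y,y_-,y_+$ for nearest points on $q$ to $x_i,x_{i-1},x_{i+1}$ respectively. From $\d(x_i,y_\pm)\ge D$, $\d(x_{i\pm1},y_\pm)\le D$ and $\d(x_i,x_{i\pm1})\ge C_1$ one computes $(x_{i\pm 1}\,|\,y_\pm)_{x_i}\ge C_1/2$; and since any geodesic from $x_i$ to a point of $q$ fellow-travels $[x_i,y]$ for length roughly $D$ (nearest-point projection to a geodesic), the hyperbolic four-point inequality then gives $(x_{i-1}\,|\,x_{i+1})_{x_i}\ge \min(C_1/2,D)-O(\delta)>C_0$, the desired contradiction. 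So the idea is correct.

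That said, what you have written is a plan rather than a proof: you explicitly flag the quantitative fellow-traveling step as ``the main obstacle'' and do not carry it out. Until the $O(\delta)$ errors are actually tracked, one cannot verify that this route reproduces the stated bound $\d(x_i,q)\le 2C_0$ under precisely the hypotheses $C_0\ge 168\delta$ and $C_1>12(C_0+12\delta)$; those specific constants are inherited from Olshanskii's original argument, and there is no guarantee your approach yields the same numerical thresholds. The outline is reasonable but, as submitted, incomplete.
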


For a constant $\eta\geq 0$, a subset $Y \subseteq S$ is \emph{$\eta$-quasi-convex} (or simply \emph{quasi-convex}) if, whenever $p$ is a geodesic with endpoints in $Y$, $p\subseteq \mathcal{N}_\eta(Y)$. 

Suppose, now, that $G$ is a group acting by isometries on $S$. This action is (\emph{metrically}) \emph{proper} if for every $s \in S$ and each $R >0$ the subset $\{g \in G \mid \d(s,g(s)) \le R\}$ is finite. 

We will often be interested in subgroups or subsets of $G$ which have quasi-convex orbits, that is $H\subseteq G$ such that for some $s\in S$, the orbit $H(s)$ is a quasi-convex subset of $S$. Note that this is independent of the basepoint $s$. 

\begin{lem}[{\cite[Remark~2.1]{AMS}}] \label{lem:qc_orbits}
Let $H\subseteq G$ such that $H(s)$ is $\eta$-quasi-convex in $S$. Then for any $t\in S$, $H(t)$ will be $\eta'$-quasi-convex for $\eta'=2\delta+\eta+2\d(s,t) \ge 0$.
\end{lem}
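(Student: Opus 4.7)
The plan is to use the standard thin-quadrilateral property of $\delta$-hyperbolic spaces together with the fact that $H$ acts on $S$ by isometries. Given $h_1, h_2 \in H$, I want to show that every point on a geodesic $p=[h_1(t),h_2(t)]$ lies within $\eta'$ of $H(t)$, as then $H(t)$ will be $\eta'$-quasi-convex by definition.

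The key idea is to compare $p$ with the geodesic $q=[h_1(s),h_2(s)]$ between the corresponding points in the orbit $H(s)$, connected to $p$ by the ``vertical'' geodesic segments $[h_i(t),h_i(s)]$ of length $\d(s,t)$ each (the lengths being equal because $H$ acts by isometries, so $\d(h_i(t),h_i(s))=\d(t,s)$). Recalling the standard consequence of $\delta$-slimness that in any geodesic quadrilateral each side lies in the $2\delta$-neighborhood of the union of the other three sides (obtained by subdividing into two triangles along a diagonal), I would fix an arbitrary point $x\in p$ and split into two cases.

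In the first case, $x$ lies within $2\delta$ of one of the vertical segments $[h_i(t),h_i(s)]$; then $x$ is within $2\delta+\d(s,t)$ of the endpoint $h_i(t)\in H(t)$. In the second case, $x$ lies within $2\delta$ of some point $y\in q$; since $H(s)$ is $\eta$-quasi-convex, there exists $h\in H$ with $\d(y,h(s))\le \eta$, and then $h(t)\in H(t)$ satisfies
\[
\d(x,h(t))\le \d(x,y)+\d(y,h(s))+\d(h(s),h(t))\le 2\delta+\eta+\d(s,t).
\]
Combining the two cases gives $\d(x,H(t))\le 2\delta+\eta+\d(s,t)\le \eta'$, which is what is needed.

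There is no genuine obstacle here; the only thing to be careful about is the quadrilateral bound ($2\delta$ rather than $\delta$) and the observation that since $H$ acts isometrically, the ``vertical'' sides both have length exactly $\d(s,t)$. The stated constant $\eta'=2\delta+\eta+2\d(s,t)$ is in fact slack by $\d(s,t)$ compared with the sharper bound produced by the argument, which is harmless.
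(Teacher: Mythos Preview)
Your argument is correct and is the standard quadrilateral-plus-quasi-convexity computation; the paper does not supply its own proof of this lemma but simply cites it from \cite[Remark~2.1]{AMS}. As you note, your argument actually yields the slightly sharper constant $2\delta+\eta+\d(s,t)$, which of course is bounded by the stated $\eta'$.
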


The next lemma easily follows from the standard fact that the union of quasi-convex subsets in a hyperbolic metric space is again quasi-convex (see \cite[Lemma~2.1]{Min-1}).

\begin{lem}\label{lem:union_of_qc} Suppose that $H,K \leqslant G$ and $s \in S$ are such that the orbits $H(s)$ and $K(s)$ are quasi-convex. Then for $Y=(H \cup K) \setminus \{1\}$ the orbit $Y(s)$ is also quasi-convex in $S$.    
\end{lem}

\begin{lem}\label{lem:quadr-H} Suppose that $s \in S$ and $H\leqslant G$ is a subgroup such that the $H$-orbit $ H(s)$ is $\eta$-quasi-convex, for some $\eta \ge 0$.
Assume that we are given $\lambda\ge 1$ and $c \ge 0$, and let $\varkappa=\varkappa(\lambda,c)$ be the constant provided by Lemma~\ref{lem:stab_of_qgeod}. If $q$ is a
$(\lambda,c)$-quasi-geodesic path in $S$ with $\d(q_-, H(s)) \le \e$, $\d(q_+, H(s)) \le \e$, for some $\e \ge 0$, and $x\in q$ is any point then
$\d(x, H(s)) \le \varkappa+2\delta+\e+\eta$. Moreover, if, additionally,  $x$ satisfies $\d(q_-,x) >\e+\varkappa+2\delta$ and $\d(x,q_+) >\e+\varkappa+2\delta$
then $\d(x, H(s)) \le \varkappa+2\delta+\eta$.
\end{lem}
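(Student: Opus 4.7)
The plan is to replace the quasi-geodesic $q$ by a genuine geodesic and then compare this geodesic to a geodesic quadrilateral whose two ``vertical'' sides have length at most $\e$ and whose ``top'' side runs inside $H(s)$.

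First I would fix elements $h_1,h_2\in H$ with $\d(q_-,h_1(s))\le\e$ and $\d(q_+,h_2(s))\le\e$, and let $\gamma$ be a geodesic from $q_-$ to $q_+$. By Lemma~\ref{lem:stab_of_qgeod}, for any $x\in q$ there is a point $y\in\gamma$ with $\d(x,y)\le\varkappa$. Consider the geodesic quadrilateral with vertices $q_-,h_1(s),h_2(s),q_+$ and sides $\gamma$, $[q_-,h_1(s)]$, $[h_1(s),h_2(s)]$, $[h_2(s),q_+]$. Splitting it into two triangles along a diagonal and applying $\delta$-slimness twice shows that $y$ is within $2\delta$ of some point $z$ lying on one of the other three sides.

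The argument then splits into three cases. If $z\in[q_-,h_1(s)]$ then $\d(y,h_1(s))\le 2\delta+\e$ and also $\d(y,q_-)\le 2\delta+\e$, which together with $\d(x,y)\le\varkappa$ gives both $\d(x,H(s))\le\varkappa+2\delta+\e$ and $\d(x,q_-)\le\varkappa+2\delta+\e$. The case $z\in[h_2(s),q_+]$ is symmetric and yields $\d(x,H(s))\le\varkappa+2\delta+\e$ and $\d(x,q_+)\le\varkappa+2\delta+\e$. If $z$ lies on the ``top'' side $[h_1(s),h_2(s)]$, then since both endpoints of that side are in $H(s)$, the $\eta$-quasi-convexity of $H(s)$ gives a point of $H(s)$ within $\eta$ of $z$, so $\d(x,H(s))\le\varkappa+2\delta+\eta$.

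In all three cases $\d(x,H(s))\le\varkappa+2\delta+\e+\eta$, proving the main inequality. For the ``moreover'' clause, the hypotheses $\d(q_-,x)>\e+\varkappa+2\delta$ and $\d(x,q_+)>\e+\varkappa+2\delta$ directly rule out the first and third cases by the bounds on $\d(x,q_\pm)$ obtained there, so only the middle case survives, giving the sharper estimate $\d(x,H(s))\le\varkappa+2\delta+\eta$. No real obstacle is expected here; the only care needed is to verify that the thin-quadrilateral constant really is $2\delta$ (not $\delta$) and to record the side estimates $\d(y,q_\pm)\le 2\delta+\e$ used to trigger the ``moreover'' part.
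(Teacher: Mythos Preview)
Your proof is correct and follows essentially the same route as the paper's: pass from $q$ to a geodesic via stability of quasi-geodesics, use $2\delta$-slimness of the quadrilateral with short sides of length $\le\e$, and then invoke $\eta$-quasi-convexity of $H(s)$ on the ``top'' side. The only slip is a labeling one in the last paragraph: you write ``rule out the first and third cases'' and ``only the middle case survives,'' but in your ordering the short-side cases are the first and second, and it is your \emph{third} (top-side) case that should survive.
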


\begin{proof} Take $h_1,h_2 \in H$ so that $\d(q_-,h_1(s)) \le \e$ and
$\d(q_+,h_2(s)) \le \e$, and consider a geodesic quadrangle $Q$ in $S$ with vertices $h_1 (s)$, $q_-$, $q_+$ and $h_2(s)$.
If $a,b$ are adjacent vertices of $Q$ we will use $[a,b]$ to denote the side of $Q$ joining $a$ to $b$.
Then, according to Lemma \ref{lem:stab_of_qgeod}, there is $y \in [q_-,q_+]$ such that $\d(x,y) \le \varkappa$.
Observe that $y$ belongs to the $(2\delta+\e)$-neighborhood of $[h_1(s),h_2(s)]$ (because $Q$
is $2\delta$-slim), and the latter segment lies in the $\eta$-neighborhood of $ H(s)$. Therefore $\d(x, H(s)) \le \d(x,y)+\d(y, H(s)) \le \varkappa+2\delta+\e+\eta$.

To prove the second claim of the lemma, assume that $\d(q_-,x) >\e+\varkappa+2\delta$ and $\d(x,q_+) >\e+\varkappa+2\delta$. Then
the triangle inequality implies that
\[ \d(q_-,y) \ge \d(q_-,x)-\d(x,y) > \e+2\delta \mbox{ and } \d(y,q_+) \ge \d(x,q_+)-\d(x,y) > \e+2\delta.
\]
Therefore $\d(y,[h_1(s),q_-]) \ge \d(y,q_-)-\|[h_1(s),q_-]\|> \e+2\delta-\e=2\delta$, and, similarly, $\d(y,[q_+,h_2(s)])>2\delta$.
But since $Q$ is $2\delta$-slim, the latter shows that $y$ must belong to the $2\delta$-neighborhood of the side $[h_1(s),h_2(s)]$.
On the other hand, by $\eta$-quasi-convexity of $ H(s)$, this side is contained in the $\eta$-neighborhood of $ H(s)$.
Thus $\d(y, H(s)) \le 2\delta+\eta$, which implies that $\d(x, H(s)) \le \varkappa+2\delta+\eta$, as claimed.
\end{proof}


Recall that, given $s\in S$, an element $g \in G$ is called \emph{loxodromic} if the map $\Z\to S$, $n\to g^n(s)$, is a quasi-isometric embedding. Note that, up to changing constants, this is again independent of the choice of the basepoint $s\in S$. Now we fix a geodesic $[s, g(s)]$ and form a path by concatenating the geodesics $g^n([s, g(s)])$, for all $n\in \Z$. This bi-infinite path is called \emph{a quasi-geodesic axis for $g$ based at $s$}, and it is a $g$-invariant quasi-geodesic on which $g$ acts as a non-trivial translation. 

If $g \in G$ is loxodromic, and $x,y$ are two points on some quasi-geodesic axis $\ell$, of $g$,
we will write $\ell_{[x,y]}$ to denote the segment of $\ell$ (or of $\ell^{-1}$) from $x$ to $y$.

Associated to each loxodromic element $g$ are two points $g^{+\infty}, g^{-\infty}$ in the Gromov boundary $ \partial S$, of $S$, defined by $g^{+ \infty}=\displaystyle\lim_{n\to +\infty} g^n(s)$ and $g^{- \infty}=\displaystyle\lim_{n\to-\infty} g^n(s)$, for some (equivalently, any) $s\in S$. Loxodromic elements $g,h \in G$ are called \emph{independent} if $\{g^{+\infty}, g^{-\infty}\}\cap\{h^{+\infty}, h^{-\infty}\}=\emptyset$. The action of $G$ on $S$ is called \emph{non-elementary} if $G$ contains at least two independent loxodromic elements. In this case, $G$ will necessarily contain an infinite collection of pairwise independent loxodromic elements, see \cite[Section~3]{Osi13}.

\begin{defn}\label{defn:wpd}    
A loxodromic element $g\in G$ is called a \emph{WPD} element if for all $s\in S$ and $\e\geq 0$ there exists $N\in\N$ such that
\[
|\{u\in G\mid \d(u(s),s)\leq \e, \d\left(ug^N(s),g^N(s)\right)\leq \e\}|<\infty.
\]
\end{defn}

The action of $G$ on $S$ is called \emph{WPD} if every loxodromic element is a WPD element and the action is called \emph{partially WPD} if $G$ contains at least one loxodromic WPD element.

Recall that a group is said to be \emph{elementary} if it has a cyclic subgroup of finite index.

\begin{lem}{\rm \cite[Lemma 6.5, Corollary 6.6]{DGO}}\label{lem:elemrem}
Every loxodromic WPD element $g \in G$ is contained in a unique maximal elementary subgroup
$E_G(g) \leqslant G$, which can be defined by
\begin{equation}\label{eq:def_of_elem}
E_G(g)= \{x \in G \mid \exists~m,n\in \Z\setminus \{ 0\}  \mbox{ such that }  x^{-1}g^mx=g^n\}.
\end{equation}
\end{lem}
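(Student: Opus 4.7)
Set $E = \{x \in G \mid \exists\, m,n \in \Z \setminus \{0\},\ x^{-1}g^m x = g^n\}$. I will verify three properties that together yield the lemma: (a) $E$ is a subgroup of $G$ containing $g$; (b) $\langle g\rangle$ has finite index in $E$, so $E$ is elementary; (c) every elementary subgroup of $G$ containing $g$ is contained in $E$. Property (a) is a routine computation: the identity $x^{-1}g^m x = g^n$ rearranges as $xg^n x^{-1}= g^m$, giving $x^{-1}\in E$, and implies $x^{-1}g^{mk}x = g^{nk}$ for every $k\in\Z$; combined with $y^{-1}g^p y = g^q$, this yields $(xy)^{-1}g^{mp}(xy) = y^{-1}g^{np}y = g^{nq}$, with $mp,nq \ne 0$ since $g$ has infinite order. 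Property (c) is also algebraic: any elementary $H \ni g$ is infinite virtually cyclic, so $\langle g\rangle$ has finite index in $H$; for $h \in H$, both $\langle g\rangle$ and $h\langle g\rangle h^{-1}$ have finite index in $H$, so their intersection is nontrivial, providing nonzero integers $m,n$ with $h^{-1}g^m h = g^n$, placing $h \in E$.

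The substantive step is (b), where WPD enters. Fix a basepoint $s\in S$ and a quasi-geodesic axis $\ell$ of $g$ through $s$. For any $x\in E$ with $x^{-1}g^m x = g^n$, comparing the orbit displacements of the conjugate elements $g^{mk}$ and $g^{nk}$ as $k\to\infty$ forces $|m|=|n|$. Therefore $x^{-1}(\ell)$ is a quasi-geodesic axis of $g^m$ with the same two endpoints $\{g^{\pm\infty}\}$ on $\partial S$ as $\ell$, so by stability of quasi-geodesics (Lemma~\ref{lem:stab_of_qgeod}), $x(\ell)$ lies within Hausdorff distance $D$ of $\ell$ for some constant $D$ depending only on $g$. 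Let $E^+\leqslant E$ be the subgroup of \emph{orientation-preserving} elements (those admitting $m=n$ in the defining relation); an easy check shows $[E:E^+]\le 2$, so it suffices to show $[E^+:\langle g\rangle]<\infty$. Any $x\in E^+$ can be multiplied by an appropriate power of $g$ so that $\d(x(s),s)\le \e_1$, for some $\e_1$ depending only on $g$ and $D$. The key geometric claim is that such a normalized $x$ further satisfies a uniform bound $\d(xg^N(s), g^N(s))\le \e_3$ for every $N \in \N$, with $\e_3$ independent of $x$ and $N$; this holds because $x$, as an isometry fixing the ordered pair $(g^{+\infty},g^{-\infty})$ setwise and mapping $\ell$ into its $D$-neighborhood, must act on $\ell$ as an approximate translation whose magnitude is controlled by $\d(x(s),s)$. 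Given this bound, applying WPD (Definition~\ref{defn:wpd}) with $\e=\e_3$ produces an integer $N_0$ such that the set $\{u\in G\mid \d(u(s),s)\le \e_3,\ \d(ug^{N_0}(s), g^{N_0}(s))\le \e_3\}$ is finite; since each coset of $\langle g\rangle$ in $E^+$ contains only boundedly many normalized representatives, all lying in this set, we conclude $[E^+:\langle g\rangle]<\infty$.

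The principal obstacle in this plan is establishing the uniform geometric bound $\d(xg^N(s),g^N(s))\le \e_3$, since the integer $m=m_x$ witnessing $x\in E^+$ generally depends on $x$, so a purely algebraic estimate using $xg^{m_x}=g^{m_x}x$ would pick up an $m_x$-dependent error. Overcoming this requires the geometric input that an isometry of a $\delta$-hyperbolic space preserving the oriented pair of boundary points of $\ell$ and mapping $\ell$ into its $D$-neighborhood must behave on $\ell$ as a near-translation, whose translation amount is bounded in terms of its displacement at any single point of $\ell$ plus constants depending only on $\delta$ and the quasi-isometry constants of $\ell$. Once this is in hand, WPD is the black-box that converts bounded displacement on two far-apart points of $\ell$ into finiteness, closing the argument.
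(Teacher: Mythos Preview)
The paper does not prove this lemma at all; it is stated as a citation to \cite[Lemma~6.5, Corollary~6.6]{DGO} and no argument is given. So there is no ``paper's own proof'' to compare against.

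Your outline is correct and is essentially the standard argument (originating with Bestvina--Fujiwara and carried out in this generality in \cite{DGO}). Parts (a) and (c) are routine as you say, and your reduction of (b) to the orientation-preserving subgroup $E^+$ via $|m|=|n|$ (comparing translation lengths of conjugate elements) is fine. The step you flag as the obstacle---that an orientation-preserving isometry $x$ with $x(\ell)$ in the $D$-neighborhood of $\ell$ and $\d(x(s),s)\le\e_1$ must satisfy $\d(xg^N(s),g^N(s))\le\e_3$ uniformly in $N$---is true and can be justified exactly as you suggest: since $x$ is an isometry, $x(\ell)$ and $\ell$ are $(\lambda,c)$-quasi-geodesics at Hausdorff distance $\le D$ with the same ordered pair of endpoints in $\partial S$, so the induced correspondence $n\mapsto \phi(n)$ (where $x(g^n(s))$ is within $D+\d(s,g(s))$ of $g^{\phi(n)}(s)$) is a coarse order-preserving isometry of $\Z$, hence a coarse translation whose amount is controlled by $\phi(0)$, i.e., by $\e_1$. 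With that in hand, WPD gives finiteness exactly as you describe, and your observation that each $\langle g\rangle$-coset in $E^+$ meets the normalized set in boundedly many elements (since $\d(g^k(s),s)\le 4\e_1$ forces $|k|$ bounded) completes the count.
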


A subgroup $H\leqslant G$ which acts properly on $S$ and has quasi-convex orbits is called \emph{$S$-convex compact}, or simply \emph{convex cocompact} when the space $S$ is understood. Equivalently, $H$ is finitely generated and, for every $s \in S$,  the orbit map $h \mapsto h(s)$ is a quasi-isometric embedding of $H$ (with the word metric from a finite generating set) into $S$ (see, for example, \cite[Lemma 2.5]{AMS}). It follows that every convex cocompact subgroup is a hyperbolic group in the sense of Gromov. Note that a cyclic subgroup $\langle g\rangle$ is $S$-convex cocompact if and only if $g$ is loxodromic with respect to the action on $S$.

\begin{lem} \label{lem:cong_of_cc_is_cc} Let $G$ be a group acting by isometries on a hyperbolic geodesic metric space $S$. If $H \leqslant G$ is convex cocompact with respect to this action then so is $gHg^{-1}$, for every $g \in G$.
\end{lem}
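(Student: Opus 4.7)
The plan is to verify the two defining properties of convex cocompactness for $H'=gHg^{-1}$ directly, by transporting the corresponding properties of $H$ via the isometry $g$. Everything hinges on the elementary facts that $g:S \to S$ is a bijective isometry and that the map $H \to H'$, $h \mapsto ghg^{-1}$, is a bijection.

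First I verify that $H'$ acts properly on $S$. Fix $s \in S$ and $R \ge 0$, and set $t = g^{-1}(s)$. For any $h \in H$, since $g$ is an isometry,
\[
\d\bigl(s, (ghg^{-1})(s)\bigr) \;=\; \d\bigl(g^{-1}(s), (hg^{-1})(s)\bigr) \;=\; \d(t, h(t)).
\]
Hence the set $\{h' \in H' \mid \d(s, h'(s)) \le R\}$ is in bijection (via $h \mapsto ghg^{-1}$) with $\{h \in H \mid \d(t, h(t)) \le R\}$, and the latter is finite by properness of the $H$-action at the point $t$.

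Next I check that $H'$ has quasi-convex orbits. By hypothesis there is $s_0 \in S$ and $\eta \ge 0$ such that $H(s_0) \subseteq S$ is $\eta$-quasi-convex. Set $t = g^{-1}(s_0)$. Applying Lemma~\ref{lem:qc_orbits}, the orbit $H(t)$ is $\eta'$-quasi-convex for some $\eta' \ge 0$. Since $g$ is a bijective isometry, it sends geodesics to geodesics and $\eta'$-neighborhoods to $\eta'$-neighborhoods; thus the image $g\bigl(H(t)\bigr) = gHg^{-1}(s_0) = H'(s_0)$ remains $\eta'$-quasi-convex in $S$. This completes the verification.

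There is no real obstacle: the entire lemma is a formal consequence of $g$ acting by isometries, and the only minor point is remembering to shift basepoint by $g^{-1}$ so that the computation reduces directly to the hypothesis on $H$.
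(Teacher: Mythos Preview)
Your proof is correct and follows essentially the same approach as the paper's own argument: both verify properness by pulling back via $t=g^{-1}(s)$ to reduce to properness of the $H$-action, and both obtain quasi-convexity of the $gHg^{-1}$-orbit by recognizing it as the $g$-translate of an $H$-orbit (with basepoint-independence of quasi-convex orbits, i.e.\ Lemma~\ref{lem:qc_orbits}, used implicitly or explicitly). The only cosmetic difference is that you invoke Lemma~\ref{lem:qc_orbits} explicitly before translating by $g$, whereas the paper translates first and appeals to basepoint-independence tacitly.
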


\begin{proof} Assume that $H$ is convex cocompact and
take any $g \in G$. For any $s \in S$ set $s'=g^{-1}(s) \in S$ and observe that 
\[\d\left(ghg^{-1}(s),s\right)=\d(h(s'),s'), \text{ for all } h \in H,\]
whence properness of the action of $H$ on $S$ implies properness of the action of $gHg^{-1}$ on $S$.

The orbit $gHg^{-1}(s)$ is the $g$-translate of the  orbit $H(s')$. Since $H(s')$ is quasi-convex in $S$, the same holds for $gHg^{-1}(s)$. Therefore $gHg^{-1}$ is also convex cocompact.
\end{proof}

\subsection{Permissible measures and random walks}\label{sec:rw}

Let $G$ be a countable group which has a non-elementary action on a hyperbolic metric space $S$ and let $\mu\colon 2^G\to\R_{\geq 0}$ be a probability measure on $G$. Let $\mathrm{Supp}(\mu)$ denote the support of $\mu$, that is $\mathrm{Supp}(\mu)=\{g \in G \mid \mu(g)>0\}$,  and let $\Gamma_\mu$ denote the subsemigroup of $G$ generated by $\mathrm{Supp}(\mu)$. We say that $\mu$ is \emph{bounded} if $\mathrm{Supp}(\mu)(s)$ is bounded, for any (equivalently, all)  $s \in S$. The measure $\mu$ is \emph{reversible} if $\Gamma_\mu$ is a subgroup of $G$, \emph{non-elementary} if $\Gamma_\mu$ contains at least two independent loxodromic elements, and \emph{WPD} if $\Gamma_\mu$ has at least one loxodromic WPD element (with respect to the induced action on $S$). When $\mu$ is reversible, non-elementary and WPD, there is a unique maximal finite subgroup of $G$ normalized by $\Gamma_\mu$ which we denote by $E(\mu)$ (this follows from a combination of \cite[Theorem~1.4]{Osi13} with \cite[Lemma~5.5]{H16} or with \cite[Lemma~5.6]{AMS}). Finally, we say that $\mu$ is \emph{symmetric} if $\mu(g^{-1})=\mu(g)$, for all $g \in G$, and $\mu$ has \emph{generating support} if $\Gamma_\mu=G$.

\begin{defn}\cite[Definition 2.8]{AH21}\label{defn:permissible}
Let $G$ be a group with a non-elementary, partially WPD action on a hyperbolic metric space $S$. Then a probability measure $\mu$ on $G$ is called \emph{permissible} if $\mu$ is bounded, reversible, non-elementary, WPD and $E(\mu)=E(G)$.
\end{defn}

Note that when $G$ is finitely generated, any probability measure whose support is a finite, symmetric generating set of $G$ will be permissible. For example, the uniform measure on a finite symmetric generating set will be a permissible measure on $G$ in this case. In general, whenever $G$ has a non-elementary partially WPD action on a hyperbolic metric space $S$,  $G$ has a finitely generated subgroup $F$ such that $F$ contains loxodromic WPD elements, the action of $F$ on $S$ is non-elementary, and the maximal finite subgroup of $G$ normalized by $F$ is equal to $E(G)$ (this follows from \cite[Theorem~1.4]{Osi13} and \cite[Lemma~6.17]{DGO}). In this case any probability measure whose support is a finite symmetric generating set of $F$, for example the uniform measure on this set, will be permissible. In particular, permissible probability measures exist for any such action. Moreover, if $A$ is any finite symmetric subset of $G$ such that $F\leqslant \langle A\rangle$, then any measure $\mu$ with $\mathrm{Supp}(\mu)=A$ is permissible. Thus any finite subset of $G$ is contained in the support of some permissible measure.

Let $\mu$ be a probability measure on a group $G$. A \emph{random walk} of length $n$ with respect to $\mu$ means a random element $w_n=g_1 \dots g_n$, where each $g_i$ is chosen independently according to the measure $\mu$. In other words, $w_n$ is a random element of $G$ chosen with respect to the probability measure $\mu^{\ast n}$, the $n$-fold convolution of $\mu$ with itself. We say that $w_n$ satisfies some property $P$ with probability $\alpha$ if $\mu^{\ast n}(\{g\in G\;|\; g \text{ satisfies } P\})=\alpha$. If $(P_n)$ is a sequence of such properties, we say that $w_n$ satisfies $P_n$ \emph{asymptotically almost surely} if the probability  that $w_n$ satisfies $P_n$ tends to $1$ as $n$ goes to infinity, i.e.
\[
\lim_{n\to\infty}\mu^{\ast n}(\{g\in G\;|\; g \text{ satisfies } P_n\})=1.
\]

For example, if $G$ is a countable group with a non-elementary action on a hyperbolic metric space and $\mu$ is a non-elementary probability measure on $G$, then Maher-Tiozzo showed that a random walk $w_n$ with respect to $\mu$ will be loxodromic asymptotically almost surely \cite[Theorem 1.4]{MT18}. Moreover, they also show the following.

\begin{thm}[{\cite[Theorem 1.2]{MT18}}]\label{thm:drift}
Let $G$ be a group with a non-elementary, partially WPD action on a hyperbolic metric space $S$, let $\mu$ be a  permissible probability measure on $G$, and let $w_n$ be a random walk with respect to $\mu$. Then for all $s\in S$, there exists a constant $D>0$ such that for all $\e>0$, 
\[
(1-\e)Dn\leq d(s, w_n(s))\leq (1+\e)Dn
\]
asymptotically almost surely.
\end{thm}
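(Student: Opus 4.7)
The argument splits into two logically independent parts: producing a deterministic limit $D \ge 0$ for $d(s, w_n(s))/n$ almost surely, and then showing $D > 0$. The stated two-sided bound follows immediately from the almost sure convergence.

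For the existence of $D$, I would apply Kingman's subadditive ergodic theorem. Boundedness of $\mu$ supplies a constant $C > 0$ with $d(s, g(s)) \le C$ for every $g \in \supp(\mu)$, and the triangle inequality together with the fact that $G$ acts by isometries gives
\[ d(s, w_{n+m}(s)) \le d(s, w_n(s)) + d\bigl(s,\, (g_{n+1}\cdots g_{n+m})(s)\bigr). \]
Hence $(d(s, w_n(s)))_n$ is a subadditive cocycle over the ergodic Bernoulli shift on $(G^{\N}, \mu^{\otimes \N})$ with integrable (in fact uniformly bounded) one-step increments, so Kingman's theorem yields a deterministic $D \in [0, C]$ with $d(s, w_n(s))/n \to D$ almost surely. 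Almost sure convergence then gives the stated bounds $(1-\e)Dn \le d(s, w_n(s)) \le (1+\e)Dn$ asymptotically almost surely.

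The main obstacle is positivity of $D$. My plan would be to use non-elementarity and WPD to construct a $\mu$-stationary Borel probability measure $\nu$ on the Gromov boundary $\partial S$ and to prove that $\nu$ is non-atomic: non-elementarity of $\Gamma_\mu$ provides two independent loxodromic elements with disjoint fixed-point pairs in $\partial S$, while WPD (via the elementary envelopes $E_G(g)$ of \cref{lem:elemrem}) controls pointwise boundary stabilizers enough to run a ping-pong / conjugation argument that forbids atoms. Once $\nu$ is non-atomic, one shows that $w_n(s) \to \xi \in \partial S$ almost surely with $\xi$ distributed according to $\nu$, and a Birkhoff ergodic theorem applied to the Busemann cocycle $(g, \xi) \mapsto \beta_\xi(s, g(s))$ then gives linear growth of $\beta_\xi(s, w_n(s))$. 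Since $\beta_\xi(s, w_n(s)) \le d(s, w_n(s))$, this forces $D > 0$.

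The hardest step is non-atomicity of $\nu$: without WPD, boundary stabilizers need not be virtually cyclic and point masses cannot be excluded by a ping-pong argument; without non-elementarity, $\Gamma_\mu$ lacks the pair of independent loxodromic elements needed to set up ping-pong at all. Both hypotheses are essential here, which is precisely why permissibility bundles boundedness, non-elementarity, WPD, and reversibility together.
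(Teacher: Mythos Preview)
The paper does not prove this statement at all: it is quoted as \cite[Theorem~1.2]{MT18} (Maher--Tiozzo) and used as a black box, so there is no in-paper argument to compare your proposal against.

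Your outline is broadly the Maher--Tiozzo strategy (Kingman for the existence of $D$, a non-atomic $\mu$-stationary measure on the boundary plus almost-sure convergence of $(w_n(s))$ for positivity), but you misattribute the role of the WPD hypothesis. Non-atomicity of the stationary measure $\nu$ follows from non-elementarity of $\Gamma_\mu$ alone: if $\nu$ had an atom of maximal mass, the (finite, non-empty) set of such atoms would be $\Gamma_\mu$-invariant, and a non-elementary group has no finite invariant subset of $\partial S$. No control on point stabilisers, and in particular no WPD, is needed here; indeed \cite[Theorem~1.2]{MT18} does not assume WPD. The permissibility hypothesis in the present paper packages WPD in because that is the paper's standing framework, not because drift requires it, so your closing paragraph draws the wrong moral.

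A second, more technical gap: since $S$ need not be proper, the Gromov boundary can be badly behaved (non-metrisable, measurability issues for the Busemann cocycle). Maher--Tiozzo work on a separable $S$ and pass through the horofunction boundary before projecting to $\partial S$; your sketch of ``apply Birkhoff to the Busemann cocycle on $\partial S$'' hides real work at this step.
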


With this terminology, we can also rephrase the definition of a topologically $\mu$-mixing action (see Definition \ref{def:mu-mixing}) in terms of random walks. That is, the action of $G$ on a topological space $X$ is topologically $\mu$-mixing if and only if for any non-empty open subsets $U$ and $V$ of $X$ we have $w_n(U)\cap V\neq\emptyset$ asymptotically almost surely.

Now, suppose we have a sequence $(\mu_i)$ of permissible probability measures on $G$. For a fixed $n, k\in \N$, we consider the product measure $\mu_1^{\ast n}\times...\times \mu_k^{\ast n}$ on $G^k$ and let $\mu_{n, k}$ denote the push-forward of this product measure to $\Sub(G)$ under the map $G^k\to\Sub(G)$ which sends $(g_1,...,g_k)\to\langle g_1,...,g_k\rangle$. A random element with respect to $\mu_{n, k}$ can be thought of as a subgroup of the form  $\langle w_{n, 1},\dots, w_{n, k}\rangle$ where each $w_{n, i}$ is a random walk of length $n$ with respect to $\mu_i$. We will say that this subgroup $\langle w_{n, 1},\dots, w_{n, k}\rangle$ satisfies some property $P$ with probability $\alpha$ if $\mu_{n, k}(\{H\in \Sub(G)\;|\; H \text{ satisfies } P\}=\alpha$. For a sequence of properties $P_n$ of subgroups, we say that a random $k$-generated subgroup with respect to the sequence $(\mu_i)$ satisfies $P_n$ asymptotically almost surely if

\[
\lim_{n\to\infty}\mu_{n, k}(\{H\in \Sub(G)\;|\; H \text{ satisfies } P_n\})=1.
\] 
 
 For example, if $G$ is a countable group with a non-elementary action on a hyperbolic metric space and $\mu$ is a non-elementary measure on $G$, then random $k$-generated subgroups with respect to the constant sequence $(\mu)$ are convex cocompact and isomorphic to free groups of rank $k$ asymptotically almost surely \cite[Theorem 1.2]{TayTio}.

\section{Topological dynamics}\label{subsec:top_dyn}

\subsection{Topological transitivity and \texorpdfstring{$\mu$}{mu}-mixing}
Recall that a topological space $X$ is called \textit{Polish} if $X$ is separable and completely metrizable.

Suppose that $G$ acts by homeomorphisms on a topological space $X$. This action  is said to be \emph{topologically $k$-transitive}, for some $k\in \N$, if for any non-empty open sets $U_1,...,U_k, V_1,...,V_k$ in $X$, there exist $g\in G$ such that $g(U_i)\cap V_i\neq \emptyset$, for all $1\leq i\leq k$. Thus, topological $1$-transitivity is the same as topological transitivity, and an action is highly topologically transitive if and only if it is topologically $k$-transitive for all $k\in \N$.

We refer to \cite{CKN} for a comparison of these transitivity conditions and other notions from topological dynamics in various settings. In particular, the following standard consequence of the Baire Category Theorem can be found there. 

\begin{lem}[{\cite[Proposition~1]{CKN}}]\label{lem:tt_dense_orbit}
Suppose $G$ has a topologically transitive action on a Polish space $X$. Then there exists $x\in X$ such that the orbit $G(x)$ is dense in $X$.
\end{lem}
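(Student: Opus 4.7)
The plan is to use the Baire category theorem, which applies because $X$ is Polish and hence a Baire space. First, I would fix a countable base $\{U_n\}_{n \in \mathbb{N}}$ for the topology of $X$, which exists by separability and metrizability. For each $n \in \mathbb{N}$, I would define
\[
W_n \;=\; \bigcup_{g \in G} g(U_n),
\]
which is open as a union of open sets.

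Next I would verify that each $W_n$ is dense in $X$. Indeed, given any non-empty open set $V \subseteq X$, topological transitivity provides some $g \in N(U_n,V)$, meaning $g(U_n) \cap V \neq \emptyset$, so $W_n \cap V \neq \emptyset$. By the Baire category theorem applied in the Polish space $X$, the countable intersection $\bigcap_{n \in \mathbb{N}} W_n$ is dense, and in particular non-empty. I would then pick any point $x \in \bigcap_{n \in \mathbb{N}} W_n$.

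Finally I would check that $G(x)$ is dense. For each $n$, since $x \in W_n$, there is some $g_n \in G$ with $x \in g_n(U_n)$, equivalently $g_n^{-1}(x) \in U_n$. Thus $G(x)$ meets every member of a countable base of the topology, which is enough to conclude that $G(x)$ is dense in $X$. The argument is essentially routine; the only point requiring a moment of care is the translation between the transitivity condition $N(U,V) \neq \emptyset$ and the density of the sets $W_n$, but this is immediate from the definition in \eqref{Eq:TT}.
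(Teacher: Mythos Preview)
Your argument is correct and is precisely the standard Baire category argument the paper has in mind: the paper does not give its own proof of this lemma but merely cites \cite[Proposition~1]{CKN}, describing it as ``a standard consequence of the Baire Category Theorem.'' Your proof is exactly that standard argument, so nothing more is needed.
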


We begin by discussing an elementary example showing that topological $\mu$-mixing defined in the introduction is strictly stronger than topological transitivity and weaker than topological mixing.

\begin{ex}\label{Ex:mix}
Consider the action of $G=\ZZ$ on $\mathbb S^1=\mathbb R/\ZZ$ by irrational rotation. It is well-known and easy to prove that this action is topologically transitive. On the other hand, let $U$, $V$, $W$ be open arcs in $\mathbb S^1$. If these arcs are small and far from each other, the sets $N(U,V)$ and $N(U,W)$ will be disjoint as the action of $G$ on $\mathbb S^1$ is isometric. It follows that these sets can not simultaneously have ``hitting probability $1$" with respect to any random walk on $G$; that is, the action of $G$ on $\mathbb S^1$ is not topologically $\mu$-mixing with respect to any $\mu\in \Prob(G)$.
\end{ex}

Let us now relate topological $\mu$-mixing to high topological transitivity.  

\begin{proof}[Proof of Proposition \ref{Prop:HT}]
Let $G$ be a group $G$, $\mu \in \Prob(G)$. Suppose that $G$ acts on a topological space $X$ and the action is topologically $\mu$-mixing. For any $k\in \NN$ and any non-empty open sets $U_1,...,U_k, V_1,...,V_k\subseteq X$, we have
$$
\begin{array}{rcl}
\lim\limits_{n\to\infty} \mu^{\ast n} \left(\bigcap_{i=1}^k N(U_i, V_i)\right) & = & \lim\limits_{n\to\infty} \mu^{\ast n} \left(G\setminus \bigcup_{i=1}^k (G\setminus N(U_i, V_i))\right)  \ge\\ &&\\&& 1- \lim\limits_{n\to\infty} \sum_{i=1}^k  \mu^{\ast n}  (G\setminus N(U_i, V_i)) = \\ &&\\&& 1 - \sum_{i=1}^k \Big(1- \lim\limits_{n\to\infty} \mu^{\ast n} (N(U_i, V_i))\Big)= 1. 
\end{array}
$$
In particular, $\bigcap_{i=1}^k N(U_i, V_i)\ne \emptyset$ and the proposition follows.
\end{proof}

It is easy to show that, in general, high topological transitivity does not imply $\mu$-mixing. 

\begin{ex}
Let $G$ be a group acting highly topologically transitively on a space $X$ and let $H\leqslant G$ be a subgroup such that the induced action of $H$ on $X$ is not highly topologically transitive (e.g., we can take $H=\{ 1\}$). For any $\mu \in \mathrm{Prob}(G)$ supported on $H$, the action of $G$ is not topologically $\mu$-mixing by Proposition~\ref{Prop:HT}.
\end{ex}

However, in the above example the support of the measure $\mu $ does not generate $G$. Our next goal is to demonstrate that topological $\mu$-mixing is strictly stronger than high topological transitivity even in the case when the measure has generating support. The construction below is inspired by the action of Thompson's group $V$ on the Cantor set $\{0,1\}^{\N}$, and, more specifically, by the discussion of this action in the work  of Bleak and Quick \cite{BQ}.

\begin{prop}\label{prop:high_top_act_not_mixing} There exist a group $G$ generated by a finite symmetric set $A$, a compact Polish space $X$, and an action of $G$ on $X$ by homeomorphisms that is highly topologically transitive but not $\mu$-mixing for the uniform measure $\mu$ supported on $A$.
\end{prop}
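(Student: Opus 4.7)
The plan is to construct the example in the spirit of Thompson's group $V$ acting on the Cantor set $X=\{0,1\}^{\NN}$, following Bleak and Quick's explicit finite generation of $V$ from \cite{BQ}. I take $G$ to be (a subgroup of) the group of prefix-replacement homeomorphisms of $X$, equipped with a carefully chosen finite symmetric generating set $A$, and let $\mu$ be the uniform measure on $A$.

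For the high topological transitivity half, given $k$-tuples of non-empty open sets $(U_i)_{i=1}^k$ and $(V_i)_{i=1}^k$, I refine them to clopen cylinders $u_i\{0,1\}^{\NN}$ and $v_i\{0,1\}^{\NN}$ chosen so that both $\{u_i\}$ and $\{v_i\}$ are antichains in the binary prefix tree. Any finite antichain can be completed to a maximal antichain (i.e., a finite prefix partition of $X$), so one can explicitly build an element of the full prefix-replacement group (and hence of $G$, provided $A$ generates enough of it) that sends $u_i\{0,1\}^{\NN}$ onto $v_i\{0,1\}^{\NN}$ for every $i$; this element is a simultaneous witness for $k$-transitivity.

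The main task is to exhibit non-empty open sets $U,V\subseteq X$ for which $\mu^{\ast n}(N(U,V))\not\to 1$. The strategy is to track a scalar invariant of the random walk $w_n$ that, on the one hand, must lie in a restricted set when $w_n(U)\cap V\ne\emptyset$ and, on the other hand, does not concentrate on that set asymptotically. A natural candidate is the one-sided $\log_2$-slope $\kappa(g) = \log_2 g'_+(x_0)\in\ZZ$ at a base point $x_0\in X$: elements of such prefix-replacement groups are piecewise affine with slopes in $\{2^k : k\in\ZZ\}$, so $\kappa$ takes integer values, and by the chain rule $\kappa(w_n)=\sum_{i=1}^n \kappa(a_i)\bigl(w_{i-1}(x_0)^+\bigr)$ is a bounded-increment cocycle driving an auxiliary random walk on $\ZZ$. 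If I choose $U$ to be a small cylinder around $x_0$ and $V$ a small cylinder of the same depth at a distant location, then once $U$ is deeper than $w_n$'s complexity, $w_n$ acts affinely on $U$, so $w_n(U)$ is a single cylinder whose depth equals $L-\kappa(w_n)$ and whose position is determined by $w_n(x_0)$. Consequently, $w_n(U)\cap V\ne\emptyset$ forces both $\kappa(w_n)$ to lie in a sparse set $E\subset\ZZ$ (dictated by the depth difference $L-\mathrm{depth}(V)$) and $w_n(x_0)$ to lie in a specific cylinder. By standard CLT behavior for the auxiliary $\ZZ$-walk, the probability that $\kappa(w_n)\in E$ stays bounded away from $1$, yielding non-$\mu$-mixing.

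The main obstacle is twofold. First, I need the Bleak--Quick generators to be tuned so that the auxiliary cocycle is genuinely non-degenerate (so that $\kappa(w_n)$ really does spread out or drift rather than sitting at $0$), while simultaneously the group they generate remains flexible enough to permute arbitrary antichains of cylinders, which is what drives high topological transitivity. Second, one must control the non-affine case, i.e., the possibility that $w_n$ has a breakpoint inside $U$ so that $w_n(U)$ splits into several cylinders of varying depths; this is handled by a combinatorial refinement argument showing that, with uniformly positive probability, $U$ can be chosen deep enough relative to the random complexity of $w_n$ that $w_n$ is affine on $U$, so that the $\kappa$-obstruction still dominates the mass of $N(U,V)$.
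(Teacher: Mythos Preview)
Your high topological transitivity argument is fine and matches the paper's in spirit: refine the open sets to cylinders forming an antichain, then use prefix-replacement flexibility to permute them simultaneously.

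The non-$\mu$-mixing argument, however, has a genuine gap. The sets $U,V$ must be fixed \emph{before} taking the limit in $n$; you cannot ``choose $U$ deep enough relative to the random complexity of $w_n$.'' For any fixed cylinder $U$ and a reasonable generating set of $V$, the number of breakpoints of $w_n$ typically grows linearly in $n$, so for large $n$ the element $w_n$ will almost never be affine on $U$; your slope condition then says nothing about whether $w_n(U)\cap V\ne\emptyset$. Your proposed ``combinatorial refinement argument'' would have to show that with probability uniformly bounded away from $0$ no breakpoint of $w_n$ lands in $U$, and this is not justified (and likely false for Thompson's $V$ with standard generators). A second issue: the process $\kappa(w_n)=\sum_i\kappa(a_i)\bigl(w_{i-1}(x_0)\bigr)$ is a cocycle over the orbit of $x_0$, not an i.i.d.\ sum on $\ZZ$; invoking ``standard CLT behavior'' requires ergodic-theoretic input you have not supplied. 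Finally, since $V$ is simple there is no nontrivial homomorphism to abelianize against, so any scalar invariant you can extract must be of this cocycle type, which makes the analysis genuinely delicate.

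The paper sidesteps all of this by \emph{not} using Thompson's group $V$ itself. It takes $G=F_2\ast S_{18}$ (with $F_2=F(x,y)$) acting on the boundary of the Cayley tree of $F_3=F(x,y,z)$: $F_2$ acts by left multiplication and $S_{18}$ permutes the cones of the length-$2$ words lying in $F_3\setminus F_2$, while fixing pointwise every cone $\cone(w)$ with $w\in F_2$, $\|w\|\ge 2$. This second feature is the whole point. There is a retraction $\phi\colon G\to F_2$, and the $F_2$-valued walk $t_n=\phi(w_n)$ is a lazy nearest-neighbour walk on $F_2$. As long as $t_m\ne x$ for every $m\le n$, one has $w_n^{-1}(\cone(x^2))=t_n^{-1}(\cone(x^2))\subseteq\cone(x)\cup\cone(x^{-1})\cup\cone(y)\cup\cone(y^{-1})$, which is disjoint from $\cone(z)$. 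Hence $\mu^{\ast n}\bigl(N(\cone(z),\cone(x^2))\bigr)$ is bounded above by the probability that $(t_n)$ ever hits $x$, and that probability is strictly less than $1$ by transience of random walks on $F_2$. The invariant being tracked is a genuine group homomorphism (the retraction to $F_2$), not a cocycle, which is exactly what makes the argument short; the price is that the action of $G$ on $X$ is not faithful.
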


\begin{proof} Denote by $F_3=F(x,y,z)$ the free group on the set $\{x,y,z\}$ and let $F_2=F(x,y)$ be its subgroup generated by $\{x,y\}$. Take $G=F_2*S_{18}$, where $S_{18}$ is the symmetric group on $18$ letters. 

Let $T$ be the Cayley graph of $F_3$ with respect to $\{x,y,z\}$. We think of $T$ as a rooted tree, where the vertices correspond to reduced words over $\{x,y,z\}^{\pm 1}$, with the root at the empty word (the identity element of $F_3$). We let $X$ be the visual (Gromov) boundary of $T$, which is simply the set of all infinite reduced words over $\{x,y,z\}^{\pm 1}$. The topology on $X$ is the standard one: two such infinite words are close if they share a long common prefix. Thus, a basic open set in $X$ consists of all infinite words that start with a given reduced word $w \in F_3$; we will call this open set the \emph{cone of $w$}, denoted $\cone(w)$.
Clearly, $X$ is a perfect closed subset of the Cantor space $(\{x,y,z\}^{\pm 1})^{\N}$, and, hence, $X$ is itself (homeomorphic to) a Cantor space. For every non-empty reduced word $w \in F_3$ and every $n \in \N\cup\{0\}$ we denote by $\cone_n(w)$ the set of all finite reduced words $u \in F_3$ of length $\|u\|=\|w\|+n$ starting with $w$.

The linear order $x<x^{-1}<y<y^{-1}<z<z^{-1}$ on $\{x,y,z\}^{\pm 1}$ induces a lexicographic (total) order on any collection of reduced words in $F_3$ of length $m$, for each $m \in \N$. It also gives rise to a total order on $X$. Given any two non-empty reduced words $u,v \in F_3$ and arbitrary $n \in \N\cup\{0\}$, there is a unique order-preserving bijection from $\cone_n(u)$ to $\cone_n(v)$; when $n \to \infty$, these bijections induce an order-preserving bijection $$\xi_{u,v}\colon \cone(u) \to \cone(v).$$

Let $\Sigma \subseteq F_3$ be the set of all reduced words of length $2$ in $F_3$, and let $\Omega=\Sigma\setminus F_2$, so that $|\Sigma|=30$ and $|\Omega|=18$. We treat $S_{18} \cong \mathrm{Sym}(\Omega)$ as the subgroup of the full symmetric group $\mathrm{Sym}(\Sigma)$ that permutes elements of $\Omega$ and fixes pointwise all words in $\Sigma \setminus \Omega=\Sigma \cap F_2$. Thus  $S_{18}$ naturally acts on $\Sigma$ by permutations (on the left). This action induces an action of $S_{18}$ on $X$ as follows: for every $u\in \Sigma$, every $w\in \cone(u)$ and every $\sigma \in S_{18}$, we let 
$$
\sigma(w)=\xi_{u,\sigma(u)}(w).
$$
Since $\displaystyle X=\bigsqcup_{u \in \Sigma} \cone(u)$, this gives an action of $S_{18}$ on $X$ by homeomorphisms, which permutes the cones of words of length $2$ in $F_3 \setminus F_2$ and fixes pointwise all infinite reduced words whose prefixes of length $2$ belong to $F_2$.
Further, the group $F_2$ acts naturally on $X$ by left multiplication. Therefore, we can combine these two actions to obtain a left action of the free product $G=F_2*S_{18}$ on $X$ by homeomorphisms.

Given two non-empty reduced words $u,v \in F_3$ and $g \in G$ we will say that \emph{$g$ sends $\cone(u)$ to $\cone(v)$}, writing $g[\cone(u)]=\cone(v)$, if $g$ induces an order-preserving bijection between $\cone(u)$ and $\cone(v)$; in other words, we have
\[g\vert_{\cone(u)}=\xi_{u,v}.\]
The definitions immediately imply that 
\[\sigma[\cone(w)]=\cone(\sigma(w)), \text{ for all } \sigma \in S_{18} \text{ and all } w \in \Sigma, \text{ and}\]
\[a[\cone(u)]=\cone(au),~\text{ for every } a \in \{x,y\}^{\pm 1} \text{ and all  } u \in F_3, \text{ with } \|u\|\ge 2.\]

Given an element $g\in G$ and a subset $A\subseteq X$, we denote by $g\vert _A$ the restriction of $g$ to $A$.

\begin{claim}\label{claim:1}
For every integer $n \ge 2$, and any reduced word $u \in F_3 \setminus (F_2 \cup \{z^{-n}\})$, of length $n$, there is an element $f \in G$ such that 
\begin{equation}\label{eq:prop_of_f}
f[\cone(u)]=\cone(z^2)   \text{ and } f[\cone(z^{-n})]=\cone(z^{-2}).
\end{equation}
\end{claim}

\begin{proof}[Proof of Claim~\ref{claim:1}]
If $n=2$ then an element of $S_{18}$ does the required job. Thus, we can assume that $n>2$. 

Let $u_2$ be the prefix of $u$ of length $2$. We first choose elements $a \in  \{x,y\}^{\pm 1}$ and $\sigma \in S_{18}$ as follows. 
\begin{itemize}
    \item[1)] If $u_2 \in \Sigma\setminus\Omega$ starts with a letter $a \in \{x,y\}^{\pm 1}$, then let $\sigma \in S_{18}$ be a permutation fixing $\cone(u_2)$ and sending $\cone(z^{-2})$ to $\cone(a z^{-1})$. 
    \item[2)] If $u_2=z^{-2}$, then take $a=x$ and let $\sigma \in S_{18}$ be any permutation sending $\cone(z^{-2})$ to $\cone(a z^{-1})$.
    \item[3)] Otherwise (if $u_2 \in\Omega$ and $u_2 \neq z^{-2}$), set $a=x$ and let $\sigma \in S_{18}$ be any permutation sending $\cone(u_2)$ to $\cone(az)$ and $\cone(z^{-2})$ to $\cone(az^{-1})$.
\end{itemize}

Now, we have arranged $a \in  \{x,y\}^{\pm 1}$ and $\sigma \in S_{18}$ in such a way that $a^{-1}\sigma [\cone(u)]=\cone (u')$, where $u' \in F_3 \setminus F_2$ has length $n-1 \ge 2$, and $a^{-1}\sigma [\cone(z^{-n})]=\cone(z^{-n+1})$ (note that in the last equality we used the fact that  $\sigma[\cone(z^{-n})]=\cone(a z^{-n+1})$, which holds because $\sigma$ induces the unique order-preserving bijection between $\cone_{n-2}(z^{-2})$ and $\cone_{n-2}(az^{-1})$). By induction on $n$, there is $h \in G$ such that $h[\cone(u')]=\cone(z^2)$ and $h[\cone(z^{-n+1})]=\cone(z^{-2})$. Hence the element $f=h a^{-1} \sigma \in G$ satisfies \eqref{eq:prop_of_f}.
\end{proof}

\begin{claim}\label{claim:2}
If $u$ is a reduced word in $F_3 \setminus F_2$ of length $n \ge 2$, then there is an element $g \in G$ such that $g$ interchanges $\cone(u)$ with $\cone(z^{-n})$ and fixes all points in $\cone(w)$, for every other reduced word $w \in F_3$ of length $n$.
\end{claim}

\begin{proof}[Proof of Claim~\ref{claim:2}]
If $u=z^{-n}$, we can take $g=1$, so assume that $u \neq z^{-n}$. Choose the element $f \in G$ according to Claim~\ref{claim:1}, and let $\tau \in S_{18}$ be the transposition interchanging $\cone(z^2)$ with $\cone(z^{-2})$. Then the element $g=f^{-1} \tau f \in G$ will interchange $\cone(u)$ with $\cone(z^{-n})$. It remains to check that $g$ fixes all points in $\cone(w)$, for a reduced word $w$ of length $n$, such that $w \neq u$ and $w \neq z^{-n}$. This is true because $G$ acts by bijections on $X$ and $\tau$ only moves points in $\cone(z^2) \cup \cone(z^{-2})$. Indeed, if $x \in \cone(w)$, then $x \in X \setminus (\cone(u) \cup \cone(z^{-n}))$, so $f(x) \in X \setminus (\cone(z^2) \cup \cone(z^{-2}))$. It follows that $\tau(f(x))=f(x)$, hence $g(x)=x$.
\end{proof}

\begin{claim}\label{claim:3}
The action of $G$ on $X$ is highly topologically transitive.
\end{claim}

\begin{proof}[Proof of Claim~\ref{claim:3}]
Suppose that $k \in \N$ and $U_1,\dots,U_k,V_1,\dots,V_k$ are non-empty open subsets in $X$. Choose $n \ge 2$ so that the following is satisfied:
there exist reduced words $u_i,v_i \in F_3\setminus F_2$ of length $n$
such that 
\begin{itemize}
    \item the words $u_{1},\dots,u_{k}$ are pairwise distinct and the words $v_{1},\dots,v_{k}$ are pairwise distinct;
    \item $\cone(u_i) \subseteq U_i$ and $\cone(v_i) \subseteq V_i$, for all $i=1,\dots,k$.
\end{itemize}
Such a choice is possible because every non-empty open set in $X$ contains a basic open subset of the form $\cone(w)$, for some $w \in F_3\setminus F_2$, and $\cone(w)$ contains $\cone(u)$, for every word $u$ such that $w$ is a prefix of $u$.

Recall that for every $m \ge 2$ the symmetric group $S_m$, on the set $\{1,\dots,m\}$, is generated by all transpositions of the form $(j,m)$, where $j=1,\dots,m-1$. Combining this fact with Claim~\ref{claim:2}, we can conclude that there exists $g \in G$ such that $g(\cone(u_i))=\cone(v_i)$, for every $i=1\dots,k$. It follows that $v_i \in g(U_i) \cap V_i \neq \emptyset$, for all $i=1,\dots,k$, as required.
\end{proof}

Let $A = S_{18}\cup \{x,y\}^{\pm 1} \subseteq G$ and let $\mu$ be the uniform measure supported on $A$. That is, we have $\mu(a)=1/|A|=1/(18!+4) $, for each $a\in A$. 

\begin{claim}\label{claim:4}
The action of $G$ on $X$  is not topologically $\mu$-mixing.
\end{claim}

\begin{proof}[Proof of Claim~\ref{claim:4}]
Consider the probability measure $\nu$ on $F_2=F(x,y)$, given by $\nu(a)=1/|A|$, for each $a \in \{x,y\}^{\pm 1}$, and $\nu(1)=1-4/|A|$. Denote by  $\phi:G \to F_2$  the natural retraction, whose kernel is the normal closure of $S_{18}$ in $G$. 

Let $(w_n)$ be a random walk on $G$ starting at $1$ and corresponding  to $\mu$. 
Then $t_n=\phi(w_n)$ is a random walk on $F_2$ starting at $1$ and corresponding to $\nu$.  
Observe that, as long as $t_m^{-1} \neq x^{-1}$, for all $m=1,\dots,n$, we have 
\[w_n^{-1}(\cone(x^2))=t_n^{-1}(\cone(x^2)) \subseteq \cone(x) \cup \cone(x^{-1}) \cup \cone(y) \cup \cone(y^{-1}).\] Indeed, this is true because, by construction, every $\sigma \in S_{18}$ fixes all points in $\cone(w)$, for every word $w \in F_2$ of length at least $2$. Therefore, the probability $q_n$ that $w_n(\cone(z)) \cap \cone(x^2) \neq \emptyset$ (which can be re-written as $w_n^{-1}(\cone(x^2)) \cap \cone(z) \neq \emptyset$) does not exceed the probability that $t_m$ hits $x$, for some $m \in \{1,\dots,n\}$. The latter probability is bounded above by the probability $r$, of the event $\{\exists\, n \in \N\cup\{0\} \mid t_n=x\}$, that the random walk $(t_n)$ hits $x$ eventually. Note that the random walk $(t_n)$ is transient on $F_2$ by \cite[Theorem~1.16]{Woess}, as the function $f\colon F_2 \to \R$, defined by $f(v)=3^{-\|v\|}$ (where $\|v\|$ denotes the word length of $v$ in $F_2$), is positive, $\nu$-superharmonic and non-constant (see \cite[Section~I.B]{Woess}). Therefore $r<1$, and since $q_n \le r$, for all $n \in \N$, we see that $q_n$ cannot tend to $1$ as $n \to \infty$. This shows that the action of $G$ on $X$ is not topologically $\mu$-mixing.
\end{proof}

Claims~\ref{claim:3} and \ref{claim:4} complete the proof of the proposition.    
\end{proof}


One may observe that the action of the group $G$ on the space $X$ in Proposition~\ref{prop:high_top_act_not_mixing} is not faithful.  Indeed, let $\sigma \in S_{18}$ be the product of $5$ independent transpositions, induced by the order-preserving bijection between $\cone(z)$ and $\cone(z^{-1})$, and let $\tau \in S_{18}$ be the transposition interchanging $\cone (xz)$ with $\cone(xz^{-1})$. Then $x^{-1} \tau x$ represents the same element of $\mathrm{Homeo}(X)$ as $\sigma$, but, of course, $x^{-1} \tau x \neq \sigma$ in $G=F_2*S_{18}$. 

The image $\overline{G}$, of $G$ in  $\mathrm{Homeo}(X)$, 
may be of independent interest, so let us say a few words about it. Let $\overline{F}_2$ and $\overline{S}_{18}$ denote the images of  $F_2$ and $S_{18}$ in $\mathrm{Homeo}(X)$, respectively. Since $F_2$ and $S_{18}$ act on $X$ faithfully, we have $\overline{F}_2 \cong F_2$ and $\overline{S}_{18} \cong S_{18}$. Let $Y \subseteq X$ be the limit set of $F_2$ in $X$, in other words, $Y$ consists of all infinite reduced words over $\{x,y\}^{\pm 1}$. 
Note that  $Y$ is invariant under the actions of $F_2$ and $S_{18}$, by construction, so it is invariant under the action of $G$ on $X$. Since $S_{18}$ acts trivially on $Y$, by definition, the normal closure $H$, of $\overline{S}_{18}$ in $\overline{G}$, also acts trivially on $Y$. On the other hand,  $F_2$ acts on $Y$ faithfully, hence we can conclude that $\overline{F_2} \cap H={1}$ in $\overline{G}$. Thus $\overline{G}$ is a semidirect product $H \rtimes \overline{F_2}$, in particular it is not simple and $\overline{G}/H \cong F_2$.

It is not difficult to check that the action of $\overline{G}$ on $X$ satisfies all the conditions from a criterion of Caprace, Reid and Willis \cite[Proposition~I]{CRW} which implies that $\overline{G}$ is monolithic, i.e., there is a non-trivial normal subgroup $M \lhd \overline{G}$ which is contained in every non-trivial normal subgroup of $\overline{G}$. It follows that $\overline{G}$ cannot be acylindrically hyperbolic (indeed, by \cite[Theorem~8.1]{DGO}, every acylindrically hyperbolic group $P$ contains a non-trivial free normal subgroup, so the monolith of $P$ must be free, but every non-trivial free group has a proper non-trivial characteristic subgroup, whence $P$ cannot be monolithic). In fact, using a more recent criterion of Garrido and Reid \cite[Corollary~1.7]{GR}, the authors have verified that the monolith of $\overline{G}$ is $[H,H]$, the derived subgroup of $H$, and $[H,H]$ is an infinite simple group. We record these observations in the following statement.

\begin{prop} Working under the notation from the proof of Proposition~\ref{prop:high_top_act_not_mixing}, denote by 
$\psi:G \to \mathrm{Homeo}(X)$ the homomorphism arising from the action of $G$ on $X$, and let $\overline{G}=\psi(G)$, $\overline{F}_2=\psi(F_2)$, $\overline{S}_{18}=\psi(S_{18})$ and $H=\ll \overline{S}_{18} \rr^{\overline{G}} \lhd \overline{G}$. Then 
$\overline{F_2} \cong F_2$, $\overline{S}_{18} \cong S_{18}$ and $\overline{G} \cong H \rtimes \overline{F}_2$. Moreover, $[H,H]$ is an infinite simple group and it is the monolith of $\overline{G}$. In particular, $\overline{G}$ is not acylindrically hyperbolic.
\end{prop}

It is, therefore, reasonable to ask the following. 

\begin{ques}
 Does there exist a finitely generated acylindrically hyperbolic group $G$, a highly topologically transitive faithful action of $G$ on a compact Polish space $X$, and a symmetric measure $\mu\in \Prob(G)$ with generating support, such that the action of $G$ on $X$ is not topologically $\mu$-mixing? 
\end{ques}

\subsection{Topologically transitive actions on subgroups}
We now turn to the case where $X=\Sub_\infty(G)$. Recall that all groups under consideration are supposed to be countable and discrete.

\begin{lem}\label{lem:fg_noraml}
Suppose that $N$ is a finitely generated non-trivial normal subgroup of infinite index in $G$. Then the action of $G$ on $\Sub_\infty(G)$ is not topologically transitive. 
\end{lem}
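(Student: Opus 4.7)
The plan is to exploit the two hypotheses on $N$ (finite generation and normality) to produce a non-empty open subset $U$ of $\Sub_\infty(G)$ that is invariant under conjugation, together with a non-empty open subset $V$ disjoint from $U$; topological transitivity would then force $N(U,V)\neq\emptyset$, contradicting $g(U)\subseteq U$ for all $g\in G$.

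More concretely, write $N=\langle n_1,\dots,n_k\rangle$. Recalling that the product topology on $2^G$ has subbasic open sets of the form $\{H\in \Sub(G)\mid g\in H\}$ and $\{H\in \Sub(G)\mid g\notin H\}$ for $g\in G$, I would first set
\[
U=\{H\in \Sub_\infty(G)\mid N\subseteq H\}=\bigcap_{i=1}^{k}\{H\in \Sub_\infty(G)\mid n_i\in H\},
\]
which is open in $\Sub_\infty(G)$ precisely because $N$ is finitely generated. Since $[G:N]=\infty$, the subgroup $N$ itself lies in $U$, so $U\neq\emptyset$. Next, fix any $n\in N\setminus\{1\}$ (available because $N$ is non-trivial) and set
\[
V=\{H\in \Sub_\infty(G)\mid n\notin H\},
\]
which is a subbasic open subset of $\Sub_\infty(G)$. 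The trivial subgroup $\{1\}$ belongs to $V$: it has infinite index in $G$ (the latter being infinite, since $N$ is non-trivial and of infinite index), and clearly does not contain $n$.

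To conclude, observe that normality of $N$ gives $gNg^{-1}=N$ for all $g\in G$, so if $H\in U$ then $N=gNg^{-1}\subseteq gHg^{-1}$, whence $gHg^{-1}\in U$. Thus $U$ is $G$-invariant, and in particular every element of $g(U)$ contains $n\in N$ and therefore does not lie in $V$. So $g(U)\cap V=\emptyset$ for every $g\in G$, i.e.\ $N(U,V)=\emptyset$, and the action of $G$ on $\Sub_\infty(G)$ fails to be topologically transitive.

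There is no serious obstacle here: the argument reduces entirely to identifying the correct basic open sets in the product topology on $2^G$. The only conceptual points are that finite generation of $N$ is what makes the containment condition $N\subseteq H$ open (an infinite intersection of subbasic sets would not suffice), and normality is what makes the resulting open set $G$-invariant under conjugation.
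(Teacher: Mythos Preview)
Your proof is correct and follows essentially the same approach as the paper: both define $U$ as the set of infinite index subgroups containing $N$ (open by finite generation, non-empty since $N\in U$, $G$-invariant by normality) and $V$ as those not containing a fixed non-trivial element of $N$ (with $\{1\}\in V$), then observe $g(U)\cap V=\emptyset$ for all $g\in G$. Your write-up is slightly more explicit about why $G$ is infinite and why $U$ is open, but the argument is the same.
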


\begin{proof}
Since $N$ is finitely generated, there is an open set $U$ in $\Sub_\infty(G)$ consisting of all infinite index subgroups that contain $N$. By normality of $N$, for all $g\in G$ every element of $g(U)$ will also contain $N$ as a subgroup. Let $V$ be the open set of all infinite index subgroups that do not contain a fixed non-trivial element of $N$. Then no element of $V$ can contain $N$ as a subgroup, hence $g(U)\cap V=\emptyset$, for all $g\in G$. Note that $U$ and $V$ are non-empty since $N\in U$ and $\{1\}\in V$.  
\end{proof}

\begin{ex}\label{Ex:Rips}
The action of an infinite hyperbolic group $G$ on $\Sub_{\infty}(G)$ may not be topologically transitive for two reasons. First, $G$ may contain a non-trivial finite normal subgroup. Second, even if we require $G$ to be torsion-free, infinite index non-trivial normal subgroups of $G$ can be finitely generated. Indeed, for every finitely presented group $Q$, there exists a short exact sequence $\{1\}\to N\to G\to Q\to \{1\}$, where $G$ is torsion-free hyperbolic and $N$ is finitely generated, by a well-known construction of Rips \cite{Rip}. 
\end{ex}

Recall that a group $G$ is \emph{invariably generated} if there exists a subset $S\subseteq G$ such that for every function $f\colon S\to G$, $G$ is generated by the subset $\{f(s)^{-1}sf(s) \mid s\in S\}$. That is, $S$ is a generating set of $G$ which will still generate $G$ if each element of $S$ is replaced by a conjugate.

\begin{prop}\label{prop:invgen}
Let $G$ be an infinite invariably generated group. Then the action $G\curvearrowright \Sub_\infty (G)$ is topologically transitive if and only if $G\cong \ZZ$.     
\end{prop}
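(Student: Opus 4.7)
The plan is to prove the two implications separately. The easy direction ($G\cong \Z$) is immediate: every non-trivial subgroup of $\Z$ has finite index, so $\Sub_\infty(\Z)=\{\{0\}\}$ is a one-point space and topological transitivity holds vacuously. For the forward implication, I would assume $G$ is infinite, invariably generated by $S\subseteq G$, and that the conjugation action on $\Sub_\infty(G)$ is topologically transitive, and aim to derive $G\cong \Z$.

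The first step would combine a density-of-orbits argument with invariable generation. The space $\Sub_\infty(G)$ is Polish, because its complement $\bigcup_{n\ge 1}\{K\in \Sub(G)\mid [G:K]\leq n\}$ is an $F_\sigma$ subset of the compact Polish space $\Sub(G)$, and a $G_\delta$ subset of a Polish space is Polish. Applying Lemma~\ref{lem:tt_dense_orbit}, I obtain some $H_0\in \Sub_\infty(G)$ whose $G$-orbit is dense. Now, for each $s\in S$ the set $V_s=\{K\in \Sub_\infty(G)\mid s\in K\}$ is open (the intersection of $\Sub_\infty(G)$ with the clopen set $\{K\in \Sub(G)\mid s\in K\}$). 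If every $V_s$ were non-empty, density would yield, for each $s\in S$, some $g_s\in G$ with $g_sH_0g_s^{-1}\in V_s$, i.e.\ $g_s^{-1}sg_s\in H_0$; invariable generation would then force $G=\la g_s^{-1}sg_s\mid s\in S\ra\leqslant H_0$, contradicting $[G:H_0]=\infty$. Hence some $s\in S$ has $V_s=\emptyset$, meaning no infinite-index subgroup of $G$ contains $s$. In particular $\la s\ra$ has finite index in $G$, and as $G$ is infinite $s$ must have infinite order, so $G$ is infinite virtually cyclic.

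The second step upgrades ``virtually cyclic'' to ``$\cong \Z$''. If $G$ contains a non-trivial finite normal subgroup $F$, then $F$ is finitely generated of infinite index in the infinite group $G$, and Lemma~\ref{lem:fg_noraml} contradicts topological transitivity; thus $G$ has no non-trivial finite normal subgroup. The classification of infinite virtually cyclic groups then gives either $G\cong \Z$ or $G\cong D_\infty$. To rule out the dihedral case I would describe $\Sub_\infty(D_\infty)$ explicitly: with $D_\infty=\la r,s\mid s^2=1,\, srs=r^{-1}\ra$, any subgroup containing a non-trivial rotation has finite index, so $\Sub_\infty(D_\infty)=\{\{1\}\}\cup\{\{1,r^ks\}\mid k\in \Z\}$. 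Since conjugation by $r$ shifts $k$ by $2$ and conjugation by $s$ negates $k$, the order-$2$ subgroups form two distinct conjugacy orbits parametrized by the parity of $k$, while $\{1,r^ks\}$ is the unique element of $\Sub_\infty(D_\infty)$ containing $r^ks$, hence an isolated point. Taking $U=\{\{1,s\}\}$ and $V=\{\{1,rs\}\}$ gives $N(U,V)=\emptyset$, contradicting topological transitivity. Therefore $G\cong \Z$.

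The conceptual core is the density-plus-invariable-generation step, which is short but uses both hypotheses crucially. The main technical obstacle is the endgame: once $G$ is reduced to the virtually cyclic case, one must separately dispose of non-trivial finite normal subgroups (via Lemma~\ref{lem:fg_noraml}) and then eliminate $D_\infty$ itself, which is an invariably generated group that has to be excluded by a direct inspection of the orbit structure on $\Sub_\infty(D_\infty)$.
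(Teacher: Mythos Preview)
Your proof is correct and follows essentially the same approach as the paper: both use Lemma~\ref{lem:tt_dense_orbit} to produce a dense-orbit subgroup $H_0$, observe that invariable generation forces $H_0$ to contain a conjugate of every generator, reduce to the virtually cyclic case, eliminate non-trivial finite normal subgroups via Lemma~\ref{lem:fg_noraml}, and dispose of $D_\infty$ by direct inspection of its isolated order-$2$ subgroups. The only organizational difference is that the paper splits into cases (virtually cyclic vs.\ not) while you deduce virtual cyclicity directly by finding some $s\in S$ with $V_s=\emptyset$; the content is the same.
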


\begin{proof} The sufficiency is obvious, so assume that $G$ acts topologically transitively on $\Sub_\infty (G)$. First, suppose that $G$ is virtually cyclic. Then there is a finite normal subgroup $N \lhd G$ such that $G/N$ is either infinite cyclic or infinite dihedral (see \cite[Lemma~2.5]{FarJon}). By Lemma~\ref{lem:fg_noraml}, $N$ must be trivial, so either $G \cong \Z$ or $G \cong D_\infty$ is infinite dihedral. To rule out the latter case, note that every non-trivial infinite index subgroup of $D_\infty$ is cyclic of order $2$, hence any such subgroup is an isolated point of $\Sub_\infty (D_\infty)$. Now let $a,b \in D_\infty$ be the involutions generating this group (so that $D_\infty \cong \langle a\rangle*\langle b \rangle$). Then $\{\langle a \rangle\}$, $\{\langle b \rangle\}$ are open in $\Sub_\infty (D_\infty)$ but not conjugate in $D_\infty$, therefore the action $D_\infty\curvearrowright \Sub_\infty (D_\infty)$ is not topologically transitive.

Suppose now that $G$ is not virtually cyclic and the action of $G$ on $\Sub_\infty(G)$ is topologically transitive. Lemma \ref{lem:tt_dense_orbit} implies that there exists $H\in\Sub_\infty(G)$ whose orbit is dense in $\Sub_\infty(G)$. For any $x\in G$, $\langle x\rangle\in \Sub_\infty(G)$ since $G$ is not virtually cyclic. Let $U$ be an open subset of $\Sub_\infty(G)$ consisting of infinite index subgroups containing $x$. By topological transitivity, there exists $g\in G$ such that $g^{-1}Hg\in U$, and so $H$ contains a conjugate of $x$ (namely $gxg^{-1}$). 

Now, if $S$ is any subset $G$ then every element of $S$ has a conjugate which belongs to $H$. In particular, if we replace each element of $S$ by such a conjugate we will get a subset of $H$, hence this subset cannot generate all of $G$. Therefore $G$ is not invariably generated.
\end{proof}

Since virtually solvable groups are invariably generated by \cite{Wie}, we obtain the following. 

\begin{cor}\label{cor:solv}
Let $G$ be a virtually solvable group. Then the action $G\curvearrowright \Sub_\infty (G)$ is topologically transitive if and only if $G\cong \ZZ$.
\end{cor}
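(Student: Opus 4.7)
The plan is to derive this corollary as a straightforward consequence of Proposition~\ref{prop:invgen}, using only the external input that every virtually solvable group is invariably generated (Wiegold, \cite{Wie}). Since this external fact supplies exactly the hypothesis needed to apply Proposition~\ref{prop:invgen}, there is no new mathematical content to produce; the argument reduces to combining the two results.

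Carrying the plan out, I would handle the two directions separately. For the ``if'' direction, observe that $\Sub_\infty(\ZZ)=\{\{0\}\}$ is a single point, so any action of $\ZZ$ on it is trivially topologically transitive. For the ``only if'' direction, I will implicitly assume $G$ is infinite (as the statement is degenerate when $G$ is finite, since then $\Sub_\infty(G)=\emptyset$). Given an infinite virtually solvable group $G$, I would cite \cite{Wie} to conclude that $G$ is invariably generated, and then invoke Proposition~\ref{prop:invgen} to conclude $G\cong \ZZ$.

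The main obstacle — which has already been handled — is inside Proposition~\ref{prop:invgen}: it required ruling out non-trivial finitely generated normal subgroups of infinite index via Lemma~\ref{lem:fg_noraml}, a separate analysis of the virtually cyclic case (including the exclusion of $D_\infty$ by exhibiting the isolated non-conjugate points $\langle a\rangle, \langle b\rangle \in \Sub_\infty(D_\infty)$), and an application of Lemma~\ref{lem:tt_dense_orbit} to produce a subgroup $H$ containing a conjugate of every element of $G$, which contradicts invariable generation unless $G$ is virtually cyclic. Since all of this is available, the corollary follows immediately, and no additional estimates or constructions are needed.
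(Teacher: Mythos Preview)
Your proposal is correct and matches the paper's proof exactly: the paper simply cites \cite{Wie} to obtain that virtually solvable groups are invariably generated and then applies Proposition~\ref{prop:invgen}. Your additional remarks about the finite case and the recap of what goes into Proposition~\ref{prop:invgen} are accurate but not needed for the corollary itself.
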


\section{Apt elements}\label{sec:apt}
Throughout this section we will assume that $G$ is a group acting by isometries on a $\delta$-hyperbolic geodesic metric space $(S,\d)$, for some $\delta \ge 0$. Our first goal is to show the existence of loxodromic elements $f\in G$ which are \emph{transverse} to a given subgroup $H\leqslant G$. Loosely speaking, this means that any quasi-geodesic axis of $f$ spends a bounded amount of time inside any neighborhood of the orbit of any coset of $H$ (see Definition~\ref{def:transv_elem}). The existence of these elements will allow us to apply random walk techniques developed in \cite{AH21}. We start by studying an auxiliary notion of $H$-\emph{apt} elements which will be used to construct transverse elements in the next section.

\begin{defn}\label{defn:rwpd} Let $H \leqslant G$ be a subgroup and let $g \in G$ be an element.
We will say that $g$ is $H$-\emph{\apt} if for each $s \in S$ and every $C \ge 0$ there exist $N=N(s, C) \in \N$ and a finite subset $U=U(s, C) \subseteq G$ such that
\begin{equation}\label{eq:wpds-1}\{u \in G \mid\d(u (s),s)\le C, \d(ug^N (s),H(s))\le C \}\subseteq H U.
\end{equation}
\end{defn}

\begin{rem}
It is easy to see that \eqref{eq:wpds-1} is  equivalent to
\begin{equation}\label{eq:wpds-1.5}\{u \in G \mid\d(u (s), H(s))\le C, \d(ug^N (s),H(s))\le C \}\subseteq H U.
\end{equation}
It follows that a loxodromic element $g \in G$ is $H$-apt if and only if for every $s \in S$ and each $C \ge 0$ there are only finitely many right $H$-cosets of elements $u \in G$ that move a sufficiently long segment of a quasi-geodesic axis of $g$,  based at $s$, at most $C$-away from the orbit $H(s)$.
\end{rem}

\begin{rem}\label{rem:apt-indep_of_basepoint} The universal quantifier on $s \in S$ in Definition~\ref{defn:rwpd} can be replaced with the existence quantifier. More precisely, if $H \leqslant G$ and $g \in G$ then $g$ is $H$-\apt{} if and only if there exists $s \in S$ such that for every $C \ge 0$ there are $N=N(C) \in \N$ and a finite subset $U=U(C) \subseteq G$ such that \eqref{eq:wpds-1} holds.
\end{rem}

Indeed, this follows from the triangle inequality, implying that if $s' \in S$ then for all $a,b \in G$ we have $\d(a(s),b(s)) \le \d(a(s'),b(s'))+2\d(s,s')$.

If $H$ is a finite subgroup of $G$, then every loxodromic element of $G$ will be $H$-\apt{}, because in this case the left-hand side of \eqref{eq:wpds-1} will be empty for large enough $N$. When $H$ is convex cocompact, we will show that every loxodromic WPD element of $G$ is $H$-\apt{} (see \cref{prop:rwpds}). Also if $G$ is hyperbolic relative to a finite family of peripheral subgroups, $S$ is the corresponding relative Cayley graph and $H \leqslant G$ is relatively quasi-convex, then we will show that $g$ is $H$-\apt{} for every loxodromic $g \in G$ (see \cref{prop:rel_qc-rwpd}).

Next, we record some basic properties and useful characterizations of $H$-\apt{} elements.

\begin{lem}\label{lem:apt-inverse} Let $H \leqslant G$ and $g \in G$ be  an $H$-\apt{} element. Then $g^{-1}$ is also $H$-\apt{}.
\end{lem}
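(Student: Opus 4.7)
The plan is to show that the defining parameters $N$ and $U$ from the $H$-aptness of $g$ can be recycled, essentially unchanged, for $g^{-1}$, after a right-translation by $g^N$. Intuitively, if $u$ witnesses a failure of $H$-aptness for $g^{-1}$ at parameters $(s,C)$, then replacing $u$ by $h^{-1}ug^{-N}$ for a suitable $h \in H$ should yield a witness for $g$ at the same parameters. Since there are only finitely many such witnesses up to left multiplication by $H$, the original $u$ can only lie in finitely many right $H$-cosets.

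Concretely, fix $s \in S$ and $C \ge 0$, and let $N = N(s,C)$ and $U = U(s,C)$ be the constants guaranteed by the $H$-aptness of $g$. I claim it suffices to take $N' = N$ and $U' = U g^N$ for $g^{-1}$. Suppose $u \in G$ satisfies $\d(u(s), s) \le C$ and $\d(ug^{-N}(s), H(s)) \le C$. Pick $h \in H$ with $\d(ug^{-N}(s), h(s)) \le C$, and set $v = h^{-1}ug^{-N}$. Then directly $\d(v(s), s) \le C$, while
\[\d(vg^N(s), H(s)) = \d(h^{-1}u(s), H(s)) \le \d(h^{-1}u(s), h^{-1}(s)) = \d(u(s), s) \le C,\]
using that $h^{-1} \in H$ and that $H$ acts by isometries. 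Hence $v$ lies in the set appearing in \eqref{eq:wpds-1} for $g$, so $v \in HU$, which gives $u = h \cdot v \cdot g^N \in HUg^N = HU'$, as required.

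There is really no main obstacle here; the only content is the bookkeeping described above, which amounts to exploiting the isometric left action of $H$ to convert a $g^{-N}$ displacement into a $g^{+N}$ one. The only minor subtlety worth flagging is checking that the $H$-orbit neighborhood bound is preserved after applying $h^{-1}$ from the left, which is immediate from $H$-invariance of $H(s)$; this is what legitimizes using the same $N$ and $C$ on both sides.
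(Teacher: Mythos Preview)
Your proof is correct and essentially identical to the paper's own argument: both take $N'=N$, $U'=Ug^N$, set $v=h^{-1}ug^{-N}$, and verify the two distance estimates in exactly the same way.
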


\begin{proof} Indeed, given any $s \in S$ and $C\ge 0$,  let
$N=N(s,C) \in \N$ and $U=U(s,C) \subseteq G$ be provided
by Definition~\ref{defn:rwpd}.

Define $U'=Ug^N \subseteq G$, so that $|U'|=|U|<\infty$, and suppose that $u \in G$ satisfies $\d(s,u(s)) \le C$ and $\d(ug^{-N}(s),h(s)) \le C$, for some $h \in H$.
Then, by letting $v=h^{-1}ug^{-N} \in G$, we obtain
\begin{align*}
 &\d(v(s),s) =\d(h^{-1}ug^{-N}(s),s)=\d(ug^{-N}(s),h(s)) \le C, \mbox{ and } \\   
 &\d(vg^N(s), H(s)) \le \d(h^{-1}u(s),h^{-1}(s))=\d(u(s),s) \le C.
\end{align*} 

It follows that $v \in HU$, and thus $u=hvg^N \in HU'$.
Therefore the element $g^{-1}$ is also $H$-\apt{}, as required.
\end{proof}

\begin{lem}\label{lem:equiv_rwpds}
Suppose that the induced action of $H \leqslant G$ on $S$  has quasi-convex orbits and $g \in G$ is a loxodromic element.
Then  $g$ is $H$-\apt{}  if and only if for all $s \in S$ and $C\ge 0$ there are $N=N(s, C) \in \N$ and a finite subset $U=U(s, C) \subseteq G$ such that
\begin{equation}\label{eq:wpds-2}\{u \in G \mid \exists~n \ge N \mbox{ such that }\d(u(s), H(s))\le C, \d(ug^n(s), H(s))\le C \}\subseteq H U.
\end{equation}
\end{lem}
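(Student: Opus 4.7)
I would prove the two directions separately, with $(\Leftarrow)$ being immediate and $(\Rightarrow)$ being the main content. For $(\Leftarrow)$, given \eqref{eq:wpds-2} for $s, C$ with parameters $N$ and $U$, suppose $u \in G$ satisfies $\d(u(s),s) \le C$ and $\d(ug^N(s), H(s)) \le C$. Since $s \in H(s)$, this forces $\d(u(s), H(s)) \le C$, so taking $n = N$ in \eqref{eq:wpds-2} yields $u \in HU$, verifying \eqref{eq:wpds-1}.

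For $(\Rightarrow)$, the plan is to exploit the fact that if both endpoints of a quasi-geodesic in $S$ lie near the quasi-convex orbit $H(s)$, then so do all of its interior points; this will let me reduce the existential statement over $n \ge N$ to $H$-aptness at a single fixed exponent. Specifically, I would fix a quasi-geodesic axis $\ell$ of $g$ based at $s$, with quasi-geodesic constants $(\lambda, c)$ depending only on $g$ and $s$. Let $\eta \ge 0$ be a quasi-convexity constant for $H(s)$ and let $\varkappa = \varkappa(\lambda, c)$ be as in Lemma~\ref{lem:stab_of_qgeod}. I would set $C' = \varkappa + 2\delta + C + \eta$ and invoke $H$-aptness of $g$ (in the equivalent form \eqref{eq:wpds-1.5}) at the constant $C'$ to obtain $N_0 = N(s, C') \in \N$ and a finite $U_0 = U(s, C') \subseteq G$.

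I would then claim that $N := N_0$ and $U := U_0$ witness \eqref{eq:wpds-2}. Given $u$ with $\d(u(s), H(s)) \le C$ and $\d(ug^n(s), H(s)) \le C$ for some $n \ge N_0$, the $u$-translate $u(\ell_{[s, g^n(s)]})$ is a $(\lambda, c)$-quasi-geodesic from $u(s)$ to $ug^n(s)$ (as $u$ acts by isometries) containing $ug^{N_0}(s)$ as an interior vertex, since $0 \le N_0 \le n$. Lemma~\ref{lem:quadr-H} then forces $\d(ug^{N_0}(s), H(s)) \le \varkappa + 2\delta + C + \eta = C'$, which combined with $\d(u(s), H(s)) \le C \le C'$ and the choice of $N_0, U_0$ gives $u \in HU_0$, as required. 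The only substantive step is this use of Lemma~\ref{lem:quadr-H} — which is exactly where quasi-convexity of $H(s)$ and hyperbolicity of $S$ enter — and I anticipate no other obstacle.
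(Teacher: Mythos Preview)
Your proposal is correct and follows essentially the same route as the paper. The only cosmetic difference is that the paper invokes $H$-aptness in the form \eqref{eq:wpds-1} and then translates $u$ by an element of $H$ to reduce to $\d(u(s),s)\le C$, whereas you invoke the equivalent form \eqref{eq:wpds-1.5} directly; otherwise the choice of $C'=\varkappa+2\delta+C+\eta$, the use of the translated axis segment $u(\ell_{[s,g^n(s)]})$, and the appeal to Lemma~\ref{lem:quadr-H} are identical.
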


\begin{proof} Clearly \eqref{eq:wpds-2} implies \eqref{eq:wpds-1}, so the sufficiency in the lemma holds. To prove the necessity, suppose that $g$ is $H$-\apt{}, and
consider arbitrary $s \in S$ and $C \ge 0$.

By the assumptions, the $H$-orbit $ H(s)$ is $\eta$-quasi-convex, for some $\eta \ge 0$. Let $\ell$ be a $(\lambda,c)$-quasi-geodesic axis of $g$ based at $s$,
where $\lambda \ge 1$, $c \ge 0$,
and let $\varkappa=\varkappa(\lambda,c)\ge 0$ be the constant provided by Lemma \ref{lem:stab_of_qgeod}. Set $C'=C+\varkappa+2\delta+\eta>0$ and choose $N=N(s,C') \in \N$
and a finite subset $U=U(s,C')\subseteq G$ according to Definition \ref{defn:rwpd}.

Suppose that an element $u \in G$ satisfies
\begin{equation*}
\d(u(s),f(s))\le C~\mbox{ and }\d(ug^n(s), H(s))\le C, \mbox{ for some } f \in H \mbox{ and some } n \ge N.
\end{equation*}
Note that, without loss of generality, we can replace $u$ by $f^{-1}u$, because $f^{-1} H= H$ and $fHU=HU$. Hence we can further assume that
\begin{equation}\label{eq:dist}
\d(u(s),s)\le C~\mbox{ and }\d(ug^n(s),h(s))\le C, \mbox{ for some } h \in H \mbox{ and some } n \ge N.
\end{equation}
Let $p=u(\ell_{[s,g^n(s)]})$. Note that $p$ is $(\lambda,c)$-quasi-geodesic, $p_-=u(s)$, $p_+=ug^n(s)$ and $ug^N(s) \in p$. Therefore, by Lemma \ref{lem:quadr-H},
$\d(ug^N(s), H(s)) \le \varkappa+2\delta+C+\eta=C'$.
Since we also have that $\d(s,u(s))\le C'$, we see that $u$ belongs to the set $\{u \in G \mid \d(s,u(s))\le C', \d(ug^N(s), H(s))\le C' \}$, which is contained in $HU$
by the definition of $N$ and $U$. Hence \eqref{eq:wpds-2} holds, and the proof is finished.
\end{proof}

\begin{prop}\label{prop:rwpds} Let $H$ be a convex cocompact subgroup of $G$ and let  $g\in G$ be a loxodromic WPD element. Then  for all $s\in S$ and $C\ge 0$ there exists $N\in\N$  satisfying
\begin{equation}\label{eq:wpds-cc}
|\{u \in G \mid \exists~n \ge N \mbox{ such that }\d(u(s),s)\le C, \d(ug^n(s), H(s))\le C \}|<\infty.
\end{equation}
In particular, every loxodromic WPD element $g \in G$ is $H$-\apt{}.
\end{prop}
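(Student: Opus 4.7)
The plan is to combine three ingredients: the quasi-convexity of the orbit $H(s)$, a uniform-in-shift form of the WPD hypothesis on $g$, and the properness of the $H$-action on $S$.

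I would start with the geometric setup. Let $\eta \ge 0$ be such that $H(s)$ is $\eta$-quasi-convex (which exists because $H$ is convex cocompact), fix a $(\lambda,c)$-quasi-geodesic axis $\ell$ of $g$ based at $s$, let $\varkappa := \varkappa(\lambda,c)$ be the constant from Lemma~\ref{lem:stab_of_qgeod}, and set $C' := C + \varkappa + 2\delta + \eta$. For any $u$ satisfying the hypotheses of \eqref{eq:wpds-cc} with witness $n$, both endpoints $u(s)$ and $ug^n(s)$ of the $(\lambda,c)$-quasi-geodesic $u(\ell_{[s,g^n(s)]})$ lie within $C$ of $H(s)$: the first because $1 \in H$ so $s \in H(s)$, and the second by hypothesis. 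Lemma~\ref{lem:quadr-H} then forces every intermediate point $ug^k(s)$, $0 \le k \le n$, into the $C'$-neighborhood of $H(s)$.

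Next I would invoke a uniform-in-shift form of the WPD condition: for every $\varepsilon \ge 0$ there exists $N_0 = N_0(\varepsilon) \in \N$ such that for every $m \ge N_0$ the set
\[
W_m^\varepsilon := \bigl\{w \in G \bigm| \d(w(s),s) \le \varepsilon,\ \d(wg^m(s), g^m(s)) \le \varepsilon\bigr\}
\]
is finite. This strengthening of Definition~\ref{defn:wpd} follows by a standard slim-quadrangle comparison of $w\ell_{[s,g^m(s)]}$ with $\ell_{[s,g^m(s)]}$ (cf.\ \cite[Lemma~6.5]{DGO}). Applying it with an $\varepsilon$ chosen sufficiently large in terms of $C$ and $C'$, I obtain $N_0$ and set $N := 2N_0$.

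The heart of the argument is then a pigeonhole. For $u$ with witness $n \ge 2N_0$, the geometric step supplies anchor elements $a_u, b_u \in H$ with $\d(ug^{N_0}(s), a_u(s)) \le C'$ and $\d(ug^{n-N_0}(s), b_u(s)) \le C'$. The plan is to construct from the triple $(u, a_u, b_u)$ a single auxiliary element $w_u \in G$ which, by a direct distance computation using the two anchor bounds, lies in $W_{n-2N_0}^\varepsilon$. Finiteness of $W_{n-2N_0}^\varepsilon$ then controls $w_u$, and properness of $H \curvearrowright S$ applied to the bound on $\d(a_u^{-1}b_u(s), s)$ controls the pair $(a_u, b_u)$ up to a finite ambiguity; undoing the construction of $w_u$ finally confines the possible $u$'s to a finite set.

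The main obstacle is the construction of $w_u$: it must satisfy both WPD constraints with a constant independent of $u$ and $n$. The naive candidate $a_u^{-1}u$ fails because it sends $g^{N_0}(s)$ close to $s$ rather than close to $g^{N_0}(s)$, and because $\d(a_u(s),s)$ grows linearly with $N_0$. The resolution is to use both anchors simultaneously, essentially letting $w_u$ play the role of the $u$-conjugate of $g^{n-2N_0}$ corrected by the $H$-translation $a_u^{-1}b_u$ that bridges the two anchors on $H(s)$; a careful triangle-inequality calculation, iterating the Lemma~\ref{lem:quadr-H} estimates at the points $g^{N_0}(s)$ and $g^{n-N_0}(s)$, then controls the distances $\d(w_u(s), s)$ and $\d(w_u g^{n-2N_0}(s), g^{n-2N_0}(s))$ uniformly in $u$ and $n$.
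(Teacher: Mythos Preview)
Your geometric setup and the use of Lemma~\ref{lem:quadr-H} to place every $ug^k(s)$, $0 \le k \le n$, within $C'$ of $H(s)$ are correct and match the paper. The gap is in the pigeonhole step.

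The problem is the second anchor $b_u$ at position $g^{n-N_0}$: the witness $n$ varies with $u$, and this infects the rest of your plan in two ways. First, your auxiliary element $w_u$ is supposed to land in $W_{n-2N_0}^\varepsilon$, but since $n$ depends on $u$ these are different finite sets for different $u$, and their union over all $n \ge 2N_0$ need not be finite (WPD only gives finiteness of each individual $W_m^\varepsilon$, not a uniform cardinality bound, and certainly not that the union is finite). Second, your claimed bound on $\d(a_u^{-1}b_u(s),s)$ is false: this equals $\d(a_u(s),b_u(s))$, which lies within $2C'$ of $\d(ug^{N_0}(s),ug^{n-N_0}(s))=\d(s,g^{n-2N_0}(s))$ and hence goes to infinity with $n$. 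So properness cannot control the pair $(a_u,b_u)$, and there is no evident way to recover $u$ from finitely much data.

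The paper's resolution is to drop the second anchor entirely and use only a single anchor $f_u \in H$ with $\d(ug^N(s),f_u(s)) \le C'$ at the \emph{fixed} position $g^N$. Then $\d(s,f_u(s)) \le C + \d(s,g^N(s)) + C'$ is bounded independently of $u$, so properness of $H$ leaves only finitely many possible values for $f_u$. After pigeonholing to a common $f$, any two elements $u_1,u_2$ with $f_{u_1}=f_{u_2}=f$ satisfy $\d(u_1^{-1}u_2(s),s)\le 2C$ and $\d(u_1^{-1}u_2 g^N(s),g^N(s))\le 2C'$, placing $u_1^{-1}u_2$ in the \emph{single} WPD set at the fixed exponent $N$. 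Comparing pairs at a fixed $N$ sidesteps both issues above, and incidentally makes your uniform-in-shift strengthening of WPD unnecessary.
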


\begin{proof}

Choose any $s \in S$, any $C\ge 0$, and assume that $ H(s)$ is $\eta$-quasi-convex in $S$, for some $\eta \ge 0$.
Let $\ell$ be a $(\lambda,c)$-quasi-geodesic axis of $g$ based at $s$, and let $\varkappa=\varkappa(\lambda,c)\ge 0$ be the constant given by Lemma \ref{lem:stab_of_qgeod}.

Now, set $\e=2\varkappa+4\delta+ 2C+2\eta$ and
choose $N=N(s,\e)$ according to the definition of the WPD property for $g$ (\cref{defn:wpd}). 
Arguing by contradiction, assume that there exists an infinite sequence of pairwise distinct elements  ${u_1,u_2,\dots} \in G$
such that for each $i \in \N$ there are $h_i \in H$ and $n_i \in \N$ satisfying $n_i \ge N$ and $\d(u_i(s),s)\le C$, $\d(u_ig^{n_i}(s),h_i(s))\le C$.
 
For each $i \in \N$, we can consider geodesic paths $q_{1i}$ and $q_{2i}$ in $S$ joining $u_i(s)$ with $u_ig^{n_i}(s)$, and
$s$ with $h_i(s)$ respectively.
Let $\ell_i$ be the segment of $\ell$ from $s$ to $g^{n_i}(s)$. Then $u_i(\ell_i)$ is a $(\lambda,c)$-quasi-geodesic with same endpoints as the geodesic
side $q_{1i}$. So, by Lemma \ref{lem:quadr-H} there is
$f_i \in H$ such that
\begin{equation}\label{eq:u_ig}
\d(u_ig^N(s), f_i(s)) \le \varkappa+2\delta+C+\eta.
\end{equation}

Observe that for every $i \in \N$ we can estimate
\[\d(s,f_i(s)) \le \d(s,u_i(s))+\d(u_i(s),u_ig^N(s))+\d(u_ig^N(s), f_i(s))\le \varkappa+2\delta+2C+\eta+\d(s,g^N(s)),\]
where the right-hand side is independent of $i$. Consequently, recalling the assumption that $H$ is convex cocompact, we can conclude that
$\{f_i \mid i \in \N\}$ is a finite subset of $H$. Therefore there must exist $f \in H$ and
an infinite subsequence of indices $(i_j)_{j\in \N}$ such that $f_{i_j}=f$ for all $j\in \N$.

Hence, in view of \eqref{eq:u_ig}, for all $j \ge 2$ we have \[\d(u_{i_1}(s),u_{i_j}(s)) \le \d(u_{i_1}(s),s)+\d(s,u_{i_j}(s))\le 2C \le \e, \mbox{ and }\]
\[\d(u_{i_1}g^N(s),u_{i_j}g^N(s))\le \d(u_{i_1}g^N(s),f(s))+\d(f(s),u_{i_j}g^N(s))\le 2(\varkappa+2\delta+C+\eta)= \e.\]

Thus $\d(u_{i_1}^{-1}u_{i_j}(s),s) \le \e$ and $\d(u_{i_1}^{-1}u_{i_j}g^N(s),g^N(s))\le \e$, so, according to the WPD property of $g$ and the choice of $N$, the subset
$\{u_{i_1}^{-1}u_{i_j} \mid j \in \N, j \ge 2\} \subseteq G$ must be finite. Obviously the latter gives a contradiction with the assumption that the elements $u_{i_1},u_{i_2},\dots$ are pairwise distinct. 
\end{proof}

\begin{rem} \label{rem:rwpds->wpds_for_cc}
In fact, for  a convex cocompact subgroup $H$ \eqref{eq:wpds-cc} is equivalent to $g$ being $H$-\apt{}.
\end{rem}
Indeed, if $g$ is $H$-\apt{} and $H$-orbits are quasi-convex in $S$, according to Lemma \ref{lem:equiv_rwpds}, for all $C \ge 0 $ and $s \in S$ there is $N \in \N$ and a finite subset $U \subseteq G$ such that
\[V=\{u \in G \mid \exists~n \ge N \mbox{ such that }\d(u(s),s)\le C, \d(ug^n(s), H(s))\le C \}\subseteq HU.\]
But from convex cocompactness of $H$, we know that for each $u \in G$ there are only finitely many $h \in H$ such that $\d(hu(s),s)\le C$. Therefore
$|V \cap Hu|<\infty$ for every $u \in U$, and hence $|V|<\infty$, i.e., \eqref{eq:wpds-cc} holds.

Now that we have established the existence of $H$-\apt{} elements when $H$ is convex cocompact, our next goal is to use $H$-\apt{} elements to construct elements that are transverse to $H$ (see Definition \ref{def:transv_elem}). The next few lemmas will lay the groundwork for showing transversality.

\begin{lem}\label{lem:w_height} 
Let $H$ be a subgroup of $G$ with quasi-convex orbits and let  $g\in G$ be an $H$-\apt{} loxodromic element. Then there is a finite subset $U_0 \subseteq G$ such that for any $u \in G$ if $u^{-1}Hu \cap \langle g \rangle\neq \{1\}$ then $u \in HU_0$. In particular, only finitely many distinct conjugates of $H$ can intersect $\langle g \rangle$ non-trivially.
\end{lem}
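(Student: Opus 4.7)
The plan is to apply the $H$-aptness of $g$ to $u$ itself via the equivalent formulation in \cref{lem:equiv_rwpds}, after establishing a uniform bound on $\d(u(s),H(s))$ that is independent of $u$.

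\textbf{Setup.} Suppose $u\in G$ satisfies $u^{-1}Hu\cap\langle g\rangle\ne\{1\}$, so that $u^{-1}hu=g^n$ for some $h\in H\setminus\{1\}$ and $n\in\Z\setminus\{0\}$. By \cref{lem:apt-inverse} I may replace $g$ with $g^{-1}$ if necessary and assume $n>0$. Iterating $hu=ug^n$ gives $h^ku=ug^{kn}$ for every $k\in\Z$, and since $h^k\in H$ acts as an isometry of $S$ stabilizing the orbit $H(s)$,
\[
\d\bigl(ug^{kn}(s),H(s)\bigr)=\d\bigl(u(s),h^{-k}H(s)\bigr)=\d\bigl(u(s),H(s)\bigr)=:D_u.
\]
Consequently, every point $ug^{kn}(s)$ on the $(\lambda,c)$-quasi-geodesic axis $u\ell$ of $ugu^{-1}$ (where $\ell$ is a quasi-geodesic axis of $g$ based at $s$) lies at distance exactly $D_u$ from $H(s)$.

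\textbf{Uniform bound on $D_u$.} Let $\varkappa=\varkappa(\lambda,c)$ be the stability constant from \cref{lem:stab_of_qgeod} and let $\eta$ be a quasi-convexity constant for $H(s)$. I consider the sub-quasi-geodesic $q$ of $u\ell$ from $ug^{-kn}(s)$ to $ug^{kn}(s)$, whose endpoints both lie within $D_u$ of $H(s)$. I then apply \cref{lem:quadr-H} with $\e=D_u$ and $x=u(s)\in q$. Because $g$ is loxodromic,
\[
\d(q_-,u(s))=\d(g^{-kn}(s),s)\quad\text{and}\quad \d(u(s),q_+)=\d(g^{kn}(s),s)
\]
both grow linearly in $k$, so for $k$ large enough they exceed $D_u+\varkappa+2\delta$. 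The deep-interior conclusion of \cref{lem:quadr-H} then yields $D_u\le \varkappa+2\delta+\eta=:C_0$, a bound depending only on $H$, $g$, and $s$.

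\textbf{Applying aptness.} By \cref{lem:equiv_rwpds} applied with $C=C_0$, there exist $N\in\N$ and a finite subset $U\subseteq G$ such that every $v\in G$ with $\d(v(s),H(s))\le C_0$ and $\d(vg^m(s),H(s))\le C_0$ for some $m\ge N$ lies in $HU$. Choosing any $k$ with $kn\ge N$ and setting $m=kn$, both conditions hold for $u$ by the setup paragraph, so $u\in HU$. Setting $U_0:=U$ completes the proof of the main claim; the ``in particular'' statement follows because the finitely many cosets in $HU_0$ produce at most finitely many conjugates $u^{-1}Hu$. The only non-mechanical step is the uniform bound on $D_u$, where the crucial observation is that $u\ell$ meets $H(s)$ (to within $D_u$) at an infinite sequence of evenly spaced points, forcing $u(s)$ itself close to $H(s)$ via \cref{lem:quadr-H}.
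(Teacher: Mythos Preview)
Your proof is correct and follows essentially the same strategy as the paper's: use the fact that the $u$-translate of the $g$-axis returns to the $H$-orbit at the evenly spaced points $ug^{kn}(s)$, apply quasi-convexity (via \cref{lem:quadr-H}) to force interior points of the axis uniformly close to $H(s)$, and then invoke $H$-aptness to trap $u$ in finitely many $H$-cosets.

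Two small remarks. First, the step ``replace $g$ with $g^{-1}$'' is unnecessary and, as phrased, slightly dangerous: $U_0$ must be chosen once for all $u$, so you cannot change $g$ depending on $u$. The fix is trivial---since $u^{-1}Hu$ is a subgroup, $g^n\in u^{-1}Hu$ forces $g^{|n|}\in u^{-1}Hu$, so you may take $n>0$ without touching $g$. Second, your organization differs mildly from the paper's: you first establish the uniform bound $D_u\le C_0$ and then feed $u$ directly into \cref{lem:equiv_rwpds}, whereas the paper works from the basic apt definition and applies it to an auxiliary element $v=h_1^{-1}ug^{mL}$ (arranged to satisfy $\d(v(s),s)\le C$), and only afterwards recovers $u\in HU_0$ from $ug^{mL}\in Hu$. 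Your route is a touch cleaner since it leverages the already-proved \cref{lem:equiv_rwpds} rather than reproving part of it inline.
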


\begin{proof} Fix any $s \in S$ and suppose that the orbit $ H(s)$ is $\eta$-quasi-convex in $S$, for some $\eta \ge 0$. Let $\ell$ be a $(\lambda,c)$-quasi-geodesic axis of $g$ based at $s$,
and let $\varkappa=\varkappa(\lambda,c)$ be the constant from the claim of Lemma \ref{lem:stab_of_qgeod}. Finally, we set  $C=\varkappa+2\delta+\eta$ and let
$N=N(s,C) \in\N$ be the number and $U_0=U(s,C) \subseteq G$ be the finite subset given by  Definition \ref{defn:rwpd}.

Consider any element $u \in G$ such that $g^m \in u^{-1}Hu$, for some $m \in \N$, and let $\e=\d(u(s),s)$ and $\e'=\e+\varkappa+2\delta$.

Since $g$ is loxodromic, there is $L \in \N$ such that $mL \ge N$ and
\begin{equation}\label{eq:g^mL-far}
\begin{multlined}
\d(g^{mL}(s),g^{2mL}(s))=\d(s,g^{mL}(s))>\e',~\d(s,g^{mL+N}(s))>\e' \mbox{ and } \\
\d(g^{mL+N}(s),g^{2mL}(s))=\d(g^N(s),g^{mL}(s))>\e'.
\end{multlined}
\end{equation}
Let $q=u(\ell_{[s,g^{2mL}(s)]})$ be the translate of the segment of $\ell$ from $s$ to $g^{2mL}(s)$. Then $q$ is $(\lambda,c)$-quasi-geodesic
and $ug^{mL}(s), ug^{mL+N}(s) \in q$. Moreover, $\d(q_-,s)=\e$ and, since $h=ug^{2mL}u^{-1} \in H$, we see that
\[\d(q_+, H(s)) = \d(ug^{2mL}(s),  H(s))=\d(hu(s), H(s))=\d(u(s), H(s)) \le \d(u(s),s)= \e.\]
Evidently, \eqref{eq:g^mL-far} shows that both $ug^{mL}(s)$ and $ug^{mL+N}(s)$ are situated on $q$ more than $\e'$ away from its endpoints, hence we
 can apply Lemma \ref{lem:quadr-H} to find $h_1,h_2 \in H$ such that $\d(ug^{mL}(s),h_1(s)) \le C$ and $\d(ug^{mL+N}(s),h_2(s)) \le C$.

Therefore, for $v=h_1^{-1}ug^{mL}$, we have $\d(v(s),s) \le C$ and $\d(vg^N(s), H(s)) \le C$. Hence $v \in HU_0$, according to our choice of $N$ and $U_0$.
It remains to observe that $ug^{mL}=h_3u$ for some $h_3 \in H$, as $ug^{mL}u^{-1} \in H$. Thus $u=h_3^{-1}h_1v \in HU_0$, and the lemma is proved.
\end{proof}

\begin{lem} \label{lem:inter_conj} Let $H$ be a subgroup of $G$ with quasi-convex orbits and let  $g\in G$ be an $H$-\apt{} loxodromic element.
Then for all $s \in S$ and  $D>0$ there exists a constant $M \in \N$ such that the following holds.
If $u \in G$ and $n \in \Z$ satisfy  $\d(u(s),  H(s)) \le D$, $\d(ug^n(s), H(s))\le D$ and $|n| \ge M$, then $u^{-1}Hu \cap \langle g \rangle \neq \{1\}$.
In particular, $u \in HU_0$, where $U_0$ is the finite subset of $G$ given by Lemma \ref{lem:w_height}.
\end{lem}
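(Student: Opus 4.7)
The plan is to exploit the hypothesis to produce many translates of the form $u g^{iN_0}$ that all lie close to $H(s)$, and then use pigeonhole via the $H$-\apt{} property to find two of them differing by an element of $H$; this will supply a non-trivial power of $g$ inside $u^{-1}Hu$. By \cref{lem:apt-inverse}, $g^{-1}$ is also $H$-\apt{}, so it suffices to treat the case $n>0$ and then take a maximum of the resulting thresholds.

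Fix a $(\lambda,c)$-quasi-geodesic axis $\ell$ of $g$ based at $s$, let $\varkappa=\varkappa(\lambda,c)$ be the stability constant from \cref{lem:stab_of_qgeod}, and let $\eta\ge 0$ be such that $H(s)$ is $\eta$-quasi-convex. Setting $D' := \varkappa+2\delta+D+\eta$, the translate $q:=u(\ell_{[s,g^n(s)]})$ is a $(\lambda,c)$-quasi-geodesic whose endpoints $u(s)$ and $u g^n(s)$ are within $D$ of $H(s)$, so \cref{lem:quadr-H} implies that every point of $q$ is within $D'$ of $H(s)$. In particular, $\d(u g^i(s),H(s))\le D'$ for every integer $0\le i\le n$.

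Now apply \cref{lem:equiv_rwpds} with the constant $D'$ to obtain $N_0 = N(s,D')\in\N$ and a finite subset $U'=U(s,D')\subseteq G$ such that every $v\in G$ with $\d(v(s),H(s))\le D'$ and $\d(v g^m(s),H(s))\le D'$, for some $m\ge N_0$, satisfies $v\in HU'$. Put $M_+ := (|U'|+1)N_0$ and assume $n\ge M_+$, so that $k:=\lfloor n/N_0\rfloor \ge |U'|+1$. For $i=0,1,\dots,k-1$, set $u_i := u g^{iN_0}$; since $(i+1)N_0\le kN_0\le n$, both $u_i(s)=u g^{iN_0}(s)$ and $u_i g^{N_0}(s)=u g^{(i+1)N_0}(s)$ lie on $q$ and are therefore within $D'$ of $H(s)$. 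Hence $u_i\in HU'$, and we may write $u_i = h_i w_i$ with $h_i\in H$ and $w_i\in U'$. Pigeonhole yields indices $0\le i<j\le k-1$ with $w_i=w_j$, and then
\[
u g^{(j-i)N_0} u^{-1} = u_j u_i^{-1} = h_j h_i^{-1} \in H,
\]
so $g^{(j-i)N_0}$ is a non-trivial element of $u^{-1}Hu \cap \langle g\rangle$.

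The case $n\le -M_-$ is handled identically after replacing $g$ with $g^{-1}$, using \cref{lem:apt-inverse} and noting that $\langle g^{-1}\rangle=\langle g\rangle$; one takes $M:=\max\{M_+,M_-\}$. The final clause, $u \in HU_0$, then follows immediately from \cref{lem:w_height}. The only genuinely fiddly point is the bookkeeping ensuring that each $u g^{iN_0}(s)$ with $0\le i\le k$ actually lies on the quasi-geodesic $q$, so that \cref{lem:quadr-H} controls its distance to $H(s)$; beyond this the argument is a straightforward pigeonhole.
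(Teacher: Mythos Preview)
Your argument is correct and follows essentially the same route as the paper: use \cref{lem:quadr-H} to show that the points $u g^{i}(s)$ along $q$ lie close to $H(s)$, apply \cref{lem:equiv_rwpds} to force the relevant translates into finitely many right $H$-cosets, and finish by pigeonhole. The only differences are cosmetic: the paper takes consecutive translates $u_i=ug^i$, $i=0,\dots,|U|$, and uses $u_i g^{n-i}(s)=ug^n(s)$ directly (yielding the smaller threshold $M=N+|U|$), and it handles $n<0$ by the substitution $u'=ug^n$ rather than by invoking \cref{lem:apt-inverse}.
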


\begin{proof} Fix any $s \in S$ and $D>0$. Let  the orbit $ H(s)$ be $\eta$-quasi-convex in $S$, for some $\eta \ge 0$, let $\ell$ be a
$(\lambda,c)$-quasi-geodesic axis of $g$ based at $s$,
$\lambda \ge 1$, $c \ge 0$, and let $\varkappa=\varkappa(\lambda,c)\ge 0$ be the constant given by Lemma \ref{lem:stab_of_qgeod}.
Set $C=\varkappa+2\delta+D+\eta$ and let $N=N(s,C)\in \N$ and the finite subset $U=U(s,C) \subseteq G$ be provided by Lemma \ref{lem:equiv_rwpds}. Choose
$M=N+|U| \in \N$ and suppose that $\d(u(s),  H(s)) \le D$, $\d(ug^n(s), H(s))\le D$, for some $u \in G$ and $n \in \Z$ with $|n| \ge M$.

Note that if $n <0$, then for $u'=ug^n$ we have $\d(u'(s),  H(s)) \le D$, $\d(u'g^{-n}(s), H(s))\le D$ and $-n \ge M$, and the conclusion
$u'^{-1} H u' \cap \langle g \rangle \neq \{1\}$ would immediately yield $u^{-1}Hu \cap \langle g \rangle \neq \{1\}$, as $g^n\langle g \rangle g^{-n}=\langle g \rangle$.
Therefore we can further assume that
\begin{equation} \label{eq:inter_conj-1}
\d(u(s),  H(s)) \le D \text{ and } \d(ug^n(s), H(s))\le D, \mbox{ for some } u \in G \mbox{ and }  n \ge M.
\end{equation}

Let $q=u(\ell[s,g^n(s)])$ be the translate of a segment of $\ell$, so that $q_-=u(s)$ and $q_+=ug^n(s)$. For each
$i=0,1,\dots,|U|$, let $u_i=ug^i \in G$. Then  $u_i(s) \in q$, so $\d(u_i(s), H(s)) \le \varkappa+2\delta+D+\eta=C$, by Lemma~\ref{lem:quadr-H}.
On the other hand, $\d(u_ig^{n-i}(s), H(s)) \le D \le C$ by \eqref{eq:inter_conj-1}, where $n-i \ge M-i \ge N$. Thus, according to Lemma \ref{lem:equiv_rwpds}, $u_i \in HU$, for each
$i=0,1,\dots,|U|$. It follows that there exist indices $i,j$, $0\le i<j\le |U|$, such that $u_j \in Hu_i$. After recalling that $u_i=ug^i$ and $u_j=ug^j$, we get
$ug^{j-i}u^{-1}=u_ju_i^{-1} \in H$. Since $j-i>0$ and $g$ has infinite order we can conclude that $\langle g \rangle \cap u^{-1}Hu \neq \{1\}$, as required.

The last claim of the lemma is an immediate consequence of Lemma \ref{lem:w_height}.
\end{proof}


\begin{lem} \label{lem:gr_prod-bound}  Let $H$ be a subgroup of $G$ with quasi-convex orbits and let  $g\in G$ be an $H$-\apt{} loxodromic element satisfying $ H \cap \langle g \rangle=\{1\}$.
 Then for each $s \in S$ there is a constant $A \ge 0$ such that
$\bigl( g^k(s) \,|\,h(s) \bigr)_s\le A$, for all $k \in \Z$ and $h \in H$.
\end{lem}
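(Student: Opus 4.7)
The plan is to argue by contradiction. Suppose no such constant $A$ exists; then there are sequences $(k_n)$ in $\Z$ and $(h_n)$ in $H$ with $\bigl(g^{k_n}(s)\,|\,h_n(s)\bigr)_s\to\infty$. Since $(g^k(s)\,|\,h(s))_s\le \d(s,g^k(s))$, the Gromov products stay bounded once $k$ is fixed, so necessarily $|k_n|\to\infty$. Because $g^{-1}$ is also $H$-\apt{} by Lemma~\ref{lem:apt-inverse} and $\langle g^{-1}\rangle=\langle g\rangle$, after passing to a subsequence I may assume $k_n\to +\infty$.

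The next step is to convert largeness of the Gromov product into a geometric statement about an axis of $g$. Fix a $(\lambda,c)$-quasi-geodesic axis $\ell$ of $g$ based at $s$, and let $\varkappa$ be the stability constant of Lemma~\ref{lem:stab_of_qgeod}. Writing $R_n=(g^{k_n}(s)\,|\,h_n(s))_s$, a standard $\delta$-hyperbolicity argument shows that the initial arcs of $[s,g^{k_n}(s)]$ and $[s,h_n(s)]$ of length roughly $R_n$ stay within $2\delta$ of each other. The $\eta$-quasi-convexity of the orbit $H(s)$ then places the initial arc of $[s,g^{k_n}(s)]$ in the $(2\delta+\eta)$-neighborhood of $H(s)$, and applying Lemma~\ref{lem:stab_of_qgeod} to transfer from $[s,g^{k_n}(s)]$ to $\ell_{[s,g^{k_n}(s)]}$ finally gives that an initial arc of $\ell$ of length tending to infinity with $n$ lies in the $D$-neighborhood of $H(s)$, for $D=\varkappa+2\delta+\eta$.

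Since $\ell$ is the concatenation of the translates $g^j([s,g(s)])$, the orbit points $g^j(s)$ lie along $\ell$ with gap $\d(s,g(s))$, so once $n$ is large enough the trapped arc contains many such points; in particular I can extract integers $0\le j_1<j_2\le k_n$ with $j_2-j_1\ge M$ and $\d(g^{j_i}(s),H(s))\le D$ for $i=1,2$, where $M=M(s,D)$ is the constant provided by Lemma~\ref{lem:inter_conj}. Applying that lemma to $u=g^{j_1}$ and exponent $n=j_2-j_1$ yields $g^{-j_1}Hg^{j_1}\cap\langle g\rangle\ne\{1\}$; conjugating by $g^{j_1}$ and using $g^{j_1}\langle g\rangle g^{-j_1}=\langle g\rangle$ gives $H\cap\langle g\rangle\ne\{1\}$, contradicting the hypothesis.

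The main obstacle I anticipate is the quantitative coordination of the constants, making sure the portion of $\ell$ forced into the $D$-neighborhood of $H(s)$ has length exceeding $(M+1)\,\d(s,g(s))$, which is what guarantees the existence of the pair $(j_1,j_2)$ with $j_2-j_1\ge M$. Beyond this bookkeeping, the argument is essentially a direct synthesis of Lemmas~\ref{lem:apt-inverse}, \ref{lem:stab_of_qgeod} and~\ref{lem:inter_conj}.
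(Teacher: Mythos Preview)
Your argument is correct and follows essentially the same route as the paper's proof: argue by contradiction, reduce to positive $k$ via Lemma~\ref{lem:apt-inverse}, use the Gromov product together with quasi-convexity of $H(s)$ and stability of quasi-geodesics to place orbit points $g^i(s)$ within a fixed $D$-neighborhood of $H(s)$, and then invoke Lemma~\ref{lem:inter_conj} to contradict $H\cap\langle g\rangle=\{1\}$. The only cosmetic difference is that the paper shows $\d(g^i(s),H(s))\le D$ for \emph{every} $i\in\N$ and applies Lemma~\ref{lem:inter_conj} directly with $u=1$, whereas you extract a pair $j_1<j_2$ and apply it with $u=g^{j_1}$; your extra conjugation step is harmless but unnecessary.
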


\begin{proof} Suppose, on the contrary, that $\sup\left\{\bigl( g^k(s) \,|\,h(s) \bigr)_s \mid k \in \Z, h \in H\right\}=\infty$ for some $s \in S$.
Then, after replacing $g$ with $g^{-1}$
if necessary (which can be done by Lemma~\ref{lem:apt-inverse}), we can assume that
\begin{equation}\label{eq:sup-inf}
\sup\left\{\left.\bigl(\bigl. g^k(s) \,\bigr| \, h(s)\bigr)_s \,\right|\, k \in \N, h \in H\right\}=\infty.
\end{equation}

By the assumptions, the $H$-orbit $ H(s)$ is $\eta$-quasi-convex, for some $\eta \ge 0$, and $g$ has a $(\lambda,c)$-quasi-geodesic axis $\ell$, based at $s$,
for some $\lambda \ge 1$ and $c \ge 0$. Let $\varkappa=\varkappa(\lambda,c)$ be the constant given by Lemma \ref{lem:stab_of_qgeod}.

Take any $i \in \N$. The hypothesis \eqref{eq:sup-inf} implies that  there are $k \in \N$ and $h\in H$ such that  $k \ge i$ and
$\left(g^k(s)|h(s)\right)_s \ge \d(s,g^i(s))+\varkappa$. Let $q_1$ and $q_2$ be some geodesics from $s$ to $g^k(s)$ and from $s$ to $h(s)$ in $S$, respectively.
Since $g^i(s)$ belongs to the $(\lambda,c)$-quasi-geodesic path $\ell_{[s,g^k(s)]}$, which has the same endpoints as $q_1$, we know that $\d(g^i(s),x) \le \varkappa$
for some $x \in q_1$. Therefore  \[\d(s,x) \le \d(s,g^i(s))+\d(g^i(s),x)\le \bigl(\bigl. g^k(s)\,\bigr| \, h(s)\bigr)_s.\]
It follows that $\d(x,q_2) \le 6\delta$, because  any geodesic triangle with vertices $s$, $g^k(s)$ and $h(s)$ is \emph{$6\delta$-thin} in $S$
(see the proof of the implication (1)$\Rightarrow$(2) in \cite[Proposition~2.1]{Mih}). After recalling that $q_2$ is contained in the $\eta$-neighborhood of $ H(s)$,
we can conclude that
\begin{equation*}\label{eq:g-gamma}
\d(g^i(s), H(s)) \le D, ~\mbox{ for all } i \in \N, \mbox{ where } D=\varkappa+6\delta+\eta.
\end{equation*}

Therefore we can apply Lemma \ref{lem:inter_conj} (in the case $u=1$) to deduce that $H \cap \langle g \rangle \neq \{1\}$, which contradicts our assumption.
Thus $\sup\left\{\bigl( g^k(s) \,|\,h(s) \bigr)_s \mid k \in \Z, h \in H\right\}<\infty$, and  the lemma is proved.
\end{proof}

\begin{lem}\label{lem:gr_prod-spec_case} Let $g \in G$ be a loxodromic WPD element, $a \in G \setminus E_G(g)$ and $s \in S$. Then there is a constant $B \ge 0$ such that
$\bigl( a^{-1}g^{-n}(s) \,| \,g^n a(s)\bigr)_s\le B$, for all $n \in \Z$.
\end{lem}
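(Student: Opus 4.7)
The plan is to reduce the claim to \cref{lem:gr_prod-bound}, applied to the cyclic subgroup $H := a^{-1} \langle g\rangle a \leqslant G$, and then perform a small basepoint shift at the end.

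First I would verify the three hypotheses of \cref{lem:gr_prod-bound} for this choice of $H$. Since $g$ is loxodromic, $\langle g \rangle$ is convex cocompact, so by \cref{lem:cong_of_cc_is_cc} the conjugate $H = \langle a^{-1}ga \rangle$ is also convex cocompact; in particular $H$ has quasi-convex orbits in $S$. Next, \cref{prop:rwpds} applies to $H$ (which is convex cocompact) and the loxodromic WPD element $g$, producing that $g$ is $H$-\apt{}. Finally, $H \cap \langle g \rangle = \{1\}$: a non-identity element of the intersection would be an equality $a^{-1}g^m a = g^n$ with $m, n \in \Z \setminus \{0\}$, which by the description of $E_G(g)$ in \cref{lem:elemrem} would force $a \in E_G(g)$, contradicting the hypothesis that $a \in G \setminus E_G(g)$.

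Hence \cref{lem:gr_prod-bound} supplies a constant $A \ge 0$ with
\[
\bigl(g^k(s)\,\bigl|\,h(s)\bigr)_s \le A \qquad \text{for all } k \in \Z \text{ and all } h \in H.
\]
Specializing $k = n$ and $h = a^{-1}g^{-n}a \in H$ (so $h(s) = a^{-1}g^{-n}a(s)$), this yields $\bigl(g^n(s)\,\bigl|\,a^{-1}g^{-n}a(s)\bigr)_s \le A$ for every $n \in \Z$.

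It remains to replace $g^n(s)$ by $g^n a(s)$ and $a^{-1}g^{-n}a(s)$ by $a^{-1}g^{-n}(s)$. Setting $C := \d(s, a(s))$, the fact that $G$ acts on $S$ by isometries gives $\d(g^n(s), g^n a(s)) = \d(s, a(s)) = C$ and $\d(a^{-1}g^{-n}a(s), a^{-1}g^{-n}(s)) = \d(a(s), s) = C$. Combining this with the elementary Lipschitz property of the Gromov product $|(x|y)_s - (x'|y')_s| \le \d(x,x') + \d(y,y')$, which is immediate from the defining formula \eqref{eq:Gr-prod-def}, I would obtain $\bigl(g^n a(s)\,\bigl|\,a^{-1}g^{-n}(s)\bigr)_s \le A + 2C$. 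Thus $B := A + 2\d(s, a(s))$ is the desired constant.

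The only mildly subtle point is identifying the correct subgroup $H = a^{-1}\langle g\rangle a$ and observing that the hypothesis $a \notin E_G(g)$ translates precisely to the triviality of $H \cap \langle g \rangle$ needed to invoke \cref{lem:gr_prod-bound}; everything else is a direct application of that lemma followed by a bounded perturbation of the two entries of the Gromov product.
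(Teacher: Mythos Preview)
Your proof is correct and follows essentially the same route as the paper: define $H=\langle a^{-1}ga\rangle$, verify via \cref{prop:rwpds} and the description of $E_G(g)$ that \cref{lem:gr_prod-bound} applies, and then shift the two arguments of the Gromov product by $\d(s,a(s))$ each to obtain $B=A+2\d(s,a(s))$. The only cosmetic difference is that the paper carries out the final basepoint shift by expanding \eqref{eq:Gr-prod-def} and applying the triangle inequality term by term, whereas you invoke the equivalent Lipschitz estimate for the Gromov product directly.
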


\begin{proof} Since $g$ is loxodromic, the same is true for the element $a^{-1}ga \in G$. It follows that the subgroup $H=\langle a^{-1}ga \rangle$ is convex cocompact, and so the element $g$ is $H$-\apt{} by \cref{prop:rwpds}.

Since $a \notin E_G(g)$, the definition of $E_G(g)$ given in \eqref{eq:def_of_elem} implies that $H \cap \langle g \rangle=\{1\}$.
Therefore we can use Lemma \ref{lem:gr_prod-bound} to find $A \ge 0$ such that
\begin{equation}\label{eq:prod-bound-A}
\bigl( \bigl. g^k(s)\,\bigr| \,a^{-1}g^l a(s) \bigr)_s\le A,~\mbox{ for all }k,l \in \Z.
\end{equation}

Recall that, according to \eqref{eq:Gr-prod-def}, for any $n \in \Z$ we have
\begin{equation}\label{eq:prod-bound-C}
\bigl(\bigl. a^{-1}g^{-n}(s)\, \bigr|\,g^n a(s)\bigr)_s=\frac12 \Bigl(\d\bigl(a^{-1}g^{-n}(s),s\bigr)+\d\bigl(g^n a(s),s\bigr)-\d\bigl(a^{-1}g^{-n}(s),g^n a(s)\bigr) \Bigr).
\end{equation}
We let $C=\d(s,a(s))$ and apply the triangle inequality to obtain the following estimate:
\[ \d\bigl(a^{-1}g^{-n}(s),s\bigr) \le \d\bigl(a^{-1}g^{-n}(s),a^{-1}g^{-n}a(s)\bigr)+\d\bigl(a^{-1}g^{-n}a(s),s\bigr)=\d\bigl(a^{-1}g^{-n}a(s),s\bigr)+C.
\]
Similarly, we deduce that $\d\bigl(g^n a(s),s\bigr)\le \d\bigl(g^n (s),s\bigr)+C$ and
$\d\bigl(a^{-1}g^{-n}(s),g^n a(s)\bigr) \ge \d\bigl(a^{-1}g^{-n}a(s),g^n (s)\bigr)-2C$.
Combining these inequalities with \eqref{eq:prod-bound-C} and \eqref{eq:prod-bound-A}, we get
\begin{align*}\bigl(\bigl. a^{-1}g^{-n}(s)\, \bigr|\,g^n a(s)\bigr)_s &\le\frac12 \Bigl(\d\bigl(a^{-1}g^{-n}a(s),s\bigr)+\d\bigl(g^n(s),s\bigr)-
\d\bigl(a^{-1}g^{-n}a(s),g^n (s)\bigr)+4C \Bigr)\\
         &=\bigl(\bigl. g^{n}(s)\, \bigr|\, a^{-1}g^{-n}a(s)\bigr)_s +2C \le A+2C.
\end{align*}
Since the above is true for every $n \in \Z$, the claim of the lemma holds for $B=A+2C$.
\end{proof}

\section{Transverse elements}\label{sec:trans}

\begin{defn} \label{def:transv_elem}
Let $G$ be a group acting on a metric space $(S,\d)$ and let $s \in S$ be any point.
We will say that an element $f \in G$ is \emph{transverse to a subset $Y\subseteq  G$} (with respect to the
given action) if for every $E \ge  0$ there exists $K\in\N$ such that for all $v \in G$ we have
\begin{equation}\label{eq:trans}
\left|\{m \in \Z \mid \d(f^m(s),v Y(s)) \le E\}   \right|\leq K.
\end{equation}
\end{defn}

It is easy to see that the above definition does not depend on the choice of $s \in S$. We will always be applying this definition in the setting where $S$ is a hyperbolic metric space, $f$ is a loxodromic element and $Y$ has quasi-convex orbits. In this situation, the following lemma shows that Definition \ref{def:transv_elem} is equivalent to \cite[Definition 2.7]{AH21}.

\begin{lem}\label{lem:transverse-equiv_def}
Let $G$ be a group acting on a hyperbolic metric space $(S,\d)$, let $Y\subseteq G$, and let $s \in S$. Let $f\in G$ be a loxodromic element and let $\ell$ be a quasi-geodesic axis for $f$ based at $s$. Suppose that $Y(s)$ is quasi-convex in $S$. Then $f$ is transverse to $Y$ if and only if for all $E\geq 0$ there exists $M\geq 0$ such that for every $v\in G$,
\begin{equation}\label{eq:trans_diam_bound}
\mathrm{Diam}\left(\{x\in \ell \mid \d(x, v Y(s)) \le E\}\right)\leq M.
\end{equation}
\end{lem}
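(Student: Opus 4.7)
The plan is to prove the two implications separately. The direction \eqref{eq:trans_diam_bound}$\Rightarrow$\eqref{eq:trans} is immediate: each orbit point $f^m(s)$ lies on $\ell$, so every integer counted in \eqref{eq:trans} contributes a point of $\ell$ inside the $E$-neighborhood of $vY(s)$, and the diameter bound $M$ confines all such orbit points to a set of diameter $\le M$; since $f$ is loxodromic, $m\mapsto f^m(s)$ is a quasi-isometric embedding of $\Z$ into $S$, so only boundedly many integers can have their images in such a set, yielding the desired bound $K$.

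For the converse, I fix $E\ge 0$ and take arbitrary $x_1,x_2\in\ell$ with $\d(x_i,vY(s))\le E$, aiming to bound $\d(x_1,x_2)$ by a constant independent of $v$. Set $L=\d(s,f(s))$. Since $\ell$ is a concatenation of the translates $f^n([s,f(s)])$, every point of $\ell$ is within $L$ of some orbit point, so I can choose integers $m_1,m_2$ with $\d(x_i,f^{m_i}(s))\le L$, and then $\d(f^{m_i}(s),vY(s))\le E+L$. The heart of the argument is to promote this endpoint bound into a uniform bound on the entire subarc $\ell_{[f^{m_1}(s),f^{m_2}(s)]}$. Choose $\eta$ so that $Y(s)$ is $\eta$-quasi-convex; then $vY(s)$ is also $\eta$-quasi-convex, since $v$ acts by isometries. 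The subarc is a $(\lambda,c)$-quasi-geodesic whose endpoints lie within $E+L$ of the $\eta$-quasi-convex set $vY(s)$, so the quadrangle argument in the proof of Lemma~\ref{lem:quadr-H}, together with Lemma~\ref{lem:stab_of_qgeod}, shows that every point on this subarc lies within a constant $C_1=\varkappa+2\delta+(E+L)+\eta$ of $vY(s)$, where $\varkappa=\varkappa(\lambda,c)$.

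In particular, every orbit point $f^m(s)$ with $m$ between $m_1$ and $m_2$ satisfies $\d(f^m(s),vY(s))\le C_1$. Applying hypothesis \eqref{eq:trans} with parameter $C_1$ in place of $E$ produces a uniform bound $K_1$ on the number of such integers $m$, so $|m_1-m_2|\le K_1$, and the triangle inequality finishes the proof with $M:=(K_1+2)L$. The only subtlety, which is the main (and quite minor) obstacle, is that Lemma~\ref{lem:quadr-H} is stated for orbits of subgroups whereas $vY(s)$ is merely a quasi-convex translate of such an orbit; this causes no real difficulty since the proof of that lemma uses only quasi-convexity of the target set, and if preferred one can instead derive the needed statement directly from stability of quasi-geodesics and slimness of geodesic quadrangles.
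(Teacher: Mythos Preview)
Your proof is correct and follows essentially the same route as the paper's own proof: both directions are handled by the same ideas, with the only cosmetic difference being that you invoke Lemma~\ref{lem:quadr-H} (noting, correctly, that its proof only uses quasi-convexity of the target set) where the paper unfolds the same slim-quadrangle argument inline, and you phrase the easy direction via the quasi-isometric embedding property of $m\mapsto f^m(s)$ where the paper writes out the explicit lower bound $\frac{1}{\lambda}(L(K-1)-c)$.
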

\begin{proof}
 Fix constants $\delta$, $\lambda$, $c$ and $\eta$ such that $S$ is $\delta$-hyperbolic, $\ell$ is a $(\lambda, c)$ quasi-geodesic,  $Y(s)$ is $\eta$-quasi-convex, and set $L=\d(s, f(s))>0$. 
 
Suppose that $f$ is transverse to $Y$ in the sense of Definition~\ref{def:transv_elem}. Let $E\geq 0$, $v\in G$ and suppose that $x$ and $y$ are points on $\ell$ such that $\d(x, v Y(s)) \le E$ and $\d(y, v Y(s)) \le E$. Then for some integers $m$ and $n$, $\d(x, f^m(s))\leq L$ and $\d(y, f^n(s))\leq L$. Hence we have that $\d(f^m(s), v Y(s))\leq L+E$ and $\d(f^n(s), v Y(s))\leq L+E$. 
Let $p$ be a geodesic connecting $f^m(s)$ with $f^n(s)$ in $S$. If $i$ is any integer between $m$ and $n$, then $\d(f^i(s),p) \le\varkappa$, where $\varkappa \ge 0$ is the constant from Lemma~\ref{lem:stab_of_qgeod}.
Since $v Y(s)$ is $\eta$-quasi-convex and geodesic quadrilaterals in $S$ are $2\delta$-slim, $p$ is contained in the $(\eta+2\delta+L+E)$-neighborhood of $vY(s)$. Therefore, $\d(f^i(s), v Y(s))\leq E'$, where $E'=\eta+2\delta+L+E+\varkappa$. By Definition \ref{def:transv_elem}, there is a constant $K'$ such that 
\[\left|\{i \in \Z \mid \d(f^i(s),v Y(s)) \le E'\}   \right|\leq K',\] which implies that $|m-n| <K'$. This means that $\d(f^n(s), f^m(s))\leq LK'$, and hence $\d(x, y)\leq LK'+2L$.
 
 This completes one direction of the proof. For the other direction, we note that if $\left|\{m \in \Z \mid \d(f^m(s),v Y(s)) \le E\}   \right|\ge K$, then there are integers $m$ and $n$ with $\d(f^m(s),v Y(s)) \le E$, $\d(f^n(s),v Y(s)) \le E$ and $|m-n|\ge  K-1$. This implies that $\d(f^n(s), f^m(s))\geq \frac{1}{\lambda}(L(K-1)-c)$, hence  \[\mathrm{Diam}\left(\{x\in \ell \mid \d(x, v Y(s)) \le E\}\right)\geq \frac{1}{\lambda}(L(K-1)-c). \qedhere\]
\end{proof}

Here are a few basic properties of this notion of transversality.
\begin{lem}\label{lem:transuc}
Let $G$ be a group acting on a metric space $(S,\d)$, let $f, g
\in G$ and let $Y_1, Y_2$ be subsets of $G$. If $f$ is transverse to $Y_1$ and $Y_2$ then $f$ is transverse to $Y_1\cup Y_2$.

\end{lem}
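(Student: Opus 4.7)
The plan is essentially a one-step decomposition. Fix any basepoint $s \in S$. The key observation is that for any $v \in G$ and any subsets $Y_1, Y_2 \subseteq G$ we have the set-theoretic identity
\[
v(Y_1 \cup Y_2)(s) \;=\; vY_1(s) \cup vY_2(s),
\]
so a point $x \in S$ lies within distance $E$ of $v(Y_1 \cup Y_2)(s)$ if and only if it lies within distance $E$ of $vY_1(s)$ or of $vY_2(s)$.

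Given $E \ge 0$, apply the transversality hypothesis to $Y_1$ and $Y_2$ separately to obtain constants $K_1, K_2 \in \N$ such that
\[
\bigl|\{m \in \Z \mid \d(f^m(s), vY_i(s)) \le E\}\bigr| \le K_i,
\]
for all $v \in G$ and $i=1,2$. By the decomposition above,
\[
\{m \in \Z \mid \d(f^m(s), v(Y_1 \cup Y_2)(s)) \le E\} = \bigcup_{i=1}^2 \{m \in \Z \mid \d(f^m(s), vY_i(s)) \le E\},
\]
and hence has cardinality at most $K_1 + K_2$. Taking $K := K_1 + K_2$ verifies the defining condition from Definition~\ref{def:transv_elem} for $Y_1 \cup Y_2$.

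There is no real obstacle here: the lemma is immediate once one unfolds the definition and uses that $v$ distributes over unions. No hyperbolicity, WPD, or quasi-convexity hypotheses are needed, and the argument extends verbatim to any finite union of subsets by induction on the number of pieces.
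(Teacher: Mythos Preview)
Your proof is correct and follows essentially the same approach as the paper: both arguments rest on the identity $v(Y_1\cup Y_2)(s)=vY_1(s)\cup vY_2(s)$ and bound the resulting set of integers by the sum (or twice the max) of the two individual bounds. The only cosmetic difference is that the paper takes a common constant $K$ for both $Y_1$ and $Y_2$ and obtains the bound $2K$, whereas you keep separate constants $K_1,K_2$ and obtain $K_1+K_2$.
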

\begin{proof}
Let $E>0$, and let $K$ be a constant such that for all $\nu\in G$,
\[
\left|\{m \in \Z \mid \d(f^m(s),v Y_1(s)) \le E\}   \right|\leq K
\]
and
\[
\left|\{m \in \Z \mid \d(f^m(s),v Y_2(s)) \le E\}   \right|\leq K.
\]

Now fix $\nu\in G$, and observe that if $\d(f^m(s), v(Y_1\cup Y_2)(s)) \le E$, then either $\d(f^m(s),v Y_1(s)) \le E$ or $\d(f^m(s),v Y_2(s)) \le E$. Hence, 

\[
\left|\{m \in \Z \mid \d(f^m(s),v(Y_1\cup Y_2)(s)) \le E\}   \right|\leq 2K.
\]
Therefore, $f$ is transverse to $Y_1\cup Y_2$.
\end{proof}

For the remainder of this section we assume that $(S,\d)$ is a $\delta$-hyperbolic geodesic metric space, for some $\delta \ge 0$, and $G$ is a group acting on $S$ by isometries. 
The following observation is a consequence of Lemma~\ref{lem:inter_conj}.

\begin{lem}\label{lem:transversal-equiv}
 Let $H$ be a subgroup of $G$ with quasi-convex orbits and let  $f\in G$ be an $H$-\apt{} loxodromic element. Then $f$ is transverse to $H$ if and only if $v H v^{-1} \cap \langle f \rangle =\{1\}$, for all $v \in G$.
\end{lem}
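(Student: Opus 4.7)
The plan is to prove the two implications separately, with the reverse direction being the main content and relying on Lemma~\ref{lem:inter_conj}.

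For the forward direction, I would prove the contrapositive. Fix a basepoint $s \in S$, and suppose there exist $v \in G$ and $m \in \Z \setminus \{0\}$ with $f^m \in vHv^{-1}$. Then $f^{mk} = v h_k v^{-1}$ for some $h_k \in H$, for every $k \in \Z$. A short computation shows
\[
\d\bigl(f^{mk}(s), vH(s)\bigr) \le \d\bigl(v h_k v^{-1}(s), v h_k(s)\bigr) = \d\bigl(s,v(s)\bigr),
\]
so for $E := \d(s,v(s))$ the set $\{j \in \Z \mid \d(f^j(s),vH(s)) \le E\}$ contains the infinite progression $\{mk \mid k \in \Z\}$. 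Thus \eqref{eq:trans} fails and $f$ is not transverse to $H$.

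For the reverse direction, I would again argue by contrapositive, using Lemma~\ref{lem:inter_conj} as the key tool. Suppose $f$ is not transverse to $H$: there exists $E \ge 0$ such that for every $K \in \N$ some $v \in G$ makes $\{m \in \Z \mid \d(f^m(s),vH(s)) \le E\}$ have cardinality greater than $K$. Apply Lemma~\ref{lem:inter_conj} with $D = E$ and with the \apt{} loxodromic element $g := f$ to obtain a constant $M \in \N$. Choose $K := M$; then we get $v \in G$ and integers $m_0 < m_1 < \cdots < m_M$ with $\d(f^{m_i}(s), vH(s)) \le E$ for all $i$. In particular $n := m_M - m_0 \ge M$.

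Setting $u := v^{-1} f^{m_0}$ gives
\[
\d\bigl(u(s), H(s)\bigr) = \d\bigl(v^{-1}f^{m_0}(s), H(s)\bigr) \le E, \qquad \d\bigl(u f^n(s), H(s)\bigr) = \d\bigl(v^{-1}f^{m_M}(s), H(s)\bigr) \le E.
\]
Lemma~\ref{lem:inter_conj} then yields $u^{-1} H u \cap \langle f \rangle \neq \{1\}$. Conjugating by $f^{m_0}$ and using that $f^{m_0} \langle f \rangle f^{-m_0} = \langle f \rangle$, we obtain
\[
vHv^{-1} \cap \langle f \rangle = f^{m_0}\bigl(u^{-1}Hu\bigr) f^{-m_0} \cap \langle f \rangle \neq \{1\},
\]
contradicting the hypothesis on conjugates of $H$. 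The only step requiring genuine effort is extracting the pair of distant indices that satisfies the hypotheses of Lemma~\ref{lem:inter_conj}; everything else is formal manipulation.
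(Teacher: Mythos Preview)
Your proof is correct and follows essentially the same approach as the paper's: both directions are proved by contrapositive, with the forward direction being a direct computation showing that a non-trivial intersection produces an infinite arithmetic progression of close indices, and the reverse direction extracting two sufficiently far-apart indices from the failure of transversality and feeding them into Lemma~\ref{lem:inter_conj} via the substitution $u=v^{-1}f^{m_0}$. The only cosmetic difference is that the paper phrases the negation of transversality directly in terms of two indices $k<l$ with $l-k\ge M$, whereas you first list $M+1$ indices and then take the extremes; the content is identical.
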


\begin{proof} Fix any $s \in S$. Arguing by the contrapositive, assume, first, that $f^n \in v H v^{-1}$, for some $n \in \N$ and some $v \in G$.
Then for all $i \in \Z$ we have
\[ \d(f^{ni}(s), v H(s)) \le \d(f^{ni}(s),vHv^{-1}(s))+\d(s,v^{-1}(s))=E,\]
where $E=\d(s,v^{-1}(s))$. Hence $f$ cannot be transverse to $H$.

Conversely, suppose that $f$ is not transverse to $H$. Then there exist $E \ge 0$ such that for every $M \in \N$ there is $v\in G$ and there are integers
$k,l \in \Z$, with $l-k \ge M$ and $\d(f^k(s),vH(s))\le E$, $\d(f^l(s),v H(s))\le E$. It follows that for $u=v^{-1}f^k \in G$, we have
$\d(u(s), H(s)) \le E$ and $\d(uf^{l-k}(s), H(s)) \le E$. Since $M$ was arbitrary,  Lemma \ref{lem:inter_conj} implies that
$u^{-1}Hu \cap \langle f \rangle \neq \{1\}$, which yields that $v H v^{-1} \cap \langle f \rangle \neq \{1\}$, as required.
\end{proof}

\begin{lem}\label{lem:new-D} Suppose that $g \in G$ is a loxodromic WPD element. Then for every $a \in G\setminus E_G(g)$ there exists $N \in \NN$ such that $g^n a$ is loxodromic WPD, for any $n \in \Z$ with $|n| \ge N$.
\end{lem}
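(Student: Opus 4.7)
The plan is to prove loxodromicity and WPD separately, and take $N$ to be the larger of the two thresholds.

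\textbf{Loxodromicity.} Fix a basepoint $s \in S$ and set $x_k = (g^n a)^k(s)$ for $k \in \Z$. The strategy is to apply Lemma~\ref{lem:piecewise_qgeod} to the sequence $(x_k)$. Since $(g^n a)^{-k}$ is an isometry, the Gromov product at any $x_k$ between its two neighbours equals $(a^{-1} g^{-n}(s)\,|\,g^n a(s))_s$, which is bounded by a constant $B$ independent of $n$ by Lemma~\ref{lem:gr_prod-spec_case} (this is where the hypothesis $a \notin E_G(g)$ is essential). The consecutive separation satisfies
\[
\d(x_{k-1}, x_k) = \d(s, g^n a(s)) \geq \d(s, g^n(s)) - \d(s, a(s)),
\]
which tends to $\infty$ with $|n|$ because $g$ is loxodromic. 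Choose $C_0 = \max\{168\delta, B\}$ and $C_1 > 12(C_0 + 12\delta)$, and take $N_1 \in \N$ so that $\d(s, g^n a(s)) \geq C_1$ whenever $|n| \geq N_1$. Lemma~\ref{lem:piecewise_qgeod} then forces $\{x_k\}$ to stay within $2C_0$ of any geodesic joining its endpoints, and a standard reparametrisation argument converts this into a quasi-isometric embedding $\Z \to S$; thus $g^n a$ is loxodromic.

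\textbf{WPD.} Fix $n$ with $|n| \geq N_1$ and set $h = g^n a$. Given $s \in S$ and $\e \geq 0$, the task is to produce $M \in \N$ for which the set
\[
\bigl\{u \in G \mid \d(u(s), s) \leq \e,\ \d(u h^M(s), h^M(s)) \leq \e\bigr\}
\]
is finite. The plan is to exploit the fact that a path from $h^{k-1}(s)$ to $h^k(s)$ decomposes as a bounded geodesic hop of length $\d(s, a(s))$ followed by an $h^{k-1}a$-translate of a quasi-geodesic axis segment of $g$ of length proportional to $|n|$. Concatenating $M$ such pieces approximates the quasi-geodesic axis of $h$ between $s$ and $h^M(s)$, and carries $M$ long conjugate $g$-axis chunks. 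For any $u$ in the set above, slim quadrilaterals together with stability of quasi-geodesics (Lemma~\ref{lem:stab_of_qgeod}) force $u$ to almost fix every intermediate vertex $h^k(s)$, with an inflated constant $\e_1$ depending only on $\e$, $\delta$, and $\d(s, a(s))$. Conjugating by $h^k$, the element $u_k = h^{-k} u h^k$ almost fixes both $a(s)$ and $g^n a(s)$. After further inflation to absorb $g$-axis fellow-traveling slack, the WPD property of $g$ at basepoint $a(s)$ supplies an integer $N_0$; taking $M$ large enough that the accumulated $g$-axis chunks inside the $h$-axis contain a segment of the $g$-axis through $a(s)$ of length exceeding $\d(a(s), g^{N_0} a(s))$ forces each $u_k$ into a finite subset $F \subseteq G$. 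Then $u \in \bigcup_{k=0}^{M-1} h^k F h^{-k}$, which is finite.

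\textbf{Main obstacle.} The principal difficulty is the WPD step: carefully tracking the chain of inflation constants so that WPD of $g$ can be legitimately invoked on a sufficiently long $g$-axis segment identified inside the $h$-axis. The integer $M$ is allowed to depend on $\e$, whereas $n$ must be chosen independently of $\e$; reconciling these is the delicate part and requires uniform control of fellow-traveling between the $h$-axis and conjugate $g$-axes, provided by Lemma~\ref{lem:stab_of_qgeod} and the fixed-basepoint-independent bound from Lemma~\ref{lem:gr_prod-spec_case}.
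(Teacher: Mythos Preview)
Your loxodromicity argument is correct and is essentially the computation the paper carries out inside the proof of Proposition~\ref{prop:existence-transversal}. The WPD argument, however, has a real gap, and it is exactly the one you flag as the ``main obstacle'' without actually resolving.

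Here is the issue. Once $n$ is fixed (and it must be fixed before $\e$ is given), the $h$-axis decomposes into $g$-axis chunks each of combinatorial length $n$. Increasing $M$ gives you \emph{more} chunks, not \emph{longer} ones, and consecutive chunks lie on distinct conjugate $g$-axes (the $k$-th chunk is an $h^{k}$-translate of $[s,g^n(s)]$), so they do not concatenate into a single $g$-axis segment. Your fellow-traveling step correctly shows that each $u_k=h^{-k}uh^{k}$ moves $s$ and $g^n(s)$ by at most some $\e_2$ depending only on $\e,\delta,C_0,\d(s,a(s))$; but to invoke the WPD property of $g$ at scale $\e_2$ you need $n\ge N_0(\e_2)$, and $N_0(\e_2)$ depends on $\e$. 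Since $N$ (hence $n$) has to be chosen uniformly in $\e$, there is no way to guarantee this inequality. The phrase ``the accumulated $g$-axis chunks \ldots\ contain a segment of the $g$-axis through $a(s)$ of length exceeding $\d(a(s),g^{N_0}a(s))$'' is precisely where the argument breaks: no such long segment of a single $g$-axis is present in the $h$-axis when $N_0>n$.

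The paper sidesteps this entirely. It does not attempt a direct WPD computation on $S$; instead it invokes \cite[Proposition~A.1 and Corollary~A.2]{BFGS} to produce a hyperbolic space $T$ with a \emph{cobounded} $G$-action and a $G$-equivariant coarsely Lipschitz map $\pi\colon S\to T$ under which $g$ remains loxodromic WPD. Since $E_G(g)$ is intrinsic (Lemma~\ref{lem:elemrem}), one may apply \cite[Lemma~5.4]{AMS} on $T$ to obtain $N$ with $g^na$ loxodromic WPD on $T$ for $|n|\ge N$, and then pull both properties back to $S$ via $\pi$. The coboundedness of the action on $T$ is what makes \cite[Lemma~5.4]{AMS} applicable and is exactly the missing ingredient that your direct approach lacks.
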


\begin{proof} By \cite[Proposition~A.1 and Corollary A.2]{BFGS} there exists a hyperbolic geodesic metric space $T$ with a cobounded $G$-action and a $G$-equivariant coarsely Lipschitz map $\pi:S \to T$, such that $g$ acts as a loxodromic WPD isometry of $T$. Note that $E_G(g)$ is independent of the choice of $S$ by Lemma~\ref{lem:elemrem}, hence we can apply \cite[Lemma 5.4]{AMS} to find $N \in \N$ such that $g^n a$ is a loxodromic WPD element for the $G$-action on $T$, provided $|n|\ge N$. Since $\pi: S \to T$ is a $G$-equivariant coarsely Lipschitz map, it is easy to see that $g^n a$ will also be a loxodromic WPD element with respect to the action of $G$ on $S$.    
\end{proof}

\begin{prop} \label{prop:existence-transversal}  Let $H$ be a subgroup of $G$ with quasi-convex orbits and let  $g\in G$ be an $H$-\apt{} loxodromic WPD element. Let $U_0 \subseteq G$ be the subset provided by Lemma \ref{lem:w_height} and let
$a \in G\setminus \left( U_0^{-1} H U_0 \cup E_G(g)\right)$. Then the element $f=g^n a$ is loxodromic WPD and transverse to $H$ for all sufficiently large $n \in \N$.
\end{prop}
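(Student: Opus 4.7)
The element denoted $f$ in the statement appears to be a typo for the loxodromic $H$-\apt{} element introduced as $g$ in what follows; I use $g$ throughout. The argument has four steps. First, since $a\notin E_G(g)$, Lemma~\ref{lem:new-D} gives that $f:=g^n a$ is loxodromic WPD for all sufficiently large $n$. A second application of $a\notin E_G(g)$ via Lemma~\ref{lem:gr_prod-spec_case} uniformly bounds the Gromov product $(f^{-1}(s)\,|\,f(s))_s=(a^{-1}g^{-n}(s)\,|\,g^n a(s))_s$, so since $\d(s,f(s))\to\infty$ with $n$, Lemma~\ref{lem:piecewise_qgeod} shows that for large $n$ the piecewise-geodesic $\ell$ through $\{f^k(s):k\in\Z\}$ is a quasi-geodesic axis of $f$ with constants bounded independently of $n$; by $\delta$-slimness the intermediate points $f^k g^n(s)$ lie within a uniform constant of $\ell$ as well.

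Next I would prove that $f$ is itself $H$-\apt{} with the same apt set $U_0$ for $n$ large enough. Using the equivalent aptness criterion from Lemma~\ref{lem:equiv_rwpds}, suppose $u\in G$ satisfies $\d(u(s),H(s))\le C$ and $\d(u f^N(s),H(s))\le C$ for some $N\ge 1$. Then $u\cdot\ell_{[s,f^N(s)]}$ is a quasi-geodesic with both endpoints close to $H(s)$, so Lemma~\ref{lem:quadr-H} places every point on it---in particular the vertex $u g^n(s)$---within some constant $D=D(C)$ of $H(s)$, and Lemma~\ref{lem:inter_conj} applied to $u$ with respect to $g$ yields $u\in HU_0$ provided $n\ge M(s,D)$.

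With $f$ now known to be $H$-\apt{}, Lemma~\ref{lem:transversal-equiv} reduces the transversality claim to showing $vHv^{-1}\cap\langle f\rangle=\{1\}$ for every $v\in G$. Suppose, for contradiction, that $f^m=vhv^{-1}$ with $m\neq 0$, $h\in H$, $v\in G$. Lemma~\ref{lem:w_height} applied to $f$ gives $v^{-1}\in HU_0$, so $v=u_0^{-1}h_0$ with $u_0\in U_0$, $h_0\in H$, and hence $f^m\in\tilde H:=u_0^{-1}Hu_0$. Since $u_0 f^m u_0^{-1}\in H$ has axis $u_0\ell$ and preserves the quasi-convex orbit $Hu_0(s)$, we have $u_0\ell\subseteq\mathcal{N}_{D_1}(Hu_0(s))\subseteq\mathcal{N}_{D_2}(H(s))$ for constants $D_1,D_2$ independent of $n$ (using that $u_0$ ranges over the finite set $U_0$). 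In particular $u_0(s)$ and $u_0 g^n(s)$ both lie within a uniform constant of $H(s)$, so Lemma~\ref{lem:inter_conj} applied to $u_0$ forces $\tilde H\cap\langle g\rangle\neq\{1\}$; write the intersection as $\langle g^{l_0}\rangle$ with $l_0\ge 1$.

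The decisive step is that, because $g^{l_0}\in\tilde H$,
\[
(u_0 g^n)^{-1}H(u_0 g^n)\cap\langle g\rangle\;=\;g^{-n}\tilde H g^n\cap\langle g\rangle\;=\;\langle g^{l_0}\rangle\;\neq\;\{1\},
\]
so Lemma~\ref{lem:w_height} applied to $g$ yields $u_0 g^n\in HU_0$. On the other hand, Lemma~\ref{lem:inter_conj} applied to the element $u_0 f$ (whose images $u_0 f(s)$ and $u_0 f g^n(s)$ are consecutive vertices of $u_0\ell$ and hence lie within a uniform constant of $H(s)$) gives $u_0 f\in HU_0$. Writing $u_0 g^n=h_1 w_1$ and $u_0 f=h_2 w_2$ with $h_i\in H$ and $w_i\in U_0$, the identity $u_0 f=(u_0 g^n)\cdot a$ becomes $w_1 a w_2^{-1}=h_1^{-1}h_2\in H$, whence $a\in w_1^{-1}H w_2\subseteq U_0^{-1}HU_0$, contradicting the hypothesis. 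I expect the main obstacle to be precisely this last step: the intermediate vertex $u_0 g^n(s)$ is not tied to $u_0(s)$ by any power of $g$, so Lemma~\ref{lem:inter_conj} cannot be applied directly to $u_0 g^n$; the way around is to apply Lemma~\ref{lem:w_height} via the observation that the conjugate $g^{-n}\tilde H g^n$ still meets $\langle g\rangle$ in $\langle g^{l_0}\rangle$.
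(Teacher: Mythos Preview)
Your approach diverges from the paper's at a crucial point, and the divergence introduces a genuine gap.

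The problem is in your step 3, where you try to show that $f=g^n a$ is itself $H$-\apt{} (with the same set $U_0$). Your argument runs: if $\d(u(s),H(s))\le C$ and $\d(uf^N(s),H(s))\le C$, then Lemma~\ref{lem:quadr-H} bounds $\d(ug^n(s),H(s))$ by some $D=D(C)$, and Lemma~\ref{lem:inter_conj} gives $u\in HU_0$ provided $n\ge M(s,D(C))$. But $H$-aptness requires this to hold for \emph{every} $C\ge 0$, while $n$ is chosen once and for all; since $D(C)\to\infty$ as $C\to\infty$, no single $n$ can satisfy $n\ge M(s,D(C))$ for all $C$. The ``moreover'' clause of Lemma~\ref{lem:quadr-H} (which gives a $C$-independent bound) does not help here, because the point $ug^n(s)$ sits at distance roughly $\d(s,f(s))$ from the endpoint $u(s)$, and for large $C$ this is not far enough from the endpoint. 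Nor can you appeal to Proposition~\ref{prop:rwpds}, since $H$ is only assumed to have quasi-convex orbits, not to be convex cocompact. Without $H$-aptness of $f$, you cannot invoke Lemma~\ref{lem:transversal-equiv} to reduce transversality to $vHv^{-1}\cap\langle f\rangle=\{1\}$, and you cannot invoke Lemma~\ref{lem:w_height} for $f$ to place $v^{-1}$ in $HU_0$; so your step 4 has no foundation.

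The paper's proof sidesteps this entirely. Rather than proving $f$ is $H$-\apt{} or reducing via Lemma~\ref{lem:transversal-equiv}, it argues directly by contradiction from the definition of transversality: if $f$ were not transverse, then for a fixed $E$ and arbitrarily large $R$ one finds $u\in G$ with $uf^k(s)$ and $uf^l(s)$ both $E$-close to $H(s)$ and $l-k\ge R$. Applying Lemma~\ref{lem:piecewise_qgeod} to the broken geodesic through $uf^k(s),\dots,uf^l(s)$ and then the $2\delta$-slimness of quadrangles, one finds an index $j$ in the \emph{middle} of $[k,l]$ such that $uf^{j-1}(s)$, $uf^j(s)$, $uf^{j+1}(s)$ are all within a constant $D$ of $H(s)$ with $D$ \emph{independent of $E$ and of $u$}. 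Then Lemma~\ref{lem:inter_conj} (for $g$, with this fixed $D$) applies to $u_1=uf^j$ and $u_2=uf^j a^{-1}=u_1a^{-1}$, yielding $u_1,u_2\in HU_0$ and hence $a=u_2^{-1}u_1\in U_0^{-1}HU_0$, the desired contradiction. The key trick you are missing is this passage to a middle index $j$: it is what decouples the bound $D$ from the unknown quantities and allows a single choice of $n\ge M(s,D)$ to work.
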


\begin{proof} Let $s \in S$ be any point, and let the orbit $ H(s)$ be $\eta$-quasi-convex, for some $\eta \ge 0$. Let $B\ge 0$ be given by Lemma \ref{lem:gr_prod-spec_case};
choose  $C_0=\max\{B, 168\delta\}$ and $C_1>12(C_0+12\delta)$, as in Lemma \ref{lem:piecewise_qgeod}. Denote $C_2=\d(s,a(s))$, $D=2\delta+\eta+2C_0+C_2$, and let
$M=M(s,D) \in \N$ be the constant provided by Lemma \ref{lem:inter_conj}.

Since the element $g \in G$ is loxodromic, there exists $N_1 \in \N$, such that for every $n \in \N$, with $n \ge N_1$, we have
\begin{equation}\label{eq:long_segm}
\d(s,g^n a(s))\ge \d(s,g^n(s))-\d(g^n(s),g^na(s)) = \d(s, g^{n}(s))-C_2 \ge C_1.
\end{equation}
On the other hand, since $a \notin E_G(g)$, by Lemma~\ref{lem:new-D} there exists $N_2 \in \N$ such that the element $g^na$ is loxodromic and WPD for all $n \in \N$, $n \ge N_2$.

Let us now fix any $n \in \N$ such that $n \ge \max\{M,N_1,N_2\}$. To finish the proof it remains to show that the element $f=g^na$ is transverse to $H$.
Arguing by contradiction, suppose that there exist $E \ge 0$ such that for all $R\in\N$, there is $v \in G$ such that  $|\{m \in \Z \mid \d(f^m(s),v H(s)) \le E\}|\geq R+1$. Then for $u=v^{-1} \in G$,
there must exist integers $k<l$ and elements $h_k,h_l \in H$ such that $\d(uf^k(s),h_k(s))\le E$, $\d(uf^l(s),h_l(s))\le E$ and $l-k\geq R$.
Since $f$ is loxodromic, by choosing $R$ sufficiently large we can ensure that there is $j \in \Z$, $k<j<l$, such that for any $\epsilon \in \{-1,0,1\}$ we have
\begin{equation}\label{eq:j}
\d(f^k(s), f^{j+\epsilon}(s))>E+2\delta+2C_0 ~\mbox{ and }~ \d(f^{j+\epsilon}(s),f^l(s))>E+2\delta+2C_0.
\end{equation}
Observe that, in view of \eqref{eq:long_segm}, we have $\d(uf^{i-1}(s),uf^i(s))=\d(s,g^na(s))\ge C_1$,
for all $i \in \Z$, and
\[\bigl(uf^{i-1}(s) \,| \,uf^{i+1}(s) \bigr)_{uf^i(s)}=
\bigl( f^{-1}(s) \,| \, f(s) \bigr)_s \le B \le C_0,
\]
by Lemma~\ref{lem:gr_prod-spec_case}, for all $i \in \Z$.
Therefore we can apply Lemma~\ref{lem:piecewise_qgeod} to conclude that $\d(uf^i(s),q) \le 2C_0$ for every $i \in \Z$, with $k \le i \le l$, where $q$ is a geodesic in $S$ joining $uf^k(s)$ with $uf^l(s)$.

Let $x$ be a point of $q$ satisfying $d(uf^j(s),x) \le 2C_0$. Then  \eqref{eq:j} implies that $\d(x,q_-)=\d(x,uf^k(s))>E+2\delta$ and
$\d(x,q_+)=\d(x,uf^l(s))>E+2\delta$. It follows that the distances from $x$ to $[h_k(s),q_-]$ and to $[q_+,h_l(s)]$
are greater than $2\delta$ (see Figure \ref{fig:trans}). Consequently, since geodesic quadrangles in $S$ are $2\delta$-slim, $x$ must be at most $2\delta$ away
from $[h_k(s),h_l(s)]$. Hence $\d(x, H(s)) \le 2\delta+\eta$, and thus $\d(uf^j(s), H(s)) \le 2\delta+\eta+2C_0$. Similarly,
$\d(uf^{j-1}(s), H(s)) \le 2\delta+\eta+2C_0$ and $\d(uf^{j+1}(s), H(s)) \le 2\delta+\eta+2C_0$.

\begin{figure}[ht!]
  \begin{center}
   \includegraphics{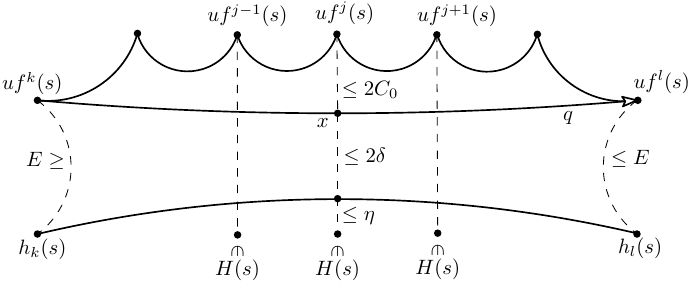}
  \end{center}
  \caption{Illustration of the proof of Proposition~\ref{prop:existence-transversal}}\label{fig:trans}
\end{figure}

Denote $u_1=uf^j \in G$ and $u_2=uf^ja^{-1}=u_1a^{-1} \in G$. Note that $\d(u_1(s), H(s)) \le D$, where $D$ is the constant defined at the beginning of the proof, and
\begin{align*}
\d(u_1g^n(s), H(s)) &\le \d(u_1g^na(s), H(s))+\d(u_1g^n(s),u_1g^na(s))\\ &=\d(u_1f(s), H(s))+\d(s,a(s))
   = \d(uf^{j+1}(s), H(s))+C_2 \le D.
\end{align*}
At the same time, we have $\d(u_2(s), H(s))\le \d(uf^j(s), H(s))+\d(s,a(s))\le D$, and
\[\d(u_2g^{-n}(s), H(s))=\d(uf^ja^{-1}g^{-n}(s), H(s)) =\d(uf^{j-1}(s), H(s)) \le D.
\]
Since $n=|-n| \ge M$, we can apply Lemma \ref{lem:inter_conj} to deduce that $u_1,u_2 \in HU_0$. This yields that
$a=u_2^{-1}u_1 \in U_0^{-1}HU_0$, contradicting the choice of $a$.
\end{proof}

\begin{cor}\label{cor:ex_of_trans}
Suppose that $H_1,\dots,H_k$ are subgroups of $G$ with quasi-convex orbits and $F \leqslant G$ contains a loxodromic WPD element $g$ which is $H_i$-\apt{}, for every $i=1,\dots,k$. If $|F:(F \cap vH_iv^{-1})|=\infty$, for all $v \in G$ and $i =1,\dots,k$, then $F$ contains a loxodromic WPD element $f$ which is transverse to $H_i$, for each $i=1,\dots,k$.
\end{cor}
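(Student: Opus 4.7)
The plan is to produce a single element $a \in F$ avoiding a finite collection of \emph{bad cosets} in $G$, and then take $f = g^n a$ for all sufficiently large $n$, so that Proposition~\ref{prop:existence-transversal} can be applied once for each $H_i$. Concretely, for every $i = 1,\dots,k$, I will apply Lemma~\ref{lem:w_height} to the $H_i$-\apt{} loxodromic element $g$ to obtain a finite subset $U_{0,i} \subseteq G$, and set
\[ Z \;=\; E_G(g) \,\cup\, \bigcup_{i=1}^k U_{0,i}^{-1} H_i U_{0,i}. \]
If I can exhibit $a \in F \setminus Z$, then for each $i$ Proposition~\ref{prop:existence-transversal} yields some $N_i \in \N$ such that $g^n a$ is loxodromic WPD and transverse to $H_i$ whenever $n \geq N_i$. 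Taking $n \geq \max_i N_i$ and setting $f = g^n a \in F$ then completes the argument.

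To locate such an $a$ I will invoke the classical coset lemma of B.~H.~Neumann: a group is not a union of finitely many cosets of subgroups of infinite index. For each $i$ and each pair $u,u' \in U_{0,i}$, the intersection $F \cap u^{-1} H_i u'$ is either empty or a single coset of the subgroup $F \cap u^{-1} H_i u$; by the standing hypothesis applied with $v = u^{-1}$, this subgroup has infinite index in $F$. Hence, provided $|F : F \cap E_G(g)| = \infty$ as well, the set $Z \cap F$ is contained in a finite union of cosets of infinite-index subgroups of $F$, and Neumann's lemma furnishes $a \in F \setminus Z$.

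It remains to handle the residual case $|F : F \cap E_G(g)| < \infty$. In this situation $F$ is virtually cyclic, and since $g \in F$ is loxodromic WPD, the uniqueness of $E_G(g)$ as the maximal elementary subgroup containing $g$ (Lemma~\ref{lem:elemrem}) forces $F \leq E_G(g)$. For every $v \in G$ and every $i$, the subgroup $F \cap v H_i v^{-1}$ has infinite index in the virtually cyclic group $F$ and is therefore finite; intersecting with the torsion-free infinite cyclic subgroup $\langle g \rangle \leq F$ gives $\langle g \rangle \cap v H_i v^{-1} = \{1\}$. Combining the $H_i$-\apt{}ness of $g$ with Lemma~\ref{lem:transversal-equiv} then shows that $g$ is already transverse to each $H_i$, so one may simply take $f = g$. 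The main obstacle is the need to recognise that this virtually cyclic degenerate case requires separate treatment; in the generic regime the coset-avoidance argument via Neumann's lemma is largely bookkeeping.
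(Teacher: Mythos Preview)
Your proof is correct and follows essentially the same approach as the paper: split off the virtually cyclic case (where $f=g$ already works via Lemma~\ref{lem:transversal-equiv}), and in the generic case use Neumann's coset lemma to find $a \in F$ avoiding $E_G(g)$ and each $U_{0,i}^{-1}H_iU_{0,i}$, then apply Proposition~\ref{prop:existence-transversal} simultaneously for all $i$ by taking $n$ large enough. The only cosmetic differences are that the paper treats the virtually cyclic case first and argues directly via $|F:\langle g\rangle|<\infty$, whereas you route through $F\leqslant E_G(g)$; both are valid.
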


\begin{proof} If $F$ is virtually cyclic then $|F:\langle g\rangle|<\infty$, so the assumptions imply that $\langle g \rangle \cap vH_iv^{-1}=\{1\}$, for all $v \in G$
and $i =1,\dots,k$. Therefore $g$ is transverse to $H_i$,  for every  $i =1,\dots,k$, by Lemma \ref{lem:transversal-equiv}. Hence we can take $f=g$.

Thus we can further suppose that $F$ is not virtually cyclic; in particular, $|F:F\cap E_G(g)|=\infty$.
Let $U_{0,i} \subseteq G$ be the finite subset provided by Lemma~\ref{lem:w_height} for $H_i$. Let us show that
$F \setminus \left(\bigcup_{i =1}^k U_{0,i}^{-1} H_i U_0 \cup E_G(g)\right)$ is non-empty. Indeed, otherwise $F$ would be covered by finitely many right cosets modulo the subgroups
$u^{-1} H_iu$, $u \in U_{0,i}$, $i=1,\dots,k$, and $E_G(g)$, which, by a well-known result of B. Neumann \cite[Lemma 4.2]{Neumann} (see also \cite[Proposition 7.3]{M-dis}),
would imply that one of these subgroups intersects $F$ in a subgroup of finite index, contradicting our assumptions.

Thus there must exist at least one element  $a \in F$, $a \notin \bigcup_{i =1}^k U_{0,i}^{-1} H_i U_{0,i} \cup E_G(g)$. This allows us to apply
Proposition \ref{prop:existence-transversal} to conclude that for all large enough $n \in \N$, $f=g^n a \in F$ is a loxodromic WPD element which is
transverse to $H_i$, for each $i=1,\dots,k$.
\end{proof}

In the case when each $H_i$ is convex cocompact, Corollary~\ref{cor:ex_of_trans} can be combined with Proposition~\ref{prop:rwpds} to achieve the following.
\begin{cor}\label{cor:cc-trans}
Suppose that $H_1,\dots,H_k$ are convex cocompact subgroups of $G$ and $F \leqslant G$ contains a loxodromic WPD element. If $|F:(F \cap vH_iv^{-1})|=\infty$, for all $v \in G$ and $i =1,\dots,k$, then $F$ contains a loxodromic WPD element $f$ which is transverse to $H_i$, for each $i=1,\dots,k$.
\end{cor}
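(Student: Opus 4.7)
The plan is essentially to check that the hypotheses of Corollary~\ref{cor:ex_of_trans} are met and then simply invoke it. First, since each $H_i$ is convex cocompact, its orbits on $S$ are quasi-convex by definition, so the quasi-convex-orbit hypothesis of Corollary~\ref{cor:ex_of_trans} is satisfied.

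Next, let $g \in F$ be a loxodromic WPD element, which exists by assumption. I would apply Proposition~\ref{prop:rwpds} to $g$ and each convex cocompact subgroup $H_i$ to conclude that $g$ is $H_i$-\apt{} for every $i = 1,\dots,k$. This gives a single loxodromic WPD element in $F$ that is simultaneously $H_i$-\apt{} for all $i$, as required by Corollary~\ref{cor:ex_of_trans}.

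The remaining hypothesis of Corollary~\ref{cor:ex_of_trans}, namely that $|F : (F \cap vH_iv^{-1})| = \infty$ for all $v \in G$ and all $i$, is assumed in the present statement. Therefore Corollary~\ref{cor:ex_of_trans} applies directly and yields a loxodromic WPD element $f \in F$ that is transverse to each $H_i$, completing the proof.

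There is really no obstacle here: the work has already been done in Proposition~\ref{prop:rwpds} (to get aptness from convex cocompactness plus WPD) and in Corollary~\ref{cor:ex_of_trans} (to get transversality from aptness plus the infinite-index condition). This corollary is purely a packaging of those two results in the convex cocompact setting, so the proof amounts to a one-line citation of each.
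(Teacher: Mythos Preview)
Your proposal is correct and matches the paper's approach exactly: the paper simply remarks that Corollary~\ref{cor:cc-trans} is obtained by combining Corollary~\ref{cor:ex_of_trans} with Proposition~\ref{prop:rwpds}, precisely as you outline.
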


\begin{cor}\label{cor:cc-trans_all_of_G}
Suppose that $H_1,\dots, H_k$ are infinite index convex cocompact subgroups of $G$ and $G$ has at least one loxodromic WPD element. Then $G$ contains a loxodromic WPD element $f$ which is transverse to $H_i$, for each $i=1,\dots,k$.
\end{cor}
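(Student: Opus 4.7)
The plan is to obtain this as an immediate specialization of Corollary~\ref{cor:cc-trans} to the case $F = G$. That corollary produces a loxodromic WPD element of $F$ transverse to each $H_i$ provided three hypotheses hold: (i) each $H_i$ is convex cocompact in $G$, (ii) $F$ itself contains a loxodromic WPD element, and (iii) for every $v \in G$ and every $i$ the index $|F : (F \cap v H_i v^{-1})|$ is infinite.

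Setting $F = G$, hypothesis (i) is given by assumption, and (ii) is exactly the remaining hypothesis of the corollary we are trying to prove. For (iii), note that $G \cap v H_i v^{-1} = v H_i v^{-1}$, and since inner automorphisms preserve the index of a subgroup, we have
\[
|G : v H_i v^{-1}| \;=\; |G : H_i| \;=\; \infty,
\]
where the last equality uses the standing infinite index hypothesis on each $H_i$. All three hypotheses of Corollary~\ref{cor:cc-trans} are therefore satisfied, and the conclusion of that corollary gives exactly the element $f \in G$ required here.

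There is no real obstacle; the content of the statement is entirely absorbed into Corollary~\ref{cor:cc-trans}, whose proof in turn reduces (via Proposition~\ref{prop:rwpds} establishing $H_i$-aptness of any loxodromic WPD element) to the technical work carried out in Proposition~\ref{prop:existence-transversal}. The only verification specific to this corollary is the index computation above.
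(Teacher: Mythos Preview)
Your proposal is correct and matches the paper's approach: the paper states this corollary immediately after Corollary~\ref{cor:cc-trans} without proof, so it is intended to be read exactly as the specialization $F=G$ that you carry out. The only verification needed is indeed that conjugation preserves index, which you supply.
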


Note that if $G$ is not a hyperbolic group, then convex cocompact subgroups are automatically of infinite index.

\section{Random walks and \texorpdfstring{$\mu$}{mu}-mixing actions}\label{sec:rw_htt}

In this section, we use random walks to prove Theorem \ref{Thm:CC}, a more general version of Theorem \ref{Thm:main1}. Along the way we state an extension of some results from \cite{AH21} which follow from Corollary~\ref{cor:cc-trans}.

\subsection{Free products with random subgroups}\label{sec:fp}
We first state one of the main results of \cite{AH21}. We will also include in the statement a result of Taylor-Tiozzo that, under the assumptions of this theorem, a random $k$--generated subgroup is isomorphic to the free group of rank $k$ asymptotically almost surely \cite[Theorem 1.2]{TayTio}.

\begin{thm}\cite[Theorem~1.1]{AH21}\label{thm:AHmain}
Let $G$ be a group with a non-elementary, partially WPD action on a hyperbolic metric space $S$ with $E(G)=1$. Let $(\mu_i)$ be a sequence of permissible probability measures on $G$ and let $H$ be a subgroup of $G$. Suppose $H$ has quasi-convex orbits in $S$ and there exists a loxodromic WPD element $f\in\bigcap_i \Gamma_{\mu_i}$ transverse to $H$. Then for every $k \in \N$, asymptotically almost surely a random $k$-generated subgroup $R$ in $G$ will be free of rank $k$, will satisfy $\langle H, R\rangle\cong H\ast R$, and $\langle H, R\rangle$ will have quasi-convex orbits in $S$. Moreover, if $H$ is quasi-isometrically embedded in $S$, then $\langle H, R\rangle$ is quasi-isometrically embedded in $S$.
\end{thm}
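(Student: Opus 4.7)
The plan is to run a simultaneous ping-pong argument for $H$ and the cyclic groups $\langle w_{n,1}\rangle,\dots,\langle w_{n,k}\rangle$ along sample paths, using the Maher--Tiozzo linear-progress theorem together with the existence of the transverse element $f$ to control the $\delta$-hyperbolic geometry of the walks. By \cite[Theorem~1.4]{MT18} and Theorem~\ref{thm:drift}, for every $i$ the walk $w_{n,i}$ is loxodromic with $\d(s, w_{n,i}(s))\ge (1-\e)Dn$ asymptotically almost surely, and permissibility of $\mu_i$ together with $f\in\Gamma_{\mu_i}$ ensures we can arrange $w_{n,i}$ to be WPD as well. The Taylor--Tiozzo theorem \cite[Theorem 1.2]{TayTio} gives that $R=\langle w_{n,1},\dots,w_{n,k}\rangle\cong F_k$ and is convex cocompact a.a.s.; the remaining task is to couple these walks with $H$ in a single free-product decomposition.

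\textbf{Propagation of transversality to the walks.} The main new input is to pass from transversality of the deterministic element $f$ to transversality of $w_{n,i}$ with respect to $H$. First I would invoke a tracking lemma for random walks on hyperbolic spaces to guarantee that, a.a.s., the vertices $s, g_1(s), g_1g_2(s),\dots,w_{n,i}(s)$ lie within bounded Hausdorff distance of any geodesic from $s$ to $w_{n,i}(s)$, and the same after bi-infinite extension by independent copies of the walk. Since $f$ lies in the support semigroup $\Gamma_{\mu_i}$, a recurrence/large-deviation argument shows that blocks spelling $f$ (or a fixed power of $f$) occur at linear density along the sample word. If some subsegment of a quasi-geodesic axis of $w_{n,i}$ were to stay in a bounded neighborhood of a translate $vH(s)$ for a length exceeding a uniform constant, then a corresponding $f$-block of the walk would translate a long subaxis of $f$ to within a bounded neighborhood of $vH(s)$, contradicting Definition~\ref{def:transv_elem} applied to $f$. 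This yields a uniform transversality constant $K$ for each $w_{n,i}$ with respect to $H$ a.a.s.; the same reasoning, with $H$ replaced by $\langle w_{n,j}\rangle$, gives pairwise transversality among the walks.

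\textbf{Ping-pong via piecewise quasi-geodesics.} With transversality in hand, consider any non-trivial reduced word in the abstract free product $H * \langle w_{n,1}\rangle * \cdots * \langle w_{n,k}\rangle$, say of the form $h_0 \, w_{n,i_1}^{m_1} \, h_1 \, w_{n,i_2}^{m_2} \cdots w_{n,i_\ell}^{m_\ell} h_\ell$, and trace the associated piecewise path in $S$ alternating between segments in the orbit $H(s)$ and segments along axes of the $w_{n,i}$. The linear drift from Theorem~\ref{thm:drift} allows us to raise all exponents to a fixed high power, making consecutive pivot points separated by at least a prescribed $C_1$, while transversality to $H$ and to each $\langle w_{n,j}\rangle$ bounds the Gromov product at each pivot by a constant $C_0$ that depends only on $K$ and the quasi-convexity constant of $H(s)$. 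Lemma~\ref{lem:piecewise_qgeod} then forces the piecewise path to lie in a uniform neighborhood of the geodesic between its endpoints, whence the path is a uniform quasi-geodesic; in particular, the word represents a non-trivial element of $G$, which simultaneously yields $R\cong F_k$, $\langle H,R\rangle\cong H*R$, quasi-convexity of the orbit $\langle H,R\rangle(s)$, and (if $H$ is already quasi-isometrically embedded) a quasi-isometric embedding of $\langle H,R\rangle$ via the standard free-product metric.

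\textbf{Main obstacle.} The technical heart is the second paragraph: converting the deterministic transversality of a single element $f$ into a uniform transversality statement for the random walks. Once the Gromov products at the pivots are controlled by an $n$-independent constant, the rest is a routine application of $\delta$-hyperbolic ping-pong via Lemma~\ref{lem:piecewise_qgeod}. The delicate point is ensuring that the transversality constant $K$ remains independent of $n$ and uniform over all translates $vH(s)$ simultaneously, which seems to require combining tracking with a quantitative recurrence statement for the random walk (saying that $f$-like blocks hit every relevant scale with overwhelming probability). This is the step I would expect to consume the bulk of the technical work.
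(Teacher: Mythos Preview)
First, note that Theorem~\ref{thm:AHmain} is not proved in the present paper: it is quoted verbatim from \cite{AH21} and used as a black box, with the underlying technical engine (Theorem~\ref{thm:rw}) also imported from there. So there is no in-paper proof to compare against; what one can compare to is the shape of the machinery the paper imports.

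Your sketch has a concrete gap in the ping-pong step. Lemma~\ref{lem:piecewise_qgeod} requires \emph{every} consecutive pair of pivot points to satisfy $\d(x_{i-1},x_i)\ge C_1$. In your scheme the pivots alternate between endpoints of $w_{n,i}^{m_j}$-segments and endpoints of $h_j$-segments; linear drift makes the former long, but the syllables $h_j\in H$ may have arbitrarily small displacement $\d(s,h_j(s))$, and raising the exponents on the $w_{n,i}$'s does nothing to lengthen them. Thus the hypothesis of Lemma~\ref{lem:piecewise_qgeod} fails as soon as $H$ contains non-trivial elements of small translation, and absorbing a short $h_j$ into the neighbouring $w_n$-segments changes both the combinatorics and the Gromov-product estimates you would need at the surviving pivots. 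This is exactly the difficulty that the imported Theorem~\ref{thm:rw} is designed to handle: it asserts directly that any reduced word in $Y\cup\{w_n^{\pm1},u_n^{\pm1}\}$ with no two consecutive $Y$-letters labels an $(8,c_n)$-quasi-geodesic, without any lower bound on the $Y$-syllables.

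A second, related point: the approach in \cite{AH21} does \emph{not} aim for $n$-independent constants. The additive constant $c_n$ in Theorem~\ref{thm:rw} is allowed to grow with $n$, subject only to $c_n/n\to 0$, and one wins because the $w_n$-segments grow linearly (this is exactly how Lemma~\ref{lem:long_words} is argued). So your goal of extracting a fixed transversality constant $K$ for $w_n$, uniform in $n$, is both stronger than what is needed and not obviously attainable; the ``propagation of transversality'' paragraph, while intuitively reasonable, is doing more work than the actual proof requires. In \cite{AH21} the transverse element $f$ is used to control how long a single sample path (one period $[s,w_n(s)]$) can fellow-travel a coset $vH(s)$, rather than to establish that the resulting loxodromic $w_n$ is itself transverse to $H$ in the sense of Definition~\ref{def:transv_elem}.
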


By the definition given in \cite[Section~2.2]{AH21}, a subgroup $H$ is \emph{quasi-isometrically embedded in $S$} if and only if $H$ is convex compact in the sense of Section~\ref{sec:hypgeo}. If we assume that $\Gamma=\bigcap_i \Gamma_{\mu_i}$ contains a loxodromic WPD element and that for all $v\in G$, $|\Gamma : (\Gamma \cap  vHv^{-1})|=\infty$, then Corollary~\ref{cor:cc-trans} implies that $\Gamma$ contains a loxodromic WPD element which is transverse to $H$. Hence, combining Theorem~\ref{thm:AHmain} and Corollary~\ref{cor:cc-trans} gives the following.

\begin{thm}\label{thm:cc-rw}
Suppose that $G$ is a group with a non-elementary, partially WPD action on a hyperbolic metric space $S$, and $E(G)=1$. Let $(\mu_i)$ be a sequence of permissible probability measures on $G$ and let $H \in \Sub_{\infty}^{cc}(G\curvearrowright S)$. Suppose  there exists a loxodromic WPD element $f\in\Gamma=\bigcap_i \Gamma_{\mu_i}$ and that for all $v\in G$, $|\Gamma : (\Gamma \cap  vHv^{-1})|=\infty$. Then for all $k\geq 1$, a random $k$-generated subgroup $R$ in $G$ will be free of rank $k$ and will satisfy $\langle H, R\rangle\cong H\ast R$ and $\langle H, R\rangle\in\Sub_{\infty}^{cc}(G\curvearrowright S)$ asymptotically almost surely. 
\end{thm}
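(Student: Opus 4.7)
The strategy is to reduce the statement to a direct application of Theorem~\ref{thm:AHmain}. Every hypothesis of that theorem is already present in Theorem~\ref{thm:cc-rw}, except for the existence of a loxodromic WPD element in $\Gamma$ that is transverse to $H$; producing such an element is exactly what Corollary~\ref{cor:cc-trans} is designed to do.

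First, I would apply Corollary~\ref{cor:cc-trans} with $F=\Gamma$ and the family consisting of the single subgroup $H$. Its three hypotheses are immediate from the standing assumptions: $H$ is convex cocompact, $\Gamma$ contains a loxodromic WPD element, and $|\Gamma:(\Gamma\cap vHv^{-1})|=\infty$ for every $v\in G$. The corollary thus furnishes a loxodromic WPD element $f\in\Gamma$ transverse to $H$.

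Since $H$ is convex cocompact it has quasi-convex orbits in $S$, so every hypothesis of Theorem~\ref{thm:AHmain} is now satisfied by $f$, $H$, and the sequence $(\mu_i)$. That theorem yields, asymptotically almost surely, that $R$ is free of rank $k$, $\langle H,R\rangle\cong H\ast R$, and $\langle H,R\rangle$ has quasi-convex orbits in $S$. Because $H$ is quasi-isometrically embedded in $S$ (as convex cocompactness and quasi-isometric embedding are equivalent in this context, as noted in the discussion following Theorem~\ref{thm:AHmain}), the ``moreover'' clause of Theorem~\ref{thm:AHmain} upgrades the conclusion to: $\langle H,R\rangle$ is quasi-isometrically embedded in $S$, i.e., convex cocompact with respect to $G\curvearrowright S$.

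It remains to verify that $\langle H,R\rangle$ has infinite index in $G$ asymptotically almost surely, and this is the main obstacle, since it is the only conclusion not reducible to a direct citation (convex cocompactness alone does not preclude finite index, for instance when $G$ acts properly on its Cayley graph). I would argue this by combining the Maher--Tiozzo linear drift (Theorem~\ref{thm:drift}) with the hypothesis $|\Gamma:(\Gamma\cap vHv^{-1})|=\infty$: the Schottky-like free product structure $\langle H,R\rangle\cong H*R$ produced by the previous step forces the limit set of $\langle H,R\rangle$ in $\partial S$ to be a proper subset of the limit set of the full $G$-action, and proper containment of limit sets rules out finite index for a quasi-convex subgroup. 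The infinite-index hypothesis on the cosets of $\Gamma\cap vHv^{-1}$ is tailored precisely to let the random walk escape every fixed conjugate of $H$, which should suffice to complete this final step.
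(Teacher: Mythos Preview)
Your reduction to Theorem~\ref{thm:AHmain} via Corollary~\ref{cor:cc-trans} is exactly the paper's argument: the paper's entire proof is the single sentence ``combining Theorem~\ref{thm:AHmain} and Corollary~\ref{cor:cc-trans} gives the following.'' So for the free product structure and convex cocompactness of $\langle H,R\rangle$, you match the paper verbatim.

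You are right to flag the infinite-index conclusion as a separate issue; the paper's one-line proof does not address it explicitly. However, your proposed justification has a genuine gap. The claim that the free product decomposition $\langle H,R\rangle\cong H\ast R$ ``forces the limit set of $\langle H,R\rangle$ to be a proper subset of the limit set of $G$'' is unsupported and in general false: a free product can easily have full limit set (e.g., $F_2=\langle a\rangle\ast\langle b\rangle$ acting on its Cayley tree). Neither the drift estimate nor the index hypothesis on $\Gamma\cap vHv^{-1}$ plugs this hole as stated, and your final sentence is more an expression of hope than an argument.

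A clean way to close this gap, parallel to what the paper does later in Proposition~\ref{p:tt}(c), is to exploit that $(\mu_i)$ is a sequence: apply Theorem~\ref{thm:AHmain} with $k+1$ generators instead of $k$. Asymptotically almost surely one obtains
\[
\langle H,\,w_{n,1},\dots,w_{n,k},\,w_{n,k+1}\rangle \;\cong\; H\ast F_{k+1},
\]
inside which $\langle H,R\rangle=\langle H,w_{n,1},\dots,w_{n,k}\rangle\cong H\ast F_k$ sits as the obvious free factor, hence with infinite index. Therefore $\langle H,R\rangle$ has infinite index in $G$ asymptotically almost surely.
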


 We can simplify the statement of the previous theorem by adding the assumption that each $\mu_i$ has generating support.

\begin{cor}\label{cor:ccrw}
Suppose that $G$ is a group with a non-elementary, partially WPD action on a hyperbolic metric space $S$, and $E(G)=1$. Let $(\mu_i)$ be a sequence of permissible probability measures with generating support on $G$, and let $H \in \Sub_{\infty}^{cc}(G\curvearrowright S)$. Then for all $k\geq 1$, a random $k$-generated subgroup $R$ in $G$  will be free of rank $k$ and will satisfy $\langle H, R\rangle\cong H\ast R$ and $\langle H, R\rangle\in\Sub_{\infty}^{cc}(G\curvearrowright S)$ asymptotically almost surely. 
\end{cor}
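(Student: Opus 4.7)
The plan is to deduce this corollary as an immediate special case of Theorem~\ref{thm:cc-rw}. All the substantive work has already been done there (and in the transversality machinery of Sections~\ref{sec:apt}--\ref{sec:trans} feeding into it via Corollary~\ref{cor:cc-trans}), so the only thing to check is that the two hypotheses of Theorem~\ref{thm:cc-rw} become automatic once every $\mu_i$ has generating support.

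First I would verify that $\Gamma := \bigcap_i \Gamma_{\mu_i}$ contains a loxodromic WPD element. Fix $i$. Since $\mu_i$ is permissible, it is reversible, which means $\Gamma_{\mu_i}$ is already a subgroup of $G$ (not merely a subsemigroup). Combining reversibility with the generating support hypothesis $\Gamma_{\mu_i} = G$ gives $\Gamma_{\mu_i} = G$ for every $i$, and hence $\Gamma = G$. The assumption that the action $G \curvearrowright S$ is partially WPD then supplies a loxodromic WPD element in $G = \Gamma$.

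Second I would check the infinite index condition $|\Gamma : (\Gamma \cap vHv^{-1})| = \infty$ for every $v \in G$. Since $\Gamma = G$, this amounts to $|G : vHv^{-1}| = \infty$ for every $v$, which holds because $H \in \Sub_{\infty}^{cc}(G \curvearrowright S)$ is of infinite index in $G$ and conjugation preserves index. With both hypotheses of Theorem~\ref{thm:cc-rw} in place, its conclusion delivers exactly the statement of the corollary.

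There is no real obstacle to this argument; the only mild point worth noting is that one must invoke reversibility, rather than merely the raw semigroup generation, to pass from ``$\mathrm{Supp}(\mu_i)$ generates $G$'' to ``$\Gamma_{\mu_i} = G$'' in the definition of permissibility, but this is part of the definition of a permissible measure. The whole force of the corollary is that the generating support condition simultaneously trivializes both side conditions of Theorem~\ref{thm:cc-rw}, leaving nothing further to prove.
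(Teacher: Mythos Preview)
Your proposal is correct and matches the paper's approach exactly: the paper presents Corollary~\ref{cor:ccrw} as an immediate simplification of Theorem~\ref{thm:cc-rw} under the generating support assumption, with no separate proof given. One minor remark: by the paper's definition, ``generating support'' already means $\Gamma_{\mu_i}=G$ (as a subsemigroup), so your detour through reversibility is unnecessary, though harmless.
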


\begin{rem}
As in \cite[Theorem 3.3]{AH21}, if the action of $G$ on $S$ is cobounded then the condition $E(G)=\{1\}$ can be removed from the previous theorems at the cost of replacing the isomorphism $\langle H, R\rangle\cong H\ast R$ with $\langle H, R, E(G)\rangle\cong HE(G)\ast_{E(G)} RE(G)$. However, in this case if $H\cap E(G)=1$ then it follows that we again have $\langle H, R\rangle\cong H\ast R$.  
\end{rem}

Note that Theorem~\ref{thm:intro-ccrw} is a special case of Corollary \ref{cor:ccrw} obtained by adding the assumptions that $(\mu_i)$ is a constant sequence with each $\mu_i=\mu$ and $\mathrm{Supp}(\mu)$ is a finite, symmetric generating set of $G$, and then applying the conclusion of Corollary \ref{cor:ccrw} with $k=1$. The condition that $w_n$ is a loxodromic element asymptotically almost surely is given by \cite[Theorem 1.4]{MT18}.

The notion of a stable subgroup of a finitely generated group was introduced by Durham-Taylor in \cite{DurTay} as a generalization of the notion of a quasi-convex subgroup of a hyperbolic group. In right-angled Artin groups and, more generally, hierarchically hyperbolic groups, a subgroup is stable if and only if it is convex cocompact with respect to an appropriate acylindrical (hence, partially WPD) action on a hyperbolic metric space \cite{KMT17, ABD}. In particular, the following corollary is an immediate consequence of Theorem \ref{thm:cc-rw}. This essentially resolves \cite[Conjecture 6.14]{AH21} and \cite[Conjecture 6.15]{AH21}, though we have added a few additional and necessary hypothesis.

\begin{cor}
Let $G$ be a right-angled Artin group or, more generally, a hierarchically hyperbolic group. Suppose $G$ is acylindrically hyperbolic and $H$ is an infinite index stable subgroup of $G$ such that $H\cap E(G)=\{1\}$.  Let $(\mu_i)$ be a sequence of permissible probability measures on $G$ with generating supports. Then a random $k$-generated subgroup $R$ of $G$  is free of rank $k$, satisfies $\langle H, R\rangle\cong H\ast R$ and $\langle H, R\rangle$ is stable in $G$ asymptotically almost surely.
\end{cor}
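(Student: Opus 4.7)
The plan is to reduce this corollary to a direct application of Corollary~\ref{cor:ccrw}, via the equivalence between stability and convex cocompactness in an appropriately chosen acylindrical action.

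First, I would invoke the work of Kim-Mangahas-Taylor \cite{KMT17} for right-angled Artin groups and of Abbott-Behrstock-Durham \cite{ABD} for hierarchically hyperbolic groups to produce a cobounded acylindrical action of $G$ on a hyperbolic geodesic metric space $S$ with the property that a finitely generated subgroup of $G$ is stable in $G$ if and only if it is convex cocompact with respect to this $G$-action on $S$. Since $G$ is acylindrically hyperbolic and not virtually cyclic, this action is non-elementary, and since acylindricity implies the WPD condition for every loxodromic element, the action is in particular partially WPD. Thus $G\curvearrowright S$ satisfies the hypotheses of Corollary~\ref{cor:ccrw}, except possibly for the condition $E(G)=\{1\}$.

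Next, because $H$ is stable in $G$ and $|G:H|=\infty$, the identification above places $H$ in $\Sub_{\infty}^{cc}(G\curvearrowright S)$. Applying Corollary~\ref{cor:ccrw} (together with the remark immediately following it, which permits a non-trivial finite radical when the action is cobounded) to the sequence $(\mu_i)$, I obtain that asymptotically almost surely a random $k$-generated subgroup $R$ is free of rank $k$, satisfies $\langle H,R,E(G)\rangle \cong HE(G)\ast_{E(G)}RE(G)$, and $\langle H,R,E(G)\rangle \in \Sub_{\infty}^{cc}(G\curvearrowright S)$. The hypothesis $H\cap E(G)=\{1\}$, combined with the second part of that remark, then upgrades the amalgamated product to a free product $\langle H,R\rangle \cong H\ast R$, and in particular shows $\langle H,R\rangle$ is convex cocompact with respect to $G\curvearrowright S$. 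Invoking the biconditional of \cite{KMT17}, \cite{ABD} in the reverse direction, $\langle H,R\rangle$ is stable in $G$, as required.

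The one step that is not formal is the existence of the ``correct'' hyperbolic space $S$: outside of RAAGs, this requires the full strength of the largest acylindrical action machinery of \cite{ABD}, and, more delicately, the identification of stable subgroups with convex cocompact subgroups of that action. Once that characterization is cited, the remainder of the argument is a bookkeeping exercise combining Corollary~\ref{cor:ccrw} with the $H\cap E(G)=\{1\}$ hypothesis, and there are no new obstacles introduced beyond those already handled in the preceding sections.
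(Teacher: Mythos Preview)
Your proposal is correct and follows essentially the same route as the paper, which simply declares the corollary to be an immediate consequence of Theorem~\ref{thm:cc-rw} together with the stable/convex-cocompact equivalence from \cite{KMT17, ABD}. The only cosmetic differences are that you cite Corollary~\ref{cor:ccrw} rather than Theorem~\ref{thm:cc-rw} (equivalent here because of the generating-support hypothesis) and that you make explicit the use of the cobounded-action remark to handle the case $E(G)\neq\{1\}$, which the paper leaves implicit; your observation that $\langle H,R\rangle$ has finite index in the convex cocompact group $\langle H,R,E(G)\rangle$ (hence is itself convex cocompact, hence stable) is the correct way to close that small gap.
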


\subsection{\texorpdfstring{$\mu$}{mu}-mixing actions}
We now discuss one of the main technical results from \cite{AH21} which we will use in the proof of Theorem \ref{Thm:CC}. We first recall some notation from \cite{AH21}. Let $G$ be a group acting by isometries on a metric space $(S,\d)$. If $Y\subseteq 
G$, by a word in the alphabet $Y$ we mean a product of elements of $Y$, $g_1\dots g_n$, where each $g_i\in Y$. In particular, such words contain only elements of $Y$ and not their inverses. However, in most cases we consider, the set $Y$ will be symmetric. 

Now let $W=g_1\dots g_n$ be a word in the elements of $G$. We say that $W$ \emph{represents} the element $g=g_1\cdots g_n\in G$ and $W$ is \emph{reduced} if $g_i\neq g_{i+1}^{-1}$, for $i=1,\dots,n-1$. 
Given $s \in S$, a \emph{path labeled by $W$ and based at $s$} in $S$ is a path from $s$ to $g(s)$ which is a concatenation of geodesics of the form
\[
[s, g_1(s)]\,[g_1(s), g_1g_2(s)]\ldots[g_1\cdots g_{n-1}(s), g_1\cdots g_n(s)].
\]

We will refer to $g_i$ as the \emph{label} of the subpath $[g_1\cdots g_{i-1}(s), g_1\cdots g_i(s)]$; subpaths labeled by subwords of $W$ are defined similarly. Note that since $G$ is acting by isometries, the length of $[g_1\cdots g_{i-1}(s), g_1 \cdots g_i(s)]$ is equal to $\d(s, g_i(s))$.

The following is a slight simplification of \cite[Theorem~3.2]{AH21}. Note that, in the terminology of \cite{AH21}, a word in the alphabet $Y$ is allowed to include inverses of elements of $Y$. In our terminology, this is equivalent to a word in $Y^{\pm 1}$.

\begin{thm}\label{thm:rw}
Let $G$ be a group with a non-elementary partially WPD action on a hyperbolic metric space $S$. Suppose $E(G)=\{1\}$ and $\mu$ is a permissible probability measure on $G$. Let $s\in S$ and $Y$ be a subset of $G\setminus\{1\}$  with the orbit $Y(s)$ $\sigma$--quasi-convex. Suppose also that there exists a loxodromic WPD element $f\in\Gamma_\mu$ which is transverse to $Y$. Let $w_n$ and $u_n$ be independent random walks with respect to $\mu$. Then there is a sequence of constants $(c_n)$ such that the following holds asymptotically almost surely. Given any reduced non-trivial word $W$ in the alphabet $Y\cup\{w_n^{\pm1}\}\cup\{u_n^{\pm 1}\}$ with no two consecutive letters belonging to $Y$, the path in $S$ based at $s$ and labeled by $W$ is a $(8, c_n)$--quasi-geodesic. 
\end{thm}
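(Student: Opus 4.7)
The plan is to verify a local-to-global quasi-geodesicity criterion (a variant of Lemma~\ref{lem:piecewise_qgeod} that allows short segments to be absorbed into the additive constant) applied to the broken geodesic path $p$ based at $s$ and labeled by $W = g_1 g_2 \cdots g_N$. Write $x_i = g_1 \cdots g_i(s)$ for $i = 0,\dots,N$, so that the corners of $p$ are situated at these points. To conclude that $p$ is $(8, c_n)$-quasi-geodesic, the two ingredients required are (a) asymptotically almost sure lower bounds $d(x_{i-1}, x_i) \to \infty$ for the jumps coming from random-walk letters, and (b) asymptotically almost sure upper bounds on the Gromov products at every corner, $(x_{i-1}\,|\,x_{i+1})_{x_i} = (g_i^{-1}(s)\,|\,g_{i+1}(s))_s$, by a constant $c_n'$ depending only on $n$ and not on the prefix $g_1\cdots g_{i-1}$.

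Ingredient (a) follows from Theorem~\ref{thm:drift}, which gives $d(s, w_n(s)), d(s, u_n(s)) \geq Dn/2$ asymptotically almost surely. The hypothesis that consecutive letters of $W$ are never both in $Y$ ensures that every $Y$-letter (whose translation length need not be large) is flanked by long random-walk steps, and the bounded contribution of $Y$-letters is absorbed into the additive constant $c_n$. For ingredient (b), the corners split into two types (the type of two consecutive $Y$-letters being excluded by hypothesis): random-random, where both letters are of the form $w_n^{\pm 1}$ or $u_n^{\pm 1}$; and random-$Y$ (or symmetrically $Y$-random). For random-random corners, standard alignment estimates for independent random walks on hyperbolic spaces show that the Gromov products grow at most sub-linearly in $n$ with overwhelming probability, essentially because the walks converge to distinct points of the Gromov boundary. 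For corners involving a letter from $Y$, controlling the Gromov product amounts to showing that a quasi-geodesic axis of $w_n^{\pm 1}$ (or $u_n^{\pm 1}$) does not fellow-travel the coset orbit $g_1\cdots g_{i-1}(Y(s))$ for long near the appropriate basepoint; this is precisely the transversality condition encoded in Definition~\ref{def:transv_elem} and Lemma~\ref{lem:transverse-equiv_def}.

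The main obstacle is to propagate the transversality of the hypothesised element $f \in \Gamma_\mu$ to the random walks $w_n, u_n$ with a bound $c_n'$ that is uniform over all prefixes $v = g_1 \cdots g_{i-1}$. The strategy is to exploit that, as $n \to \infty$, the realization of $w_n$ almost surely contains long subwords that closely shadow high powers of $f$: because $f$ lies in the semigroup generated by $\mathrm{Supp}(\mu)$, any fixed factorization of $f$ as a product of $\mu$-atoms appears with positive probability inside each sufficiently long window of independent $\mu$-steps, so by Borel-Cantelli such windows occur at linearly many positions along $w_n$ with probability tending to $1$. From this one deduces that a quasi-geodesic axis of $w_n$ contains a long central subsegment that fellow-travels the translate of an axis of a high power of $f$, and hence inherits, up to bounded error, the transversality of $f$ with respect to $Y$. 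The key point is that the uniformity in the coset $v \in G$ is built into Definition~\ref{def:transv_elem} and is preserved under this fellow-travelling transfer. Combining this with the quantitative bound \eqref{eq:trans} for $f$ and a union bound over the (finite, realization-dependent) set of cosets actually appearing along $p$ yields a single constant $c_n'$ that works at every corner, so the local-to-global criterion applies and $p$ is $(8,c_n)$-quasi-geodesic for a suitable $c_n = O(c_n')$.
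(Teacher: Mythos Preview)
The paper does not prove this theorem: it is quoted, in slightly simplified form, as \cite[Theorem~3.2]{AH21} and used as a black box. So there is no in-paper argument to compare against; any comparison has to be with the original argument in \cite{AH21}.

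Your outline is broadly in line with that argument: the local-to-global criterion (Lemma~\ref{lem:piecewise_qgeod}-type), the drift estimate (Theorem~\ref{thm:drift}) for long random-walk segments, and the idea of transferring transversality from the fixed element $f$ to the random elements via the occurrence of long $f$-blocks in the sample path are exactly the right ingredients. Two points in your sketch are muddled, however.

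First, once you translate the Gromov product at the corner $x_i$ back to the basepoint, as you correctly write $(x_{i-1}\,|\,x_{i+1})_{x_i}=(g_i^{-1}(s)\,|\,g_{i+1}(s))_s$, it depends only on the ordered pair $(g_i,g_{i+1})$ and not at all on the prefix $g_1\cdots g_{i-1}$. There is therefore nothing to make ``uniform over all prefixes $v$'', no coset orbit $g_1\cdots g_{i-1}(Y(s))$ enters, and no union bound over cosets is needed. What must be bounded are finitely many quantities of the form $(a(s)\,|\,b(s))_s$ with $a,b\in\{w_n^{\pm1},u_n^{\pm1}\}$, $a\neq b^{-1}$, together with the four suprema $\sup_{y\in Y}(a(s)\,|\,y(s))_s$ for $a\in\{w_n^{\pm1},u_n^{\pm1}\}$.

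Second, for the last type of bound the relevant object is the geodesic $[s,w_n^{\pm1}(s)]$, not the axis of $w_n$ as a loxodromic isometry, and a ``long central subsegment'' matching a power of $f$ is not what is needed. If the first occurrence of a fixed word representing $f^M$ in the sample path starts at step $j$, then transversality of $f$ (Definition~\ref{def:transv_elem}) forces $\sup_{y\in Y}(w_n(s)\,|\,y(s))_s$ to be bounded in terms of~$j$ (plus constants). To get a deterministic $c_n$ with $c_n/n\to 0$---which is what the applications (e.g.\ Lemma~\ref{lem:long_words}) require, via the ``moreover'' clause of \cite[Theorem~3.2]{AH21}---one needs the \emph{first} such occurrence to appear at a step $j=o(n)$ with probability tending to~$1$, which follows from the independence of increments. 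A matching segment in the middle of the walk only yields $c_n$ of order~$n$, which suffices for the statement as written but not for its intended use.
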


In the next lemma we keep the assumptions and notation used in Theorem~\ref{thm:rw}. The argument is essentially contained in the proof of \cite[Theorem~3.2]{AH21}, but we will include it here as well.

\begin{lem}\label{lem:long_words}
Let $W$ be a reduced word in the alphabet $Y\cup\{w_n^{\pm1}\}\cup\{u_n^{\pm 1}\}$ with no two consecutive letters belonging to $Y$. Suppose $W$ contains at least one letter belonging to $\{w_n^{\pm1}\}\cup\{u_n^{\pm 1}\}$ and $W$ represents $g_n\in G$. Then for all $N\geq 0$, $\d(s, g_n(s))\geq N$ asymptotically almost surely.  In particular, $g_n\neq_G 1$ asymptotically almost surely. 
\end{lem}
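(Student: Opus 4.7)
My plan is to combine the quasi-geodesic estimate of Theorem~\ref{thm:rw} with the linear drift given by Theorem~\ref{thm:drift}. By Theorem~\ref{thm:rw}, asymptotically almost surely the path $p_W$ in $S$ based at $s$ and labeled by $W$ is an $(8, c_n)$-quasi-geodesic from $s$ to $g_n(s)$. The quasi-geodesic inequality then yields
\[
\|p_W\| \leq 8\, \d(s, g_n(s)) + c_n.
\]

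On the other hand, by hypothesis $W$ contains at least one letter $z_n$ lying in $\{w_n^{\pm 1}\} \cup \{u_n^{\pm 1}\}$. The subpath of $p_W$ corresponding to this occurrence of $z_n$ is a single geodesic of length $\d(s, z_n(s))$, so $\|p_W\| \geq \d(s, z_n(s))$. Combining these two inequalities gives the key estimate
\[
\d(s, g_n(s)) \geq \frac{\d(s, z_n(s)) - c_n}{8}.
\]
By Theorem~\ref{thm:drift} applied to both $w_n$ and $u_n$, there is a constant $D > 0$ such that $\d(s, w_n(s)) \geq (D/2) n$ and $\d(s, u_n(s)) \geq (D/2) n$ asymptotically almost surely; in particular $\d(s, z_n(s)) \geq (D/2) n$ a.a.s.

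To finish, I will invoke the fact that the deterministic sequence $(c_n)$ produced in the proof of Theorem~\ref{thm:rw} (i.e., \cite[Theorem~3.2]{AH21}) grows sub-linearly in $n$, so that $c_n = o(n)$. Fixing $N \geq 0$, the inequality $(D/2) n - c_n \geq 8 N$ then holds for all sufficiently large $n$, and intersecting the a.a.s.\ quasi-geodesic event with the a.a.s.\ drift event gives $\d(s, g_n(s)) \geq N$ asymptotically almost surely. In particular, $g_n \neq_G 1$ asymptotically almost surely. The main subtlety is the need to know that $c_n = o(n)$: this is not spelled out in the statement of Theorem~\ref{thm:rw}, but it is transparent from its proof, where the $c_n$ arise from bounded geometric data of the action together with Gromov product estimates at the ``pivot'' points along typical trajectories of the random walks, all of which are negligible compared to the linear drift $Dn$.
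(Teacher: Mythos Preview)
Your proof is correct and follows essentially the same route as the paper's: combine the $(8,c_n)$-quasi-geodesic estimate from Theorem~\ref{thm:rw} with the linear drift from Theorem~\ref{thm:drift}, and use that $c_n/n\to 0$. The only point worth tightening is your justification for $c_n=o(n)$: rather than appealing vaguely to ``bounded geometric data \dots\ negligible compared to the linear drift,'' the paper simply cites the ``moreover'' part of \cite[Theorem~3.2]{AH21}, which states directly that the constants $c_n$ can be chosen with $c_n/n\to 0$.
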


\begin{proof}
 Let $p$ be a path in $S$ based at $s$ and labeled by $W$. Let $D$ be the constant given by Theorem \ref{thm:drift} and $c_n$ the constants given by Theorem \ref{thm:rw}. By the ``moreover" part of \cite[Theorem~3.2]{AH21}, the constants $c_n$ can be chosen so that $\frac{c_n}{n}\to 0$ as $n\to \infty$. Hence we can choose $\e>0$ such that for all sufficiently large $n$, and $D-\e D-\frac{c_n}{n}>0$.
 
 By assumption, $p$ contains a subpath labeled by an element of $\{w_n^{\pm1}\}\cup\{u_n^{\pm 1}\}$. Without loss of generality, suppose this subpath is labeled by $w_n$. Since this subpath has length $\d(s, w_n(s))$, by Theorem \ref{thm:drift} we have
 \[
\|p\|\geq \d(s, w_n(s))\geq (1-\e)Dn,
 \]
asymptotically almost surely. Also, as $p$ is an $(8, c_n)$ quasi-geodesic (which happens asymptotically almost surely), we have 
\[
\|p\|\leq 8\d(s, g_n(s))+c_n.
\]

Combining these inequalities, we obtain
\[
\d(s, g_n(s))\geq \frac{1}{8}\left((1-\e)Dn-c_n \right)=\frac{1}{8}\left(D-\e D-\frac{c_n}{n} \right)n ,
\]
asymptotically almost surely. By our choice of $\e$, $\frac{1}{8}\left(D-\e D-\frac{c_n}{n} \right)>0$ for all sufficiently large $n$. Hence, for any fixed $N\geq 0$, $\frac{1}{8}\left(D-\e D-\e B \right)n\geq N$ for all but finitely many values of $n$. Therefore, $\d(s, g_n(s))\geq N$ asymptotically almost surely.
\end{proof}

We are now ready to prove the following, which is the main ingredient in the proof of Theorem \ref{Thm:CC}.

\begin{prop}\label{p:tt}
Let $G$ be a group with a non-elementary, partially WPD action on a hyperbolic space $S$ and $E(G)=\{1\}$. Let $H$ and $K$ be infinite index convex cocompact subgroups of $G$ and let $\mc{F}$ be a finite subset of $G$. Let $\mu$ be a permissible probability measure on $G$ with generating support and let $w_n$ be a random walk with respect to $\mu$. Then the subgroup $L_{n}=\langle w_n^{-1}Hw_n, K\rangle$ will satisfy the following conditions asymptotically almost surely.
\begin{enumerate}
\item[(a)] $L_{n}$ is convex cocompact;
\item[(b)] $L_{n}\cap \mathcal F=K\cap \mathcal F$ and $w_nL_{n}w_n^{-1}\cap \mathcal F=H\cap \mathcal F$;
\item[(c)] $L_n$  has infinite index in $G$.
\end{enumerate}
\end{prop}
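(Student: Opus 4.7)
Fix a basepoint $s \in S$. Using \cref{cor:cc-trans_all_of_G}, I would first choose a loxodromic WPD element $f \in G$ that is transverse to both $H$ and $K$; because $\mathrm{Supp}(\mu)$ generates $G$ we have $\Gamma_\mu = G$, so $f \in \Gamma_\mu$. By \cref{lem:transversal-equiv}, this transversality yields $\langle f\rangle \cap vHv^{-1} = \langle f\rangle \cap vKv^{-1} = \{1\}$ for every $v \in G$. The whole proof will rest on one technical lemma, obtained by adapting the argument of \cref{thm:rw} to the present alternating setting.

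\textbf{Key Quasi-Geodesic Lemma.} There exist constants $c_n \ge 0$ with $c_n/n \to 0$ such that asymptotically almost surely, for every non-trivial reduced word
\[
W \;=\; k_0 \cdot w_n^{-1} h_1 w_n \cdot k_1 \cdots w_n^{-1} h_m w_n \cdot k_m
\]
with $h_i \in H \setminus \{1\}$ and $k_j \in K$ (non-trivial except possibly for $j \in \{0,m\}$), the path $p$ in $S$ based at $s$ and labeled by $W$ is an $(8,c_n)$-quasi-geodesic; moreover, when $m \ge 1$, $\|p\| \ge d(s, w_n(s))$, which by \cref{thm:drift} is a.a.s.\ at least $(1-\e)Dn$. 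The proof follows the template of \cref{thm:rw} and \cref{lem:long_words}: transversality of $f$ to both $H$ and $K$, combined with \cref{lem:gr_prod-bound} and \cref{lem:gr_prod-spec_case}, bounds the Gromov products at every corner of the piecewise-geodesic path, so that \cref{lem:piecewise_qgeod} delivers the quasi-geodesic. The main technical obstacle is precisely this step: the alphabet now contains two distinct quasi-convex subgroup types ($H$ and $K$) rather than one as in \cref{thm:rw}, and controlling the Gromov products at all the new corner types is why simultaneous transversality of $f$ to both $H$ and $K$ from \cref{cor:cc-trans_all_of_G} is essential.

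\textbf{Consequences (a) and (b).} Define $\phi \colon K*H \to G$ by $\phi\vert_K = \mathrm{id}_K$ and $\phi(h) = w_n^{-1} h w_n$; its image is $L_n$. The Key Lemma gives injectivity (a non-trivial normal form either lies in $K \setminus \{1\}$ and maps to itself, or contains an $H$-letter and moves $s$ by a positive distance), so $L_n \cong K*H$ is finitely generated. Quasi-convexity of $L_n(s)$ in $S$ follows by concatenating the quasi-geodesics of the Key Lemma, and properness of the $L_n$-action holds because, for any $R > 0$, only finitely many $\gamma \in L_n$ can satisfy $d(s, \gamma(s)) \le R$ (those with $\gamma \in K$ by properness of $K$, none with $\gamma \notin K$ for $n$ large). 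This yields (a). For (b), set $R = \max_{g \in \mathcal F} d(s, g(s))$: if $g \in L_n \cap \mathcal F$ and $g = \phi(\ell)$ with $\ell \notin K$, the Key Lemma gives a.a.s.\ $d(s, g(s)) \ge (d(s, w_n(s)) - c_n)/8 > R$, contradicting $g \in \mathcal F$. Hence $L_n \cap \mathcal F \subseteq K \cap \mathcal F$ a.a.s., and the reverse inclusion is trivial. The identity $w_n L_n w_n^{-1} \cap \mathcal F = H \cap \mathcal F$ follows by the symmetric argument applied to $\psi \colon H*K \to G$, $\psi\vert_H = \mathrm{id}_H$, $\psi(k) = w_n k w_n^{-1}$, whose image is $w_n L_n w_n^{-1}$.

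\textbf{Consequence (c).} Infinite index is obtained by combining (b) with a limit-set argument. Since $|G : K| = \infty$, pick distinct right-coset representatives $g_1, g_2, \ldots \in G$ of $K$; applying (b) to the finite set $\mathcal F_N = \{g_i g_j^{-1} : 1 \le i, j \le N\}$ shows that a.a.s.\ $g_1 L_n, \ldots, g_N L_n$ are pairwise distinct, so $|G : L_n| \ge N$ a.a.s.\ for every fixed $N$. To upgrade this to $|G : L_n| = \infty$ a.a.s., one uses that $L_n$ is convex cocompact with limit set $\Lambda(L_n) \subseteq \partial S$: the quantitative form of the Key Lemma forces $|m| \ge \Omega(n)$ whenever $f^m \in L_n$ with $m \ne 0$ (since transversality excludes $\ell \in K$ and excludes $\ell$ being a single $H$-letter, forcing $\ell$ to have word length $\ge 2$), which prevents $f^{\pm\infty}$ from lying in $\Lambda(L_n)$ a.a.s. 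Hence $\Lambda(L_n) \subsetneq \Lambda(G)$ a.a.s., and the standard correspondence between limit sets and indices for convex cocompact subgroups gives $|G : L_n| = \infty$ a.a.s., completing the proof.
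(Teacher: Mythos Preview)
Your setup for the ``Key Lemma'' and parts (a), (b) is essentially the paper's, but you are over-engineering the first step. There is no need to \emph{adapt} \cref{thm:rw} to handle ``two quasi-convex subgroup types'': the paper simply sets $Y=(H\cup K)\setminus\{1\}$, observes that $Y(s)=H(s)\cup K(s)$ is quasi-convex (a union of two quasi-convex sets through $s$), and that the element $f$ is transverse to $Y$ by \cref{lem:transuc}. Since any alternating word $k_0 w_n^{-1} h_1 w_n k_1\cdots$ is already a reduced word over $Y\cup\{w_n^{\pm1}\}$ with no two consecutive $Y$-letters, \cref{thm:rw} applies verbatim. After that, your arguments for (a) and (b) match the paper's.

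Part (c), however, has a genuine gap. You correctly deduce that if $m\ne 0$ and $f^m\in L_n$ then $|m|\ge \Omega(n)$, but the sentence ``which prevents $f^{\pm\infty}$ from lying in $\Lambda(L_n)$'' does not follow. A boundary point of $L_n$ need not arise from a power of $f$: one could have $\ell_j\in L_n$ with $\ell_j(s)\to f^{+\infty}$ without any nonzero power of $f$ lying in $L_n$, and nothing you have proved rules this out (nor does it rule out, say, $f^{Cn}\in L_n$ for some fixed $C$, which would already put $f^{\pm\infty}$ in $\Lambda(L_n)$). The ``standard correspondence between limit sets and indices'' you invoke would also need care here, since $G$ is not assumed to act properly on $S$. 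The paper sidesteps all of this with a one-line trick: \cref{thm:rw} allows a second, independent random walk $u_n$, so words over $Y\cup\{w_n^{\pm1}\}\cup\{u_n^{\pm1}\}$ are still $(8,c_n)$-quasi-geodesic. Applying \cref{lem:long_words} to a reduced word in $L_n*\langle u_n\rangle$ (written over that alphabet) containing $u_n^{\pm1}$ shows it is nontrivial in $G$, hence $\langle L_n,u_n\rangle\cong L_n*\langle u_n\rangle$ a.a.s., and $L_n$ has infinite index in this free product, hence in $G$.
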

\begin{proof} Fix a basepoint $s\in S$.
By Corollary \ref{cor:cc-trans_all_of_G} there exists a loxodromic WPD element $h \in G$ which is transverse to $H$ and $K$. Choose $Y=(H\cup K)\setminus\{1\}$ and note that $Y(s)$ is quasi-convex in $S$, by Lemma~\ref{lem:union_of_qc}, and $h$ is transverse to $Y$, by Lemma~\ref{lem:transuc}. By Theorem \ref{thm:rw}, we can assume that if $W$ is a word in the alphabet $Y\cup\{w_n^{\pm 1}\}$ with no consecutive letters belonging to $Y$, then the path in $S$ based at $s$ and labeled by $W$ is an $(8, c_n)$-quasi-geodesic.

(a) Let us first show that the induced action of  $L_{n}=\langle w_n^{-1}Hw_n, K\rangle \leqslant G$ on $S$ is metrically proper. Choose any $R>0$ and suppose that $g \in L_n$ satisfies $\d(s,g(s))\le R$. The element $g$ can be represented by a word 
\begin{equation}\label{eq:word_for_g}
    W=k_0 w_n^{-1} h_1 {w_n} k_1 \dots w_n^{-1} h_m {w_n}k_{m},
\end{equation}
with $m\in \N\cup \{0\}$, $h_1,\dots,h_m \in H\setminus\{1\}$, $k_0,k_m \in K$ and $k_1,\dots,k_{m-1} \in K\setminus\{1\}$. If we consider $W$ as a reduced word over the alphabet $Y \cup \{w_n^{\pm 1}\}$, then, by Theorem~\ref{thm:rw}, the path $q$ in $S$, based at $s$ and labeled by $W$, is  $(8, c_n)$--quasi-geodesic. It follows that 
\[\|q\| \le 8\, \d(s,g(s))+c_n \le 8R+c_n.\] Recall that by the definition of $q$, $\d(s,k_i(s)) \le \|q\|$, for $i=0,\dots,m$, and $\d(s,h_j(s)) \le \|q\|$, for $j=1,\dots,m$. Hence, $\d(s,k_i(s)) \le 8R+c_n$, for each $i=0,\dots,m$, and there are only finitely many such elements $k_i \in K$ because the action of $K$ on $S$ is proper. Similarly, there are only finitely many possibilities for $h_j \in H$, $j=1,\dots,m$. Therefore the element $g \in L_n$ can take only finitely many values, so the action of $L_n$ on $S$ is proper. 

Now, assume that the orbits $H(s)$ and $K(s)$ are $\eta$-quasi-convex in $S$, for some $\eta \ge 0$. Let us show that the orbit $L_n(s)$ is also quasi-convex (it would follow that every orbit of $L_n$ is quasi-convex, see Lemma~\ref{lem:qc_orbits}). Let $p$ be a geodesic path in $S$ joining two points of this orbit. After applying the translation by an element of $L_n$, we can assume that $p$ starts at $s$ and terminates at $g(s)$, for some $g \in L_n$. Let $W$ be a word as in \eqref{eq:word_for_g}, representing $g$ in $G$ and considered as a reduced word over the alphabet $Y \cup \{w_n^{\pm 1}\}$. Let $q$ be the path in $S$ based at $s$ and labeled by $W$. Then $q$ is $(8, c_n)$-quasi-geodesic, so for every points $x$ of $p$ there is a point $y$ of $q$ such that $\d(x,y) \le \varkappa_n$, where $\varkappa_n \ge 0$ is the constant provided by Lemma~\ref{lem:stab_of_qgeod}. By construction, the path $q$ is a concatenation of geodesic subpaths labeled by $w_n^{\pm 1}$, $k_i$  or $h_j$, and $y$ must belong to one of them.  

Suppose, first, that $y$ lies on a subpath $r$, of $q$, labeled by $k_i$, for some $i=0,\dots,m$:
\[r=[k_0 w_n^{-1} h_1 {w_n} k_1 \dots w_n^{-1} h_i {w_n}(s),k_0 w_n^{-1} h_1 {w_n} k_1 \dots w_n^{-1} h_i {w_n}k_{i}(s)]=u([s,k_i(s)]),\]
where $u=k_0 w_n^{-1} h_1 {w_n} k_1 \dots w_n^{-1} h_i {w_n} \in L_n$. Since the orbit $K(s)$ is $\eta$-quasi-convex,  we have $[s,k_i(s)] \subseteq \mathcal{N}_\eta(K(s))$, whence
\[y \in r \subseteq \mathcal{N}_\eta(uK(s)) \subseteq \mathcal{N}_\eta(L_n(s)).\]

If $y$ lies on a subpath labeled by $w_n$ then it is within $d_n=\d(s,w_n(s))$-distance from an element of the form 
$k_0 w_n^{-1} h_1 {w_n} k_1...w_n^{-1} h_i {w_n} \in L_n$, for some $i=0,\dots,m$. Similarly, $y \in \mathcal{N}_{d_n}(L_n)$ when $y$ belongs to a subpath of $q$ labeled by $w_n^{-1}$. The final possibility is that $y$ belongs to a geodesic subpath $r$, of $q$, labeled by $h_j$, for some $j=1,\dots,m$, so that
\[r=v([s,h_j(s)]), \text{ where } v=k_0 w_n^{-1} h_1 {w_n} k_1\dots k_{j-1}w_n^{-1}.\] Since $[s,h_j(s)] \subseteq \mathcal{N}_\eta(H(s))$, we have
\[y \in \mathcal{N}_\eta(vH(s)) \subseteq \mathcal{N}_{\eta+d_n}(vHw_n(s))
\subseteq \mathcal{N}_{\eta+d_n}(L_n(s)),\]
because $vHw_n=k_0 w_n^{-1} h_1 {w_n} k_1\dots k_{j-1}w_n^{-1}Hw_n \subseteq L_n$. 

Thus we conclude that $x \in \mathcal{N}_{\varkappa_n+\eta+d_n}(L_n(s))$, i.e., $L_n(s)$ is $(\varkappa_n+\eta+d_n)$-quasi-convex.

(b)  Let $N=\max\{\d(s, f(s))\mid f\in \mathcal F\}+1$. If $g \in L_n \setminus K$ is any element, then any reduced word $W$ over the alphabet $Y \cup \{w_n^{\pm 1}\}$ (as in  \eqref{eq:word_for_g}), representing $g$, must contain at least one instance of $w_n$. By Lemma~\ref{lem:long_words}, the latter implies that $\d(s, g(s))\ge N$, so that $g\notin\mathcal F$ asymptotically almost surely. Thus $L_n \cap \mathcal{F}=K \cap \mathcal{F}$. The second equality in (b) can be proved similarly.

(c) It remains to show that $L_{n}$ has infinite index in $G$. If $L_n$ is trivial this is obvious because $G$ contains an element of infinite order, so assume that $L_n \neq \{1\}$. Let $u_n$ be another independent random walk with respect to $\mu$ and consider the subgroup $\langle L_{n}, u_n\rangle$. By Lemma~\ref{lem:long_words}, if $W$ is a reduced word in $Y\cup\{w_n^{\pm 1}\}\cup\{u_n^{\pm 1}\}$, with no two consecutive letters belonging to $Y$ and containing at least one occurrence of $u_n^{\pm 1}$, then $W$ represents a non-trivial element of $G$, in particular, $u_n \neq 1$. This implies that $\langle L_{n}, u_n\rangle\cong L_{n}\ast\langle u_n\rangle$ in $G$. Since $L_{n}$ is non-trivial, it must have infinite index in this free product, hence $|G:L_n|=\infty$.
\end{proof}

\begin{thm}\label{Thm:CC}
Let $G$ be a group with a non-elementary  partially WPD action on a hyperbolic metric space $S$ and suppose that $G$ has no non-trivial finite normal subgroups. Let $\mu$ be a permissible measure on $G$ with generating support. Then the action of $G$ on the closure of $\Sub_\infty^{cc}(G\curvearrowright S)$ in $\Sub(G)$ is topologically $\mu$-mixing.
\end{thm}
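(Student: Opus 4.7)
The plan is to reduce the topological $\mu$-mixing statement directly to Proposition~\ref{p:tt}. Recall that the product topology on $2^G$ has as a basis the sets $\mathcal{U}(F_+,F_-) = \{A\subseteq G \mid F_+\subseteq A,\; F_-\cap A = \emptyset\}$ with $F_+, F_-$ ranging over finite subsets of $G$. Given nonempty open subsets $U, V \subseteq \overline{\Sub_\infty^{cc}(G \curvearrowright S)}$, the density of $\Sub_\infty^{cc}(G \curvearrowright S)$ in its closure produces convex cocompact infinite-index subgroups $H_U\in U$ and $H_V\in V$. Shrinking to basic open neighborhoods of $H_U$ and $H_V$, I may fix finite subsets $F_+^U, F_-^U, F_+^V, F_-^V\subseteq G$ with $F_+^U\subseteq H_U$, $F_-^U\cap H_U=\emptyset$, $\mathcal{U}(F_+^U, F_-^U)\cap \overline{\Sub_\infty^{cc}(G \curvearrowright S)}\subseteq U$, and analogous conditions witnessing $V$.

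Next I would set $\mathcal{F} = F_+^U\cup F_-^U\cup F_+^V\cup F_-^V$ and apply Proposition~\ref{p:tt} with $H := H_V$, $K := H_U$ and this finite set $\mathcal{F}$. The resulting random subgroup $L_n = \langle w_n^{-1}H_V w_n,\, H_U\rangle$ is then, asymptotically almost surely, convex cocompact and of infinite index, with $L_n\cap \mathcal{F} = H_U \cap \mathcal{F}$ and $w_n L_n w_n^{-1}\cap \mathcal{F} = H_V\cap \mathcal{F}$. The first equality forces $L_n \in \mathcal{U}(F_+^U, F_-^U)\cap \overline{\Sub_\infty^{cc}(G \curvearrowright S)}\subseteq U$ (because $L_n\cap F_+^U = H_U\cap F_+^U = F_+^U$ and $L_n\cap F_-^U = H_U\cap F_-^U = \emptyset$); analogously the second equality gives $w_n L_n w_n^{-1}\in V$. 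Hence $w_n(L_n) = w_n L_n w_n^{-1} \in w_n(U)\cap V$ asymptotically almost surely, so $w_n\in N(U,V)$ asymptotically almost surely, which is precisely the statement that $\mu^{\ast n}(N(U,V))\to 1$.

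There is no substantive obstacle beyond Proposition~\ref{p:tt} itself, which already packages the transverse-element construction (Corollary~\ref{cor:cc-trans}) together with the quasi-geodesic random walk estimates (Theorem~\ref{thm:rw}). The only conceptual observation needed at this stage is that the twin equalities $L_n\cap \mathcal{F} = H_U\cap \mathcal{F}$ and $w_n L_n w_n^{-1}\cap \mathcal{F} = H_V\cap \mathcal{F}$ furnish a \emph{single} random convex cocompact subgroup $L_n$ that simultaneously lies in $U$ and has its $w_n$-conjugate in $V$, which is exactly what is needed to conclude $w_n\in N(U,V)$.
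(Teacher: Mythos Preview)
Your proposal is correct and follows essentially the same argument as the paper's proof: both pick convex cocompact infinite-index subgroups $H_U\in U$, $H_V\in V$, choose a finite set $\mathcal{F}$ determining the basic open neighborhoods, and apply Proposition~\ref{p:tt} with $K=H_U$, $H=H_V$ to produce the single subgroup $L_n$ witnessing $w_n\in N(U,V)$ asymptotically almost surely. Your treatment of the basic open sets via separate $F_+$ and $F_-$ subsets is slightly more explicit than the paper's, but the content is identical.
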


\begin{proof}
 Suppose $U$, $V$ are open subsets of the closure of $\Sub_\infty^{cc}(G \curvearrowright S)$ and $K\in U\cap \Sub_\infty^{cc}(G \curvearrowright S)$, $H\in V\cap \Sub_\infty^{cc}(G \curvearrowright S)$. By the definition of the topology on $\Sub_\infty^{cc}(G \curvearrowright S)$, there exists a finite subset $\mathcal F\subset G$ such that for any subgroup $J$, if $J\cap\mathcal F=K\cap\mathcal F$ then $J\in U$ and if $J\cap \mathcal F=H\cap\mathcal F$ then $J\in V$. Now let $w_n$ be a random walk with respect to $\mu$ and let $L_{n}$ be given by Proposition~\ref{p:tt}. Then, asymptotically almost surely, $L_{n}$ belongs to $\Sub_\infty^{cc}(G \curvearrowright S)$, $L_{n}\in U$ and $w_nL_nw_n^{-1}\in V$. This means that $w_n(U)\cap V\neq \emptyset$ asymptotically almost surely. Thus, the action of $G$ on the closure of $\Sub_\infty^{cc}(G \curvearrowright S)$ in $\Sub(G)$ is topologically $\mu$-mixing.
\end{proof}


\section{Relatively quasi-convex subgroups of relatively hyperbolic groups}\label{sec:relhyp}
Here we explain the proof of Theorem \ref{Thm:main2}, that is we show that the action of a relatively hyperbolic group on the closure of its space of infinite index relatively quasi-convex subgroups in $\Sub (G)$ is topologically $\mu$-mixing. In the setting of a relatively hyperbolic group acting on its relative Cayley graph, our notion of a convex cocompact subgroup is equivalent to the notion of a \emph{strongly relatively quasi-convex subgroup} by \cite[Theorem~4.13]{Osi06}. However, there may also be relatively quasi-convex subgroups which are not convex cocompact, for example any infinite peripheral subgroup will be like this. Hence, Theorem~\ref{Thm:main2} does not follow directly from Theorem~\ref{Thm:main1}, though the proofs of these two results are quite similar in spirit.

After establishing some definitions and notation, we will show the existence of a loxodromic, WPD element transverse to any finite collection of infinite index relatively quasi-convex subgroups (Corollary \ref{cor:rh_trans}). For a single infinite index relatively quasi-convex subgroup, such an element was shown to exist in \cite[Proposition 6.4]{AH21}.

The following definition of relative hyperbolicity is due to Bowditch \cite{Bow}. Here a graph is called \emph{fine} if for all $n$, the graph contains finitely many circuits of length $n$, where a circuit is a cycle without self-intersection. 

\begin{defn}\label{defn:relhyp}
 $G$ is hyperbolic relative to the family of subgroups $\{H_\lambda\}_{\lambda \in \Lambda}$ if $G$ acts on a fine, hyperbolic graph $K$ with finite edge stabilizers, finitely many orbits of edges, and each vertex stabilizers is either finite or is conjugate to exactly one element of  $\{H_\lambda\}_{\lambda \in \Lambda}$. 
\end{defn}

For countable groups, this definition is equivalent to several others in the literature, see \cite{Hru}. In particular, it is equivalent to the definitions used by Osin in \cite{Osi06} and \cite{Osi13} which prove the following results respectively.

Let $G$ be a group and let $\{H_\lambda\}_{\lambda \in \Lambda}$ be a family of subgroups of $G$. Assume that there exists a finite subset $X \subseteq G$ such that $G$ is generated by $X \cup \bigcup_{\lambda \in \Lambda} H_\lambda$. In this case $X$ is called a \emph{relative generating set} of $G$, and the Cayley graph $\Gamma(G,X\cup\mathcal{H})$, where $\mathcal{H}=\sqcup_{\lambda \in \Lambda} H_\lambda \setminus\{1\}$, is called a \emph{relative Cayley graph} of $G$.

\begin{thm}\cite[Corollary 2.54]{Osi06}
Suppose that a group $G$ is  hyperbolic relative to a collection of subgroups $\{H_\lambda\}_{\lambda \in \Lambda}$ and $X$ is a finite relative generating set of $G$. Then the relative Cayley graph $\Gamma(G,X\cup\mathcal{H})$, equipped with the standard edge-path metric $\d$, is a hyperbolic metric space.
\end{thm}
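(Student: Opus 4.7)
The plan is to deduce hyperbolicity of $\Gamma(G,X\cup\mathcal H)$ from the hyperbolicity of the fine graph $K$ on which $G$ acts by Bowditch's Definition~\ref{defn:relhyp}, by building a $G$-equivariant quasi-isometry between the two spaces and then invoking quasi-isometry invariance of hyperbolicity for geodesic metric spaces.

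First, I would fix a basepoint $v_0\in K$ with finite stabilizer; such a vertex must exist unless $G$ itself is a peripheral subgroup, a degenerate case in which the relative Cayley graph is bounded and trivially hyperbolic. Define $\phi\colon G\to K^{(0)}$ by $\phi(g)=g\cdot v_0$ and extend equivariantly across edges of $\Gamma(G,X\cup\mathcal H)$. To verify that $\phi$ is coarsely Lipschitz: for each $x\in X$ one has $\d_K(v_0,x\cdot v_0)\le C_1$ because the finitely many edge orbits of $K$ force finitely many vertex orbits; and for $h\in H_\lambda\setminus\{1\}$, since $h$ fixes the peripheral vertex $w_\lambda$ with stabilizer $H_\lambda$, one has $\d_K(v_0,h\cdot v_0)\le 2\,\d_K(v_0,w_\lambda)\le C_2$, uniformly over $\lambda$ since there are only finitely many peripheral orbits.

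The harder direction is a matching linear lower bound on $\d_K(v_0,g\cdot v_0)$ in terms of the relative word length of $g$. Given a $K$-geodesic $\gamma$ from $v_0$ to $g\cdot v_0$, I would break $\gamma$ into maximal subsegments whose interiors avoid peripheral vertices and read off a word in $X\cup\mathcal H$ representing $g$: each such subsegment, by coboundedness on the finite-stabilizer vertices, yields a bounded-length portion in $X$; each peripheral visit yields at most one letter of $\mathcal H$, namely an element of the corresponding conjugate of some $H_\lambda$. Fineness of $K$ is essential to control relative word length by $|\gamma|$: in a fine graph, only finitely many circuits of each bounded length pass through any given vertex, which prevents arbitrarily many short peripheral detours from collapsing the relative word length.

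The main obstacle is precisely this fineness step. Peripheral vertices in $K$ have infinite valence (one incident edge per non-identity element of their stabilizer), so two elements of $H_\lambda$ arbitrarily far apart in the word metric on $H_\lambda$ still lie at $K$-distance $\le 2$ via their common peripheral vertex. One must rule out that a $K$-geodesic exploits such peripheral shortcuts too many times, and fineness is the combinatorial tool that accomplishes this by bounding the configurations of short cycles at each vertex. The cleanest implementation I would use is to invoke the equivalence of Bowditch's and Osin's definitions of relative hyperbolicity established in \cite{Hru}, which reduces the claim to the setting in which Osin proves \cite[Corollary 2.54]{Osi06}; alternatively, one can carry out the bookkeeping directly in $K$ using fineness as in \cite{Bow}.
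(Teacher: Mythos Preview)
The paper does not give its own proof of this statement: it is recorded as a citation of \cite[Corollary~2.54]{Osi06}, immediately after noting that Bowditch's definition (Definition~\ref{defn:relhyp}) is equivalent to Osin's via \cite{Hru}. Your fallback plan---invoke the equivalence of definitions from \cite{Hru} and then appeal to Osin's result---is therefore exactly what the paper does, and nothing more is required.

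Your direct approach, building a $G$-equivariant quasi-isometry between $K$ and $\Gamma(G,X\cup\mathcal H)$ and then using quasi-isometry invariance of hyperbolicity, is also correct and standard. However, you have mislocated the difficulty. You worry that a $K$-geodesic might ``exploit peripheral shortcuts too many times'' and that fineness is needed to rule this out, but this is not an obstacle at all: a passage through a peripheral vertex in $K$ corresponds to a single letter of $\mathcal H$ in the relative Cayley graph, so such shortcuts contribute at most a bounded amount to the relative word length per edge of $\gamma$. The Lipschitz bound in the hard direction follows from cocompactness of the $G$-action on edges of $K$ together with the description of vertex stabilizers, via a \v{S}varc--Milnor style argument; fineness plays no role. (Fineness is what distinguishes genuine relative hyperbolicity from Farb's weak relative hyperbolicity---it is needed for the linear relative isoperimetric inequality and bounded coset penetration---but the present statement only asserts hyperbolicity of the relative Cayley graph, which is the ``weak'' part.)
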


\begin{thm}\cite[Proposition 5.2]{Osi13}\label{thm:rh_acy}
Suppose that a group $G$ is  hyperbolic relative to a collection of subgroups $\{H_\lambda\}_{\lambda \in \Lambda}$ and $X$ is a finite relative generating set of $G$. Then the action of $G$ on the relative Cayley graph $\Gamma(G,X\cup\mathcal{H})$ is acylindrical.
\end{thm}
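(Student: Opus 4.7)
The plan is to deduce acylindricity of the $G$-action on $\Gamma = \Gamma(G, X \cup \mathcal{H})$ from the properties of Bowditch's graph $K$ supplied by Definition~\ref{defn:relhyp}, most crucially its fineness. Unfolding the definition of acylindricity, I would aim to produce, for each $\e \geq 0$, constants $R = R(\e)$ and $N = N(\e)$ such that any vertices $v, w \in \Gamma$ with $\d(v,w) \geq R$ satisfy
\[
\left|\{g \in G \mid \d(v, gv) \leq \e, \, \d(w, gw) \leq \e\}\right| \leq N.
\]
By $G$-equivariance I may take $v = 1$, and by cocompactness of the $G$-action on edges of $\Gamma$ it suffices to handle a single $G$-orbit of the pair $(v, w)$.

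First, I would set up a coarsely $G$-equivariant comparison between $\Gamma$ and $K$ that identifies the coset vertices of both graphs and converts each peripheral edge of $\Gamma$ into a short path in $K$ passing through a single peripheral vertex. Under this comparison, displacement by at most $\e$ in $\Gamma$ translates into bounded combinatorial displacement in $K$, up to ambiguity produced by peripheral edges, which will later be absorbed by the finiteness of edge stabilizers in $K$.

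The geometric core is then a loop construction. Given $g$ in the alleged acylindricity set, concatenate a short path from $1$ to $g$ (length $\leq \e$), the translated geodesic $g \cdot [1, w]$, a short path from $gw$ to $w$, and the reversed geodesic $[w, 1]$. Hyperbolicity of $\Gamma$ forces the two long sides to fellow-travel within a uniformly bounded distance along a middle segment whose length grows with $\d(1, w)$. Pushed forward into $K$, the resulting circuit has length at most $2\d(1, w) + 2\e$ and shares a long spine with the image of $[1, w]$. Fineness of $K$ then bounds, in terms of $\e$ alone, the number of such circuits that can pass through any fixed edge on that spine; finiteness of edge stabilizers converts this combinatorial count into a bound on the number of $g$.

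The main obstacle is that balls of bounded radius in $\Gamma$ are \emph{not} finite: a single peripheral edge already covers an entire infinite subgroup $H_\lambda$, so no naive word-counting argument in $X \cup \mathcal{H}$ can bound the acylindricity set. Bowditch's fineness axiom is tailored precisely to overcome this obstruction, and the technical heart of the argument lies in arranging the fellow-travel segment of $g \cdot [1, w]$ so that fineness can be applied through a fixed reference edge, producing a bound on the number of admissible $g$ that is uniform in $w$ once $R$ is chosen large enough to guarantee that such a reference edge exists.
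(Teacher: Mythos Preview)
The paper does not prove this theorem; it is stated purely as a citation of \cite[Proposition~5.2]{Osi13} and used as background input for Section~\ref{sec:relhyp}. There is therefore no proof in the paper to compare your proposal against.

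Regarding your sketch on its own merits: the overall strategy of transporting the question to a fine hyperbolic graph and exploiting fineness together with finite edge stabilizers is a legitimate route to acylindricity, and is close in spirit to arguments appearing in Bowditch's and Dahmani--Guirardel--Osin's work. However, your description of the comparison is imprecise. The relative Cayley graph $\Gamma(G,X\cup\mathcal H)$ has vertex set $G$, not left cosets, so ``identifying the coset vertices of both graphs'' does not literally make sense; what you presumably have in mind is the coned-off Cayley graph (which does have coset/cone vertices and is $G$-equivariantly quasi-isometric to $\Gamma$), and you would need to make that intermediary explicit. More seriously, the step ``fineness of $K$ bounds, in terms of $\e$ alone, the number of such circuits that can pass through any fixed edge'' is not what fineness gives you: fineness bounds the number of circuits of a given \emph{length} through a fixed edge, and your circuits have length roughly $2\d(1,w)+2\e$, which grows with $\d(1,w)$. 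To make the argument work you must first use hyperbolicity to extract, from the quadrilateral, a circuit of length bounded in terms of $\e$ and $\delta$ only (by cutting across where the two long sides fellow-travel), and then apply fineness to that short circuit. Osin's actual proof in \cite{Osi13} proceeds differently, via the local finiteness of the relative metrics $\widehat d_\lambda$ on the peripheral subgroups and a bounded-coset-penetration style argument carried out directly in $\Gamma(G,X\cup\mathcal H)$.
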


For the rest of this section, we fix a group $G$ which is  hyperbolic relative to a collection of subgroups $\{H_\lambda\}_{\lambda \in \Lambda}$ and let $X$ be a finite relative generating set of $G$. We will also assume that $G$ is a \emph{non-elementary relatively hyperbolic group}, which means that $G$ is not virtually cyclic and each $H_\lambda$ is a proper subgroup of $G$. Let $S=\Gamma(G,X\cup\mathcal{H})$ be the corresponding relative Cayley graph of $G$. We will identify $G$ with the set of vertices of $S$ and will always use $1 \in G$ as the basepoint.
By the previous two results, $S$ is a hyperbolic metric space and the action of $G$ on $S$ is acylindrical, hence all loxodromic elements of $G$ are WPD elements. We also note that an element $g\in G$ is loxodromic if and only if $g$ has infinite order and is not conjugate to an element of $H_\lambda$, for any $\lambda \in \Lambda$, by \cite[Corollary~4.20]{Osi06}.

Any subset $Y \subseteq G$ gives rise to the \emph{extended word metric} $\d_Y:G \times G \to \R\cup \{\infty\}$, where $\d_Y(g,h)$ is the length of a shortest word over
$Y^{\pm 1}$, representing the element $g^{-1}h$ in $G$, if $g^{-1}h \in \langle Y \rangle$, or $\d_Y(g,h)=\infty$ if $g^{-1}h \notin \langle Y \rangle $ in $G$.

Consider a path $p$ in $S$. Then the edges of $p$ are labeled by elements from $X \sqcup \mathcal{H}$. We will say that $p$ is \emph{tame} if the following condition holds. Suppose that $e_1$ and $e_2$ are two distinct edges of $p$ starting at vertices $g_1,g_2 \in G$ and labeled by elements from $H_\lambda$, for some $\lambda \in \Lambda$. Then $g_1$, $g_2$ belong to different left cosets of $H_\lambda$ in $G$, i.e., $g_2 \notin g_1 H_\lambda$.

Since any two vertices in the same $H_\lambda$-coset are at distance at most $1$ in $S$, we see that any geodesic path in $S$ is automatically tame. It is also obvious that any path labeled by a word in $X$ is tame.

\begin{lem}\label{lem:rh-dY}
Using the above notation, there exists a finite subset $Y \subseteq G$ such that the following are true for the extended word metric $\d_Y(\cdot,\cdot)$ on $G$.
\begin{itemize}
  \item[(i)] For any $\lambda \ge 1$ and $c \ge 0$ there exists $\xi \ge 0$ such that if $p$ and $q$ are $(\lambda,c)$-quasi-geodesics in $S$, $p$ is tame and  $p_-=q_-$, $p_+=q_+$ then for any vertex $x$ of $p$ there is a vertex $y$ of $q$ satisfying $\d_Y(x,y)\le \xi$.
   \item[(ii)] There is a constant $\nu \ge 0$ such that given any geodesic triangle in $S$ with sides $p$, $q$, $r$, for any vertex $y \in p$
      there is a vertex $z \in q \cup r$ satisfying $\d_Y(y,z) \le \nu$.
\end{itemize}
\end{lem}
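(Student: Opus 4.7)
The plan is to choose $Y \subseteq G$ to be the finite subset furnished by Osin's analysis of the relative Cayley graph $S = \Gamma(G,X\cup\mathcal{H})$; concretely, take $Y$ to contain $X$ together with finitely many additional elements that witness the bounded coset penetration (BCP) phenomenon from \cite{Osi06}. The property such a $Y$ enjoys is that whenever two $(\lambda,c)$-quasi-geodesics in $S$ with common endpoints both enter a peripheral coset $gH_\mu$, their corresponding entry and exit points lie at uniformly $\d_Y$-bounded distance, and whenever one such quasi-geodesic enters a peripheral coset that the other does not, the entry and exit points along the first are themselves $\d_Y$-close. Crucially, $Y$ is fixed independently of $(\lambda,c)$; the constants $\xi$ and $\nu$ are then allowed to depend on the quasi-geodesic data.

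For part (i), I would first invoke Lemma~\ref{lem:stab_of_qgeod} in the $\delta$-hyperbolic space $S$ to see that for every vertex $x$ of $p$ there is a vertex $y'$ of $q$ with $\d_S(x,y')\le \varkappa=\varkappa(\lambda,c)$. The difficulty is that a short $S$-path between $x$ and $y'$ may traverse edges labelled by arbitrarily large peripheral elements, so this $\d_S$-bound does not immediately yield any $\d_Y$-bound. The tameness of $p$ is used precisely here: because $p$ enters each $H_\mu$-coset at most once (along at most one peripheral edge), the BCP estimates force any coset visited by $p$ to be entered and exited by $q$ at vertices $\d_Y$-close to the corresponding endpoints on $p$. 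Distinguishing two cases --- $x$ lies on an $X$-edge of $p$, versus $x$ is an endpoint of a peripheral edge of $p$ --- and in each case choosing the target vertex $y$ on $q$ accordingly, yields a uniform $\xi$ with $\d_Y(x,y)\le \xi$.

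For part (ii), the idea is to refine the $\delta$-slimness of geodesic triangles in $S$ to the $\d_Y$-metric. Any geodesic in $S$ is automatically tame, since two distinct edges labelled by elements in the same $H_\mu$-coset would produce an immediate short-cut; hence each side of the geodesic triangle $p\cup q\cup r$ qualifies as a tame quasi-geodesic to which the BCP bookkeeping of part (i) applies. Given $y\in p$, the $\delta$-slimness produces $z\in q\cup r$ with $\d_S(y,z)\le \delta$, and the BCP estimates convert this $S$-bound into a uniform $\d_Y$-bound $\nu$, independent of the triangle.

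The principal obstacle is arranging that a single finite $Y$ simultaneously controls every pair $(\lambda,c)$ in (i) and every geodesic triangle in (ii). This is possible because the finite set needed to witness BCP in Osin's framework is determined by the combinatorial data $(G,\{H_\mu\}_{\mu\in\Lambda},X)$ alone and not by any particular quasi-geodesic or triangle; once $Y$ is fixed, both (i) and (ii) reduce to assembling and specialising Osin's coset-penetration estimates rather than reproving them.
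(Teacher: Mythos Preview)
Your proposal and the paper take essentially the same route: both rest entirely on Osin's work \cite{Osi06}. The paper's proof is in fact nothing more than two direct citations --- Proposition~3.15 of \cite{Osi06} for (i) and Theorem~3.26 of \cite{Osi06} for (ii) --- together with the remark that the same arguments go through when $G$ is not finitely generated (cf.\ \cite[Lemma~5.3]{Osi13}). You have simply unpacked what lies behind those citations, sketching the BCP mechanism that drives them and correctly noting that the finite set $Y$ depends only on $(G,\{H_\lambda\},X)$, not on $(\lambda,c)$.

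One small caution on your sketch of (ii): the sentence ``the $\delta$-slimness produces $z\in q\cup r$ with $\d_S(y,z)\le \delta$, and the BCP estimates convert this $S$-bound into a uniform $\d_Y$-bound'' compresses too much. A bound $\d_S(y,z)\le\delta$ by itself does \emph{not} yield any $\d_Y$-bound, since a single $S$-edge may be labelled by an arbitrarily large peripheral element. The upgrade to $\d_Y$ genuinely uses the triangular configuration: one must track how peripheral components of the three geodesic sides pair up, which is exactly the content of \cite[Theorem~3.26]{Osi06}. Since you ultimately defer to Osin anyway, this is not a gap in your argument, but the intermediate heuristic is misleading as written.
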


\begin{proof} 
When $G$ is finitely generated, (ii) is given by \cite[Theorem 3.26]{Osi06}. The same proof works in general (cf. \cite[Lemma 5.3]{Osi13}). 

Similarly, when $G$ is finitely generated, (i) is given by \cite[Proposition~3.15]{Osi06}. Again the proof of \cite[Proposition~3.15]{Osi06} works in the general case without any significant changes.
\end{proof}

\begin{rem}\label{rem:quadr-2nu-slim} Claim (ii) of Lemma \ref{lem:rh-dY} implies that if $Q$ is a geodesic quadrangle in $S$ and $q$ is a side of $Q$ and
then any vertex of $q$ is contained in the $2\nu$-neighborhood of the union of the other sides of $Q$ with respect to the extended word metric $\d_Y(\cdot,\cdot)$.
\end{rem}

There are several equivalent definitions of relatively quasi-convex subgroups, see \cite{Hru}. We will use the one from \cite[Definition 6.10]{Hru}.

\begin{defn}\label{def:rel_qc_sbgp} We will say that a subgroup $H \leqslant G$ is \emph{relatively quasi-convex} if there exists a finite subset $Z \subseteq G$
and a constant $\eta \ge 0$ such that the following is satisfied.
If $r$ is a geodesic segment in $S=\Gamma(G,X\cup\mathcal{H})$ with $r_-,r_+ \in H$, then for any vertex $z$ of $r$ there
is $h \in H$ such that $\d_Z(z,h) \le \eta$.
\end{defn}

\begin{prop} \label{prop:rel_qc-rwpd} Let $H \leqslant G$ be a relatively quasi-convex subgroup. Then every loxodromic element $g \in G$
is $H$-\apt{} (with respect to the natural action of $G$ on $\Gamma(G,X\cup\mathcal{H})$).
\end{prop}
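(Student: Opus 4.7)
The strategy is to verify Definition~\ref{defn:rwpd} directly, working in the extended word metric $\d_Y$ of Lemma~\ref{lem:rh-dY} in place of $\d = \d_{X\cup\mathcal H}$; the argument parallels the proof of Proposition~\ref{prop:rwpds} but uses the tools of relative hyperbolicity. By Remark~\ref{rem:apt-indep_of_basepoint} I may fix $s = 1 \in G \subseteq S$. Let $\ell$ be a $(\lambda,c)$-quasi-geodesic axis of $g$ based at $1$; take $Y$ from Lemma~\ref{lem:rh-dY} and $Z, \eta$ from Definition~\ref{def:rel_qc_sbgp}, enlarging $Y$ if necessary so that $Z \subseteq Y$. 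Let $M = \max_{y \in Y} \d(1,y)$, so that $\d(a,b) \leq M\,\d_Y(a,b)$ for all $a, b \in G$, and let $\xi, \nu$ be the constants from Lemma~\ref{lem:rh-dY}.

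Given $C \geq 0$ and $u \in G$ satisfying $\d(u,1) \leq C$ and $\d(ug^N, h) \leq C$ for some $h \in H$, I would form the geodesic quadrangle $Q$ with vertices $1, u, ug^N, h$ and sides $\delta = [1,u]$, $\gamma = [u,ug^N]$, $\beta = [ug^N,h]$, $\alpha = [h,1]$, noting $\|\delta\|, \|\beta\| \leq C$. The geodesic $\gamma$ is tame with the same endpoints as the $(\lambda,c)$-quasi-geodesic $q = u(\ell_{[1,g^N]})$, whose vertices are $u, ug, \ldots, ug^N$. Lemma~\ref{lem:rh-dY}(i) then attaches to each vertex $v_j \in \gamma$ an index $k(j) \in \{0, \ldots, N\}$ with $\d_Y(v_j, ug^{k(j)}) \leq \xi$, and one can take $k(0) = 0$ and $k(L) = N$ with $L = \|\gamma\|$. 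Since $\d(v_j, v_{j+1}) = 1$, the triangle inequality yields $\d(g^{k(j+1)-k(j)}, 1) \leq 2M\xi + 1$, and loxodromicity of $g$ forces $|k(j+1) - k(j)| \leq K_0$ for some constant $K_0 = K_0(g, \xi, M)$. A discrete intermediate value argument on $k$ then produces $j^*$ with $\lfloor N/2\rfloor - K_0 < k(j^*) \leq \lfloor N/2\rfloor$; set $i_0 = k(j^*)$.

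By Remark~\ref{rem:quadr-2nu-slim} the vertex $v_{j^*}$ lies within $\d_Y$-distance $2\nu$ of some vertex $w$ on $\alpha \cup \beta \cup \delta$. Taking $N$ large enough that $\d(1, g^{\lfloor N/2\rfloor - K_0}) > M\xi + 2M\nu + C$, which is possible by loxodromicity, the cases $w \in \delta$ and $w \in \beta$ would force $\d(1, g^{i_0}) \leq M\xi + 2M\nu + C$ and $\d(1, g^{N-i_0}) \leq M\xi + 2M\nu + C$ respectively, both contradicting the loxodromic lower bounds. Hence $w$ lies on the geodesic $\alpha$ between points of $H$, so Definition~\ref{def:rel_qc_sbgp} produces $h_w \in H$ with $\d_Z(w, h_w) \leq \eta$, whence $\d_Y(w, h_w) \leq \eta$ as well. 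Combining the estimates yields $\d_Y(h_w^{-1}ug^{i_0}, 1) \leq \xi + 2\nu + \eta =: R$, so $u \in H g^{-i_0} B_R^Y$ where $B_R^Y = \{v \in G : \d_Y(v,1) \leq R\}$ is a finite subset of $G$. As $i_0$ ranges over the finite set $I_0 \subseteq \Z$ of integers in $(\lfloor N/2\rfloor - K_0, \lfloor N/2\rfloor]$, the finite union $U = \bigcup_{i \in I_0} g^{-i} B_R^Y$ satisfies $u \in HU$, verifying Definition~\ref{defn:rwpd}. The principal obstacle is the one-sided nature of Lemma~\ref{lem:rh-dY}(i): a quasi-geodesic axis of $g$ need not be tame, so vertices of $q$ are not directly known to track $\gamma$ in $\d_Y$, and the discrete intermediate value argument for $k(\cdot)$ is precisely what lets us locate an axis vertex $ug^{i_0}$ near $\gamma$.
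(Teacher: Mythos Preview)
The overall structure parallels the paper's, but there is a gap in your application of Lemma~\ref{lem:rh-dY}(i). You apply the lemma with $p=\gamma$ (tame) and $q=u(\ell_{[1,g^N]})$ and conclude that each vertex $v_j$ of $\gamma$ satisfies $\d_Y(v_j,ug^{k(j)})\le\xi$ for some $k(j)$. But the vertices of $q$ are not only the points $ug^k$: the axis $\ell$ is a concatenation of geodesic segments $[g^{i-1},g^i]$ in $S$, and whenever $\d(1,g)>1$ each such segment has intermediate vertices, which need not lie $\d_Y$-close to any power $ug^k$ (the extended metric $\d_Y$ can be arbitrarily large, even infinite, at such vertices). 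So the assignment $j\mapsto k(j)$ is not defined by the lemma, and the intermediate value argument built on it is unsupported. Your closing remark correctly notes that the axis need not be tame, but the very same observation forces the axis to have extra vertices; the two issues come together.

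The paper resolves this with one move that you did not make: enlarge the finite relative generating set $X$ so that $g\in X$ and $Y,Z\subseteq X$. Then the path from $u$ to $ug^{2L}$ labelled by the word $g^{2L}$ is tame (all edge labels lie in $X$, not in any $H_\lambda$) and its vertices are \emph{exactly} $u,ug,\ldots,ug^{2L}$. One may now apply Lemma~\ref{lem:rh-dY}(i) in the other direction---from this tame axis to the geodesic side---obtaining directly a vertex $y\in[u,ug^{2L}]$ with $\d_Y(ug^L,y)\le\xi$, with no intermediate value step needed. The inclusion $Y\subseteq X$ also gives $\d\le\d_X\le\d_Y$, so the ``far from the short sides'' estimate is immediate and there is no need for your constant $M$. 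Your argument can be repaired either by enlarging $X$ in the same way (which makes the vertices of $q$ exactly the powers you want and, incidentally, makes the axis tame) or by carrying the finite set of intermediate vertices of $[1,g]$ through to the final finite set $U$; either repair brings you essentially to the paper's proof.
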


\begin{proof} Let $g \in G$ be a loxodromic element with respect to the natural action of $G$ on $S=\Gamma(G,X\cup\mathcal{H})$.
Let $\d(\cdot,\cdot)$ denote that standard path metric on $S$. Suppose that $Z \subseteq G$ and $\eta \ge 0$ are the finite subset and the constant from Definition \ref{def:rel_qc_sbgp},
and $Y \subseteq G$ is the finite subset provided by Lemma \ref{lem:rh-dY}. Without loss of generality, we can further assume that
$g \in X$ and $Y,Z \subseteq X$, as relative hyperbolicity of $G$ is independent of the  choice of a finite relative generating set (see \cite[Theorem~2.34]{Osi06}).

In view of Remark~\ref{rem:apt-indep_of_basepoint}, it is enough to check that the statement of Definition~\ref{defn:rwpd} holds for $s=1 \in S$.
Since the element $g \in X$ is loxodromic, there exist $\lambda \ge 1$ and $c \ge 0$ such that any path labelled by a power of $g$ in $S$ is $(\lambda,c)$-quasi-geodesic.
Let $\xi,\nu \ge 0$ be the constants given by Lemma~\ref{lem:rh-dY}.

For any given $C \ge 0$, we can choose $L \in \N$ so that $\d(1,g^L) > \xi+2\nu+C$, and suppose that $u \in G$ satisfies $\d(1,u) \le C$ and $\d(ug^{2L},h) \le C$, for some
$h \in H$.

Let $Q$ be a geodesic quadrangle in $S$ with vertices $1$, $u$, $ug^{2L}$ and $h$, and let $p$ be the $(\lambda,c)$-quasi-geodesic path in $S$ starting at $u$ and
labelled by $g^{2L}$. Then $p$ is tame and $x=ug^L$ is a vertex of $p$, and so, by Lemma~\ref{lem:rh-dY}.(i), $\d_Y(x,y) \le \xi$ for some vertex $y$ on the side $[u,ug^{2L}]$ of $Q$.

Since $Y \subseteq X$, it follows that $\d(x,y) \le \d_X(x,y) \le \d_Y(x,y) \le \xi$, and so
\[\d(y,u) \ge \d(u,x)-\d(y,x)\ge \d(u,ug^L)-\xi=\d(1,g^L)- \xi>2\nu+C. \]
Similarly, $\d(y,ug^{2L})> 2\nu+C$.
Therefore $y$ cannot be contained within $2\nu$-neighborhoods of the sides $[1,u]$ and $[ug^{2L},h]$, of $Q$ in $S$, so, in view of Remark \ref{rem:quadr-2nu-slim}
and the fact that $Y \subseteq X$, we can conclude that there must exist a vertex $z$ on the side $[1,h]$ such that $\d_Y(y,z) \le 2\nu$.

Finally, by the relative quasi-convexity of $H$, we know that $\d_Z(z,h_1) \le \eta$ for some $h_1 \in H$. Recalling that $Y\cup Z\subseteq X$, we deduce that
\begin{equation}\label{eq:d_X}
\d_X(x,h_1)\le \d_X(x,z)+\d_X(z,h_1)\le \d_Y(x,z)+\d_Z(z,h_1)\le \xi+2\nu+\eta.
\end{equation}

Let $U=\{f \in G \mid \d_X(1,f) \le \xi+2\nu+\eta\}$. Then $U$ is a finite subset of $G$, because $X \subseteq G$ is finite, and \eqref{eq:d_X}
implies that $h_1^{-1}x=h_1^{-1}ug^L \in U$. Thus $u  \in h_1Ug^{-L} \subseteq HU'$, where $U'$ is the finite subset of $G$ defined by $U'=Ug^{-L}$.
Therefore \[\{u \in G \mid \d(1,u) \le C, \d(ug^{2L},H) \le C\} \subseteq HU',\]
i.e., $g$ is $H$-\apt{}. Thus the proposition is proved.
\end{proof}

Combining Proposition \ref{prop:rel_qc-rwpd} and Corollary~\ref{cor:ex_of_trans}, we obtain the following.

\begin{cor}\label{cor:rh_trans}
Suppose that $H_1,\dots,H_k$ are relatively quasi-convex subgroups of $G$ and $F \leqslant G$ contains a loxodromic element. If $|F:(F \cap vH_iv^{-1})|=\infty$, for all $v \in G$ and $i =1,\dots,k$, then $F$ contains a loxodromic element $f$ which is transverse to $H_i$, for each $i=1,\dots,k$. In particular, if each $H_i$ has infinite index in $G$, then $G$ contains a loxodromic element $f$ which is transverse to $H_i$, for each $i=1,\dots,k$. 
\end{cor}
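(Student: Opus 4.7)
The plan is to reduce Corollary~\ref{cor:rh_trans} to Corollary~\ref{cor:ex_of_trans} by verifying its three hypotheses for the natural $G$-action on the relative Cayley graph $S=\Gamma(G,X\cup\mathcal H)$: (a) each $H_i$ has quasi-convex orbits in $S$; (b) $F$ contains a loxodromic WPD element $g$ that is $H_i$-\apt{} for every $i$; and (c) the index condition on $F\cap vH_iv^{-1}$ holds. Only (c) is given outright, so the work is to establish (a) and (b).

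For (a), I would unpack Definition~\ref{def:rel_qc_sbgp}: relative quasi-convexity of $H_i$ provides a finite $Z_i\subseteq G$ and $\eta_i\ge 0$ so that on any geodesic $r\subseteq S$ with endpoints in $H_i$ each vertex $z$ satisfies $\d_{Z_i}(z,h)\le\eta_i$ for some $h\in H_i$. Since $Z_i$ is finite, the length of each letter of $Z_i$ in the word metric $\d$ on $S$ is bounded by some constant $D_i$, and therefore $\d(z,h)\le D_i\eta_i$. This shows that the $H_i$-orbit of $1\in S$ is $(D_i\eta_i)$-quasi-convex, and by Lemma~\ref{lem:qc_orbits} every $H_i$-orbit in $S$ is quasi-convex.

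For (b), the cited theorem of Osin gives that the $G$-action on $S$ is acylindrical; in particular every loxodromic element of $G$ is WPD. The hypothesis provides a loxodromic $g\in F$, so $g$ is automatically a loxodromic WPD element of $F$. By Proposition~\ref{prop:rel_qc-rwpd}, $g$ is $H_i$-\apt{} for every $i=1,\dots,k$. Together with (a) and hypothesis (c), the assumptions of Corollary~\ref{cor:ex_of_trans} are met, and it produces a loxodromic WPD element $f\in F$ transverse to each $H_i$, proving the main statement.

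For the ``in particular'' clause I would take $F=G$. Non-elementarity of the relatively hyperbolic group $G$ ensures the existence of a loxodromic element (equivalently, an infinite-order element not conjugate into any $H_\lambda$, as recorded in the remark after Definition~\ref{defn:relhyp}). The index condition is automatic, since for each $v\in G$ and each $i$ one has $|G:(G\cap vH_iv^{-1})|=|G:vH_iv^{-1}|=|G:H_i|=\infty$. Applying the first part completes the proof. No step is a serious obstacle; the only subtle point is the passage from relative quasi-convexity (controlled in the auxiliary metric $\d_Z$) to quasi-convex orbits in the actual hyperbolic space $S$, which is what allows Corollary~\ref{cor:ex_of_trans} to be invoked.
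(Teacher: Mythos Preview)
Your proposal is correct and follows exactly the approach indicated in the paper, which simply says the corollary is obtained by combining Proposition~\ref{prop:rel_qc-rwpd} with Corollary~\ref{cor:ex_of_trans}. You have correctly expanded this one-line justification by verifying the quasi-convexity of the $H_i$-orbits in $S$ (via the passage from the $\d_{Z_i}$-bound in Definition~\ref{def:rel_qc_sbgp} to a $\d$-bound), noting that acylindricity makes the given loxodromic element WPD, and handling the ``in particular'' clause by taking $F=G$.
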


Next we show that an analogue of Proposition \ref{p:tt} holds in the relatively quasi-convex setting. The argument is very similar to the proof of Proposition~\ref{p:tt}, the only difference is that we have to deal with relative quasi-convexity.

\begin{prop}\label{prop:relqctt}
Let $G$ be a non-elementary relatively hyperbolic group with $E(G)=\{1\}$.  Let $H$ and $K$ be infinite index relatively quasi-convex subgroups of $G$ and let $\mathcal{F}$ be a finite subset of $G$. Suppose that $\mu$ is a permissible probability measure on $G$ with generating support and let $w_n$ be a random walk with respect to $\mu$. Then the subgroup $L_n=\langle w_n^{-1}Hw_n, K\rangle$ will satisfy the following conditions asymptotically almost surely.
\begin{enumerate}
\item[(a)] $L_n$ is  relatively quasi-convex in $G$;
\item[(b)] $L_n\cap \mathcal F=K\cap \mathcal F$ and $w_nL_nw_n^{-1}\cap \mathcal F=H\cap \mathcal F$;
\item[(c)] $L_n$  has infinite index in $G$.
\end{enumerate}
\end{prop}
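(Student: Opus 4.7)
My plan is to mimic the proof of Proposition~\ref{p:tt}, using the relatively hyperbolic tools developed in this section. First, since the action of $G$ on the relative Cayley graph $S=\Gamma(G,X\cup\mathcal{H})$ is acylindrical, every loxodromic element is automatically WPD, so Corollary~\ref{cor:rh_trans} (applied with $F=G$) supplies a loxodromic element $f\in G$ transverse to both $H$ and $K$. Relative quasi-convexity of $H$ and $K$ forces their orbits $H(s)$ and $K(s)$ to be quasi-convex in $S$ (the inequality $\d\le(\max_{z\in Z}\d(1,z))\,\d_Z$ lets us translate Definition~\ref{def:rel_qc_sbgp} into quasi-convexity in the path metric $\d$), and consequently the orbit of $Y:=(H\cup K)\setminus\{1\}$ is quasi-convex in $S$; Lemma~\ref{lem:transuc} then yields that $f$ is transverse to $Y$. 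Because $\mu$ is permissible with generating support, $\Gamma_\mu=G\ni f$, so Theorem~\ref{thm:rw} gives, asymptotically almost surely, that every reduced word over $Y\cup\{w_n^{\pm 1}\}$ with no two consecutive letters in $Y$ labels an $(8,c_n)$-quasi-geodesic path in $S$ based at $s=1$.

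Conditions (b) and (c) will be obtained exactly as in the proof of Proposition~\ref{p:tt}. For (b), any element of $L_n\setminus K$ is represented by a reduced word over $Y\cup\{w_n^{\pm 1}\}$ containing at least one letter $w_n^{\pm 1}$, so Lemma~\ref{lem:long_words} shows its $S$-distance from $1$ tends to infinity with $n$, forcing $L_n\cap\mathcal{F}=K\cap\mathcal{F}$ asymptotically almost surely; the equality $w_nL_nw_n^{-1}\cap\mathcal{F}=H\cap\mathcal{F}$ follows symmetrically. For (c), we sample an independent random walk $u_n$ with respect to $\mu$ and apply Lemma~\ref{lem:long_words} to deduce $\langle L_n,u_n\rangle\cong L_n\ast\langle u_n\rangle$ asymptotically almost surely, which forces $|G:L_n|=\infty$ since $L_n$ is non-trivial asymptotically almost surely.

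For (a), we shall verify Definition~\ref{def:rel_qc_sbgp} directly, permitting the witnessing finite set $Z_n\subseteq G$ and constant $\eta_n$ to depend on $n$. Given a geodesic $r$ in $S$ with endpoints in $L_n$, we translate so that $r_-=1$ and $r_+=g\in L_n$, write $g$ as a reduced word $W=k_0w_n^{-1}h_1w_nk_1\cdots w_n^{-1}h_mw_nk_m$, and let $q$ be the corresponding labelled path based at $1$, which is $(8,c_n)$-quasi-geodesic. Since $r$ is a geodesic, hence tame, Lemma~\ref{lem:rh-dY}(i) produces a finite $Y_0\subseteq G$ and $\xi_n\ge 0$ such that each vertex $z$ of $r$ satisfies $\d_{Y_0}(z,y)\le\xi_n$ for some vertex $y$ of $q$. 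We take $Z_n$ to contain $Y_0$, the finite sets $Z_H,Z_K$ witnessing relative quasi-convexity of $H$ and $K$, the element $w_n$, and the vertex set of a fixed geodesic from $1$ to $w_n$ in $S$. A case analysis on the position of $y$ in $q$ will show $\d_{Z_n}(y,L_n)\le\eta_n'$ for a suitable $\eta_n'$: word-boundary vertices of $q$ lie in $L_n\cup L_n w_n^{-1}\cup L_n w_n^{-1}H$, hence within $\d_{Z_n}$-distance $1+\eta_H$ of $L_n$ via $w_n\in Z_n$ and relative quasi-convexity of $H$; interior vertices on a $Y$-labelled subpath are close in $\d_{Z_H\cup Z_K}$ to a translate of $H$ or $K$ contained in $L_n\cup L_n w_n^{-1}$; and interior vertices on a $w_n^{\pm 1}$-labelled subpath lie at $\d_{Z_n}$-distance $1$ from their adjacent word-boundary endpoint by the inclusion in $Z_n$ of the geodesic spelling of $w_n$. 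Setting $\eta_n=\xi_n+\eta_n'$ then completes the verification.

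The principal obstacle, absent from Proposition~\ref{p:tt}, is the treatment of intermediate vertices on $w_n^{\pm 1}$-labelled subpaths of $q$: such vertices are not close to $L_n$ in any finite generating set of $G$ chosen uniformly in $n$, so the convex cocompact argument giving uniform quasi-convexity of the orbit $L_n(s)$ is unavailable here. The remedy is to exploit the flexibility of Definition~\ref{def:rel_qc_sbgp}, which allows $Z_n$ and $\eta_n$ to depend on the subgroup $L_n$: enlarging $Z_n$ to include the vertices along a fixed geodesic spelling of $w_n$ renders these problematic vertices $\d_{Z_n}$-adjacent to word-boundary vertices of $q$, which are manifestly close to $L_n$. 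The constants $\xi_n,c_n,\eta_n$ grow with $n$, but this is permitted by the definition of relative quasi-convexity and does not affect the conclusion.
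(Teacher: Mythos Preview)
Your proposal is correct and follows essentially the same approach as the paper, whose proof simply instructs the reader to repeat the argument of Proposition~\ref{p:tt} using Lemma~\ref{lem:rh-dY}(i) in place of Lemma~\ref{lem:stab_of_qgeod}. You have carefully filled in the details the paper omits, including the correct observation that the witnessing finite set $Z_n$ must absorb the vertices of a geodesic spelling of $w_n$ so that interior vertices on $w_n^{\pm 1}$-labelled subpaths of $q$ become $\d_{Z_n}$-close to $L_n$.
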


\begin{proof} 

According to Definition~\ref{def:rel_qc_sbgp}, there exist finite subsets $Z_H$ and $Z_K$ of $G$ and a constant $\eta\geq 0$ such that the following holds. If $r$ is a geodesic segment in $S$ with $r_-,r_+ \in H$, then for any vertex $z$ of $r$ there is $h \in H$ such that $\d_{Z_H}(z,h) \le \eta$. Also if $r$ is a geodesic segment in $S$ with $r_-,r_+ \in K$, then for any vertex $z$ of $r$ there is $k \in K$ such that $\d_{Z_K}(z,k) \le \eta$. Let $Y$ be the finite subset of $G$ given by Lemma \ref{lem:rh-dY}. Fix geodesics $[1, w_n]$ and $[1, w_n^{-1}]$  in $S$, and whenever we have a path labeled by $w_n^{\pm 1}$ we will use the appropriate translate of one of those fixed geodesics. Let $Z_n=Z_H\cup Z_K\cup Y\cup\{v^{-1}w_n\;|\; v\text{ is a vertex on } [1, w_n]\}\cup\{v\;|\; v \text{ is a vertex on } [1, w_n^{-1}]\} $; we will show that $L_n$ satisfies the definition of relative quasi-convexity with respect to the finite subset $Z_n$ asymptotically almost surely.

By Corollary~\ref{cor:rh_trans} there exists a loxodromic element $h \in G$ which is transverse to $H$ and $K$.  Set $V=(H\cup K)\setminus\{1\}$; note that $V$ is quasi-convex in $S$, by Lemma~\ref{lem:union_of_qc}, and $h$ is transverse to $V$, by Lemma~\ref{lem:transuc}. According to Theorem~\ref{thm:rh_acy}, $h$ is a WPD element since the action of $G$ on $S$ is acylindrical. By Theorem~\ref{thm:rw}, we can assume that if $W$ is a word in the alphabet $V\cup\{w_n^{\pm 1}\}$ with no consecutive letters belonging to $V$, then any path in $S$ labeled by $W$ is an $(8, c_n)$-quasi-geodesic.

 Let $p$ be a geodesic path in $S$ with $p_-, p_+\in L_n$. Applying a translation by an element of $L_n$, we can assume that $p$ starts at $1$ and terminates at some $g \in L_n$. Let $W$ be a word as in \eqref{eq:word_for_g}, representing $g$ in $G$ and considered as a reduced word over the alphabet $V \cup \{w_n^{\pm 1}\}$. Let $q$ be the path in $S$ from $1$ to $g$ which is labeled by $W$. Then $q$ is $(8, c_n)$-quasi-geodesic, so, by Lemma~\ref{lem:rh-dY}, there is a constant $\varkappa_n \ge 0$ such that for every point $x$ of $p$ there is a point $y$ of $q$ such that $\d_{Z_n}(x, y)\le\d_Y(x,y) \le \varkappa_n$. By construction, the path $q$ is a concatenation of geodesic subpaths labeled by $w_n^{\pm 1}$, $k_i$  or $h_j$, and $y$ must belong to one of them. 

Suppose, first, that $y$ lies on a subpath $r$, of $q$, labeled by $k_i$, for some $i=0,\dots,m$:
\[r=[k_0 w_n^{-1} h_1 {w_n} k_1 \dots w_n^{-1} h_i {w_n}, k_0 w_n^{-1} h_1 {w_n} k_1 \dots w_n^{-1} h_i {w_n}k_{i}]=u([1,k_i]),\]
where $u=k_0 w_n^{-1} h_1 {w_n} k_1 \dots w_n^{-1} h_i {w_n} \in L_n$. Then $u^{-1}y\in [1,k_i]$, so by the relative quasi-convexity of $K$ we have that $\d_{Z_n}(u^{-1}y, K)\le \d_{Z_K}(u^{-1}y, K)\le \eta$. Hence
\[\d_{Z_n}(y, L_n)\le \d_{Z_n}(y, uK)=\d_{Z_n}(u^{-1}y, K)\le \eta.\]

If $y$ lies on a subpath labeled by $w_n$ then $y=uv$ where $v$ is a vertex on $[1, w_n]$ and $u$ is an element of the form $k_0 w_n^{-1} h_1 {w_n} k_1...w_n^{-1} h_i$. In this case, $yv^{-1}w_n=uw_n\in L_n$ and $v^{-1}w_n\in Z_n$, by the definition of $Z_n$, so 
\[
\d_{Z_n}(y, L_n)\leq \d_{Z_n}(y, yv^{-1}w_n)\le 1.
\]

If $y$ belongs to a subpath of $q$ labeled by $w_n^{-1}$, then $y=uv$ where $v$ is a vertex on $[1, w_n^{-1}]$ and $u$ is an element of the form $k_0 w_n^{-1} h_1 {w_n} k_1...w_n^{-1} h_iw_nk_i$. Then $u\in L_n$ and $v\in Z_n$, so
\[
\d_{Z_n}(y, L_n)\leq \d_{Z_n}(y, u)=\d_{Z_n}(uv, u)\le 1.
\]

The final possibility is that $y$ belongs to a geodesic subpath $r$, of $q$, labeled by $h_j$, for some $j=1,\dots,m$, so that
\[r=u([1,h_j]), \text{ where } u=k_0 w_n^{-1} h_1 {w_n} k_1\dots k_{j-1}w_n^{-1}.\]  

Then $u^{-1}y\in [1,h_j]$, so $\d_{Z_n}( u^{-1}y, H)\le \d_{Z_H}(u^{-1}y, H)\le \eta$, by the relative quasi-convexity of $H$. Note that $w_n\in Z_n$ by construction, so for all $h\in H$, $\d_{Z_n}(h, hw_n)\le 1$. Hence we have
\[\d_{Z_n}(y, L_n)\le \d_{Z_n}(y, uHw_n)=\d_{Z_n}(u^{-1}y, Hw_n)\le \eta+1.\]
Thus we can conclude that $\d_{Z_n}(x, L_n)\le \varkappa_n+\eta+1$, which implies that $L_n$ is relatively quasi-convex.

The proofs of parts (b) and (c) are identical to the proofs of the corresponding parts of Proposition~\ref{p:tt}.
\end{proof}

Theorem~\ref{Thm:main2} from the Introduction follows from the following statement, whose proof is identical to the proof of Theorem~\ref{Thm:CC}, with Proposition~\ref{p:tt}  replaced by Proposition~\ref{prop:relqctt}.

\begin{thm}\label{thm:relhyp_CC}
Let $G$ be a non-elementary relatively hyperbolic group with $E(G)=\{1\}$. Let  $\mu\in \Prob(G)$ be a permissible probability measure on $G$ with generating support. Then the action of $G$ on the closure of the space of infinite index relatively quasi-convex subgroups in $\Sub(G)$ is topologically $\mu$-mixing.
\end{thm}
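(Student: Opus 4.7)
The plan is to follow the template of the proof of Theorem~\ref{Thm:CC} almost verbatim, with Proposition~\ref{prop:relqctt} substituted for Proposition~\ref{p:tt}. Using the random-walk reformulation of $\mu$-mixing from Section~\ref{sec:rw}, it suffices to prove that for any pair of non-empty open subsets $U, V$ of the closure of the set of infinite index relatively quasi-convex subgroups of $G$ in $\Sub(G)$, a random walk $w_n$ driven by $\mu$ satisfies $w_n(U) \cap V \ne \emptyset$ asymptotically almost surely.

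First I would pick representative subgroups: by density, there exist infinite index relatively quasi-convex subgroups $K \in U$ and $H \in V$. Since the topology on $\Sub(G)$ is induced by the product topology on $2^G$, we can find a finite subset $\mathcal{F} \subseteq G$ such that any subgroup $J \leqslant G$ with $J \cap \mathcal{F} = K \cap \mathcal{F}$ lies in $U$, and any $J$ with $J \cap \mathcal{F} = H \cap \mathcal{F}$ lies in $V$.

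Next I would apply Proposition~\ref{prop:relqctt} to $H$, $K$, $\mathcal{F}$, and $\mu$. The random subgroup $L_n = \langle w_n^{-1} H w_n, K\rangle$ is, asymptotically almost surely, a relatively quasi-convex subgroup of infinite index in $G$ satisfying $L_n \cap \mathcal{F} = K \cap \mathcal{F}$ and $w_n L_n w_n^{-1} \cap \mathcal{F} = H \cap \mathcal{F}$. This places $L_n \in U$ and $w_n L_n w_n^{-1} \in V$ asymptotically almost surely, whence $w_n(U) \cap V \ne \emptyset$ asymptotically almost surely, as required.

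The real content of the argument is packaged inside Proposition~\ref{prop:relqctt}, and that is where the main obstacle lies. Its proof rests on the existence of a loxodromic element of $G$ transverse to both $H$ and $K$ (Corollary~\ref{cor:rh_trans}), which in turn leans on Proposition~\ref{prop:rel_qc-rwpd}: every loxodromic element is $H$-apt for any relatively quasi-convex $H$. One must also re-run the random-walk word-reduction and quasi-geodesicity argument from Proposition~\ref{p:tt}(a), with the tame-quasi-geodesic stability estimate Lemma~\ref{lem:rh-dY}(i) replacing the usual hyperbolic stability of quasi-geodesics; in a relative Cayley graph, ordinary quasi-geodesic stability fails and must be replaced by stability measured in the extended word metric $\d_Y$ applied to tame quasi-geodesics. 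This substitution is the main subtlety separating the relatively hyperbolic case from the convex cocompact case treated in Theorem~\ref{Thm:CC}, but once Proposition~\ref{prop:relqctt} is in hand the passage to $\mu$-mixing is completely formal.
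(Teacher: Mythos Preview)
Your proposal is correct and mirrors the paper's own proof exactly: the paper states that the argument is identical to that of Theorem~\ref{Thm:CC}, with Proposition~\ref{prop:relqctt} replacing Proposition~\ref{p:tt}, and your write-up spells out precisely that substitution. One small quibble of phrasing: ordinary quasi-geodesic stability does hold in the relative Cayley graph (it is hyperbolic), but it is insufficient because relative quasi-convexity requires closeness in the finite-alphabet metric $\d_Y$ rather than the graph metric, which is why Lemma~\ref{lem:rh-dY}(i) must replace Lemma~\ref{lem:stab_of_qgeod}.
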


Corollary~\ref{Thm:main2} from the Introduction follows from Theorem~\ref{thm:relhyp_CC} and the following fact.

\begin{lem}\label{lem:inf_many_ends->rh} Let $G$ be a finitely generated group with infinitely many ends. Then $G$ is non-elementary relatively hyperbolic and every finitely generated subgroup of $G$ is relatively quasi-convex.
\end{lem}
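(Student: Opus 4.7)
The plan is to invoke the classical structure theory of ends of groups. By Stallings' theorem on ends, a finitely generated group with more than one end admits a non-trivial splitting over a finite subgroup; iterating and applying Dunwoody's accessibility theorem, $G$ can be presented as the fundamental group of a finite graph of groups $\mathcal{G}$ with finite edge groups and vertex groups that are each either finite or one-ended. Let $\mathcal{P}=\{P_1,\dots,P_n\}$ be a set of conjugacy-class representatives of the infinite (hence one-ended) vertex groups of $\mathcal{G}$.

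The first step is to show that $G$ is hyperbolic relative to $\mathcal{P}$. This is a well-known consequence of Bowditch's treatment of relative hyperbolicity in the presence of splittings over finite subgroups (it also follows from the combination results in \cite{Osi06}). To check non-elementarity, we observe that a group with infinitely many ends is never virtually cyclic, since virtually cyclic groups have at most two ends; and each $P_i$ is a proper subgroup of $G$ because $\mathcal{G}$ is a non-trivial splitting with finite edge groups. The degenerate case $n=0$ corresponds to $G$ being virtually free, in which case $G$ is a non-elementary hyperbolic group with empty peripheral family.

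For local relative quasi-convexity, let $H \leqslant G$ be finitely generated; we may assume $H$ is infinite (finite subgroups are trivially relatively quasi-convex). The $H$-action on the Bass--Serre tree $T$ associated with $\mathcal{G}$ has finite edge stabilizers and vertex stabilizers of the form $H \cap gP_ig^{-1}$ or $H \cap g F g^{-1}$ for finite vertex groups $F$ of $\mathcal{G}$. Since $H$ is finitely generated, its minimal invariant subtree $T_H \subseteq T$ has a finite quotient $H\backslash T_H$, so $H$ itself is the fundamental group of a finite graph of groups with finite edge groups and vertex groups of the types above; in particular only finitely many $H$-conjugacy classes of infinite vertex stabilizers appear. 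Each such infinite vertex stabilizer is tautologically relatively quasi-convex in $G$, being a subgroup of a conjugate of a peripheral subgroup. A combination theorem for relatively quasi-convex subgroups along finite edge groups (for example Bigdely--Wise or Martinez-Pedroza) then yields that $H$ is relatively quasi-convex in $G$.

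The main obstacle will be invoking the correct form of the combination theorem and checking its (mild) hypotheses in our setting, since the literature contains several closely related but formally distinct statements. A self-contained alternative would be to argue directly from Definition~\ref{def:rel_qc_sbgp}: trace a relative geodesic joining two elements of $H$ through the action on $T$, and use the finiteness of edge stabilizers together with the Bass--Serre decomposition of $H$ to show that every intermediate vertex lies within bounded distance, in the extended word metric $\d_Z$, of a point of $H$.
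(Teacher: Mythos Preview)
Your overall strategy is sound and close in spirit to the paper's, but there is one genuine issue: you invoke Dunwoody's accessibility theorem to obtain a terminal decomposition with one-ended vertex groups, yet accessibility is only known for finitely \emph{presented} groups, and Dunwoody himself constructed finitely generated inaccessible groups (which necessarily have infinitely many ends). Since the lemma assumes only that $G$ is finitely generated, this step does not go through as stated. Fortunately the one-endedness of the vertex groups is never actually used in the rest of your argument, so the fix is simply to drop accessibility and work with a single Stallings splitting $G\cong A*_C B$ or $G\cong A*_{C^t=D}$ over a finite subgroup $C$.

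This is exactly what the paper does. It takes the Bass--Serre tree $T$ of one such splitting, observes that $T$ is a fine hyperbolic graph with finite edge stabilizers and finitely many edge orbits, and concludes directly from Bowditch's definition (Definition~\ref{defn:relhyp}) that $G$ is hyperbolic relative to $\{A,B\}$ or $\{A\}$; non-elementarity is immediate. For a finitely generated $H\leqslant G$, the paper cites \cite[Proposition~I.4.13]{DD} to obtain an $H$-invariant convex subtree $T_1\subseteq T$ on which $H$ acts cocompactly---your minimal-subtree step---and then appeals to Mart\'{\i}nez-Pedroza--Wise \cite{MPW} in one stroke, rather than phrasing it as a combination theorem along finite edge groups. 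The net effect is the same as your proposal once the accessibility detour is removed, but the paper's route is shorter and avoids the delicate issue above.
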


\begin{proof} By a famous result of Stallings \cite{Sta}, $G$ decomposes either as a non-trivial amalgamated free product $A*_C B$, where $A,B \leqslant G$ and $C=A \cap B$ is finite, or as an HNN-extension $A*_{C^t=D}$, where $A \leqslant G$ and $C, D$ are isomorphic finite subgroups of $A$. Let $T$ be the Bass-Serre tree corresponding to this splitting of $G$. Then $G$ acts on $T$ by isometries, without edge inversions, with finite edge stabilisers and with finitely many orbits of vertices and edges. 

Evidently, $T$ is a fine hyperbolic graph. Therefore $G$ is hyperbolic relative to any collection of representatives of vertex stabilizers (e.g., $\{A,B\}$ in the case when $G \cong A*_C B$ or $\{A\}$ in the case when $G \cong A*_{C^t=B}$) by Definition \ref{defn:relhyp}. 
Note that $G$ is non-elementary, as it is not virtually cyclic and the vertex stabilizers are proper subgroups of $G$. 

Let $H$ be a finitely generated subgroup of $G$. Then there is a convex subtree $T_1$ of $T$ which is $H$-invariant and on which $H$ acts cocompactly (see \cite[Proposition~I.4.13]{DD}). It now follows from the work of Mart\'{\i}nez-Pedroza and Wise \cite{MPW} that $H$ is relatively quasi-convex in $G$.
\end{proof}

\end{document}